\theoremstyle{plain}
\newtheorem*{thma}{Theorem A}
\newtheorem*{thmb}{Theorem B}
\newtheorem*{thmc}{Theorem C}
\newtheorem{thm}{Theorem}[section]
\newtheorem{lem}[thm]{Lemma}
\newtheorem{prop}[thm]{Proposition}
\newtheorem{cor}[thm]{Corollary}
\theoremstyle{definition}
\theoremstyle{remark}
\newtheorem{rem}[thm]{Remark}
\newtheorem*{rem*}{Remark}
\newcommand{\DS}{\displaystyle}
\newcommand{\cas}{\overset{as}{\rightarrow}}
\def\des{\mathop{\hbox{$\longrightarrow$}}\limits}
\def\oL{\overline{L}}
\def\tL{\tilde{L}}
\def\oh{\overline{h}}
\def\s{\sigma}
\def\bl{\begin{lem}}
\def\el{\end{lem}}
\def\bp{\begin{proof}}
\def\ep{\end{proof}}
\def\beq{\begin{eqnarray*}}
\def\eeq{\end{eqnarray*}}
\def\tet{{\theta}}
\def\tx{\text}
\def\V{\hat{V}}
\def\Z{{\mathbb Z}}        
\def\R{{\mathbb R}}        
\def\<{{\langle}}
\def\>{{\rangle}}
\def\P{{\mathbb P}}        
\def\E{{\mathbb E}}        
\def\1{{\mathbf 1}}        
\def\iy{{\infty}}
\def\grad{\mathsf{grad}}
\def\L{{\mathscr L}}
\def\L2{{\mathscr L}^2_{\rho_\X}}
\def\A{\bar{A}}
\def\b{\bar{b}}
\def\w{\bar{w}}
\def\M{M_\rho}
\def\e{{\epsilon}}
\def\H{{\mathscr H}}
\def\W{{\mathscr W}}
\def\P{{\mathscr P}}
\def\X{{\mathscr X}}
\def\Y{{\mathscr Y}}
\def\Z{{\mathscr Z}}
\def\F{{\mathcal F}}
\def\PPi{{\bar{\Pi}}}
\def\Err{{\mathscr E}}
\def\N{{\mathbb N}}
\def\z{{\mathbf z}}
\def\Prob{{\bf Prob}}
\def\grad{{\rm grad}}
\def\supp{{\rm supp}}
\def\span{{\rm span}}
\def\amax{{\overline{\alpha}}}
\def\amin{{\underline{\alpha}} }
\def\t{t_0}
\def\g{\gamma}
\def\al{\alpha}
\def\be{\beta}
\def\la{\lambda}
\def\De{\Delta}
\def\de{\delta}
\def\ka{\kappa}
\def\ga{\gamma}
\def\ze{\zeta}
\def\eps{\epsilon}
\def\ot{\overline{t}}
\newcommand{\Es}{\mathbb{E}}
\begin{document}

\title[Online Learning as Stochastic Approximation of Regularization Paths]{Online Learning as Stochastic Approximation of Regularization Paths: Optimality and Almost-sure Convergence}
\thanks{The research of P.T. was supported in part by a Leverhulme Prize. The research of Y.Y. was supported in part by National Basic Research Program of China (973 Program 2011CB809105, 2012CB825501), NSFC grant 61071157, NSF grant 0325113, and a professorship in the Hundred Talents Program at Peking University. Part of this work was done while both authors visited Toyota Technological Institute at Chicago in the spring of 2005, written in an earlier version in \cite{Yao06,TarYao08}.}

\author{Pierre Tarr\`es}
\address{Pierre Tarr\`es, Mathematical Institute, University of Oxford, 24-29 St Giles', Oxford OX1 3LB,  U.K.}
\email{tarres@maths.ox.ac.uk}

\author{Yuan Yao}
\address{Yuan Yao, School of Mathematical Sciences, Peking University, Beijing, China 100871.}
\email{yuany@math.pku.edu.cn}

\keywords{Online Learning, Stochastic Approximations, Regularization Path, Reproducing Kernel Hilbert Space}

\date{January 17, 2013.}

\subjclass[2000]{62L20, 68Q32, 68T05}

\begin{abstract}
In this paper, an online learning algorithm is proposed as sequential stochastic approximation of a regularization path converging to the regression function in reproducing kernel Hilbert spaces (RKHSs). We show that it is possible to produce the best known  strong (RKHS norm) convergence rate  of batch learning, through a careful choice of the gain or step size sequences, depending on regularity assumptions on the regression function. The corresponding weak (mean square distance) convergence rate  is optimal in the sense that it reaches the minimax and individual lower rates in the literature. In both cases we deduce almost sure convergence, using Bernstein-type inequalities for martingales in Hilbert spaces. 

To achieve this we develop a bias-variance decomposition similar to the batch learning setting; the bias consists in the approximation and drift errors along the regularization path,  which display the same rates of convergence, and the variance arises from the sample error analysed as a reverse martingale difference sequence. 
The rates above are obtained by an optimal trade-off between the bias and the variance.  
\end{abstract}

\maketitle

\bigskip




\section{Introduction}


Consider the following classical problem of learning from examples: given a sequence of i.i.d. random examples
$(z_t=(x_t,y_t))_{t\in \N}$ drawn from a probability measure $\rho$ on $\X\times \Y$, one seeks to approximate
the \emph{regression function}
$$f_\rho(x):=\int_\Y y d \rho_{\Y|x},$$ \emph{i.e.} the conditional expectation of $y$ given $x$. Recall that $f_\rho$ minimizes the following mean square error
\begin{equation}  \label{eq:ls}
\Err(f) = \int_{\X\times \Y} (f(x) - y)^2 d \rho.
\end{equation}

The error of the approximation $f$ of $f_\rho$ is estimated for instance through the norm $\|f -
f_\rho\|_\infty$ or $\|f - f_\rho\|_\rho$, where
$$\|f\|_\rho = \|f\|_{\L2} =\left\{\int_X |f(x)|^2 d
\rho_{\X}\right\}^{1/2},$$ ($\rho_\X$ being the marginal distribution of $\rho$ on
$X$), or through other norms in Hilbert spaces which, as we shall see later, may capture different regularity features of this approximation. 

An {\it online learning algorithm} aims at obtaining 
this approximation of the regression function recursively, using at each time step the new example $z_t=(x_t,y_t)$ to update the current hypothesis $f_{t-1}$ (approximating
$f_\rho$) to $f_t$. In other words, $f_t=T_t(f_{t-1},z_t)$
for some map $T_t:\H\times \X\times\Y\to \H$, where $\H$ is a Hilbert space of functions
from $\X$ to $\Y$, see for example \cite{SmaYao06}. 

On the contrary,  {\it batch learning} algorithms process a sample set  
given once and for all at some fixed time $m$, \emph{i.e.} ${\bf z}=\{(x_i,y_i)\}_{i=1}^m$. 
The classical bias-variance paradigm is that of a trade-off between the requirement to fit the data, \emph{i.e.} to provide a small
empirical error 
$$\hat{\Err}_\z(f):=\frac{1}{m}\sum_{i=1}^m(f(x_i)-y_i)^2,$$
and the size of the space in which $f$ can take place, in order to limit the impact of the noise created by the 
data. For instance, a Tikhonov regularization (or Ridge Regression) procedure as in \cite{EPP00} yields, given
 $\la>0$, 
$$f_{{\bf z},\la}:=\arg\min_{f\in\H}\left\{\frac{1}{m}\sum_{i=1}^m(f(x_i)-y_i)^2
+\la \|f\|_\H^2\right\}.$$
For more background on regularization of inverse problems, see for instance \cite{EngHanNeu96}. In modern statistics, an $L_1$-type regularization called LASSO \cite{Tib96}, is proposed in pursuit of sparsity of $f_\rho$ with respect to certain basis. 

The regularization parameter $\la$ is chosen as a function of the sample size $m$, and of some prior 
knowledge on the regularity of the function $f_\rho$. In this setting, probabilistic upper bounds of $\|f_m-f_\rho\|_\H$ were obtained for instance in 
[\citeNP{CucSma02}, \citeNP{SmaZho-ShannonIII}]. 

In online learning, the sample size $t$ is changing over time, so that the regularization parameter needs to be updated at each time step, and follows the {\it regularization path} defined as follows. 
Let, for all $\la>0$, $f_\la$ be the solution of the regularized least 
square problem
\begin{equation}  \label{eq:reg}
f_\la = \arg \min_{f\in \H} \Err(f) + \la \|f\|_\H^2.
\end{equation}
Depending on assumptions on the Hilbert space $\H$ and on the regularity of $f_\rho$, $f_\la$ converges to $f_\rho$ in $\L2$ or $\H$-norm when $\la \to 0$. The map 
\begin{eqnarray*}
f_.: \R_+  &\longrightarrow &\H\\
\la  &\longmapsto &f_\la
\end{eqnarray*}
is called \emph{regularization path} of $f_\rho$ in $\H$.

Regularization paths gain rising attention from statistics, in particular in the LASSO case \cite{Efron04}, where they are piecewise linear with respect to the parameter, which enables one to track the entire path with nearly the same amount of computational cost as a single fixed regularization. This property generalizes to the case where the loss and the penalty are respectively piecewise quadratic and linear \cite{RosZhu07}; note that this however does not include Tikhonov regularization. 

Our purpose in this paper is to iteratively define an ``online" sequence of functions $(f_t)_{t\in\N}\in \H$, which will provide a stochastic approximation of the Tikhonov regularization path $(f_{\la_t})_{t\in\N}\in \H$. With an adequate choice of the regularization parameters $\la_t\to 0$ based on a bias-variance trade-off, we show such a sequential stochastic approximation  to be \emph{optimal} in the sense that it reaches minimax and individual lower bound rates of convergence.   

Our algorithms can be regarded as stochastic gradient descent algorithms to solve (\ref{eq:reg}) with time varying regularization parameter $\la_t$, an extension from early works \cite{SmaYao06,Yao10} which investigate the convergence  $f_t \to f_\la$ for  fixed $\la_t = \la>0$. In that case a weak probabilistic upper bound for $\|f_t - f_\la\|$ was first proposed in \cite{SmaYao06}, based on Markov's inequality. Improved upper bounds were later obtained in \cite{Yao10}, leading in some cases to the same rate of convergence of $(f_t)_{t\in \N}$ to $f_\la$ as in batch learning given $t$ examples. 

However, as we shall see in this  paper, time-varying $\la_t$  was not addressed so far and leads to a more complicated bias-variance decomposition, whose heuristics is related to the existence of a phase transition in the convergence rate in stochastic approximation. We refer the reader to  [\citeNP{Duflo96}; \citeNP{KusYin03}] for background on stochastic algorithms.

As in previous studies, we choose in this paper the Hilbert space $\H$ to be a reproducing kernel Hilbert space (RKHS) $\H_K$ for some kernel $K$. RKHS enables one to analyze nonparametric regressions in a coordinate-free manner, and the gradient descent method then takes an especially simple form [e.g. \citeNP{KivSmoWil04}].
Moreover, RKHS provides a unified framework in several important settings, e.g.

(i) generalized smooth spline functions in Sobelev spaces \cite{Wahba90}, 

(ii) real analytic functions with bounded bandwidth \cite{Daubechies92}
and their generalizations \cite{SmaZho-ShannonI}, 

(iii) gaussian processes [\citeNP{Loeve48}; \citeNP{Parzen61}]. 

In fact, any Hilbert space of functions on $\X$ with a bounded evaluation functional is a RKHS \cite{Wahba90}. By choosing suitable kernels, $\H_K$ can be used to approximate any function in $\L2$; see for instance \cite{BerTho04} for wider background on RKHS.

Our analysis starts in the setting of a general Hilbert space $\W$  in Section \ref{sec:general}, with the study of  an iteratively defined sequence, which is a stochastic approximation of the solution of some linear equation. This study will be specialized in later sections to the cases $\W=\H_K\tx{ or }\L2$ in order to show the main results of the paper. Two structural decomposition theorems are introduced in that Section \ref{sec:general},  the \emph{reversed martingale
decomposition} and the \emph{martingale decomposition}, and play an important role in the proof of the main results, the former being suitable for strong convergence in $\H_K$ and the latter for weak convergence in $\L2$. 

Both decompositions lead to the breakdown of the total error $f_t - f_\rho$ into four parts: the \emph{initial error} caused by the initial guess $f_0$, the \emph{sample error} as a reverse martingale difference sequence, the \emph{approximation error} $f_{\la_t}-f_\rho$, and the \emph{drift error} along the regularization path $(f_{\la_t})$ caused by time-varying $\la_t$. 

By a suitable choice of step sizes, the initial error won't affect the convergence rates. Now a key observation is that the drift error, which does not appear in previous fixed regularization settings with $\la_t = \la$, has the same order as the approximation error. Bernstein-type inequalities for martingales in Hilbert spaces are then used to bound the sample error. Therefore we have a similar bias-variance decomposition in online learning as in batch learning, with the bias being the approximation and the drift errors, and the variance being the sample error. It is then possible to optimize in order to yield the same optimal rates in online learning as in batch learning.


The main theorems in this paper provide some probabilistic upper bounds for the convergence of $(f_t)_{t\in \N}$ to $f_\rho$, in $\H_K$ or $\L2$,
under the assumption that $f_\rho\in \H_K$ has additional regularity. The convergence rate in $\L2$ is optimal in the sense that it reaches the minimax and individual lower rate. The convergence rate in $\H_K$ meets the same best rates as in batch learning \cite{SmaZho-ShannonIII}. Both upper bounds depend on a logarithmic power $\al>0$ of the confidence threshold $\de$ (i.e. $O(\log^\alpha 1/\de)$). They imply by Borel-Cantelli Lemma the almost sure convergence of $f_t$ to $f_\rho$ in $\H_K$ and $\L2$. Such a theorem improves on our early result in 2006 (see \cite{Yao06}), where in mean square distance the upper bounds depended polynomially on the confidence (i.e. $O(\delta^{-\alpha})$), and whence solves the open problem raised therein.


The  paper is organized as follows. Section \ref{sec:main} collects the main results. Section \ref{sec:general} studies
stochastic approximations of regularization paths for linear operator equations in general Hilbert spaces, where the key martingale and reverse martingale decompositions are presented. Section \ref{sec:drifts} collects some estimates on the drift along the regularization path,
$\|f_{\la} - f_{\mu}\|$ ($\la,\mu>0$), which are needed for the study of the bias, i.e. the approximation and drift errors.
Sections \ref{sec:hk} and \ref{sec:l2} respectively yield upper bounds for convergence in $\H_K$ and $\L2$. Appendix A derives
a probabilistic inequality from the Pinelis-Bernstein inequality for martingales in Hilbert spaces, which is used to derive the probabilistic upper bounds in this paper. Appendix B collects some preliminary upper bounds used in the paper. Appendix C gives proofs of some results in Section \ref{sec:thmconv}.

\section{Main Results} \label{sec:main}

\subsection{Notations and Assumptions}
Let $\X\subseteq \R^n$ be closed, $\Y=\R$ and $\Z=\X\times \Y$.
Let $\rho$ be a probability measure on $\Z$, $\rho_{\X}$ be the induced marginal probability measure on $\X$,
and let $\rho_{\Y|x}$ be the conditional probability measure on $\Y$ with respect to $x\in
X$. Define $f_\rho:\X\to \Y$ by $f_\rho (x) = \int_\Y y d \rho_{\Y|x}$,
the \emph{regression function of $\rho$}. In the sequel, we let $\E[\cdot]$ be the expectation with respect to $\rho$.

Let $\L2$ be the Hilbert space of
square integrable functions with respect to $\rho_\X$. In the sequel $\|\ \|_\rho$
denotes the norm in $\L2$.

Let $K:\X\times \X\to \R$ be a \emph{Mercer kernel}, i.e. a
continuous\footnote{In computer science literature, one often bears in mind some implicit feature map $\Phi:\X\to\H$ which takes an input vector $x$ to a high (or infinite) dimensional feature vector, say an element of a Hilbert space $\H$, and then one considers explicitly the inner product $K(x,x')=\<\Phi(x),\Phi(x')\>$ as the kernel. In this construction, the continuity of $K$ is equivalent to continuity of the feature map $\Phi$.} symmetric real function which is \emph{positive
semi-definite} in the sense that $\sum_{i,j=1}^m c_i c_j K(x_i,
x_j)\geq 0$ for any $m\in \N$ and any choice of $x_i\in X$ and
$c_i \in \R$ ($i=1,\ldots,m$). A Mercer kernel $K$ induces
a function $K_x : \X\to \R$ ($x\in \X$) defined by $ K_x(x^\prime) = K(x,x^\prime)$.
Let $\H_K$ be the \emph{reproducing kernel Hilbert space} (RKHS)
associated with a Mercer kernel $K$, i.e. the completion of the $\span\{K_x: x\in \X\}$
with respect to the inner product, defined as the linear extension of the
bilinear form $\<K_x, K_{x^\prime}\>_K = K(x,x^\prime)$ ($x,x^\prime \in \X$). The norm
of $\H_K$ is denoted by $\|\ \|_K$. The most important property of RKHS is the \emph{reproducing property}:
for all $f\in \H_K$ and $x\in X$, $f(x)=\<f,K_x\>_K$.

Throughout this paper, assume that

\noindent {\bf Finiteness Condition.}
\noindent (A) There exists a constant $\ka\geq 0$ such that
\begin{equation*}
\kappa:=\sup_{x\in \X} \sqrt{K(x,x)} < \infty.
\end{equation*}
\noindent (B) There exists a constant $\M \geq 0$ such that
\[ \supp(\rho)\subseteq \X\times [-\M, \M]. \]

Define the linear map
$$L_K:\L2\to \H_K$$ 
by the following integral transform
$$L_K(f)(x):= \int_X K(x,t) f(t) d\rho_X(t).$$ 
It is well-known that $L_K$ is well-defined, and that composition with the inclusion $\H_K \hookrightarrow\L2$ yields a compact positive self-adjoint operator on $\L2$ [e.g. \citeNP{HalSun78}, \citeNP{CucSma02}]. The restriction 
$L_K|_{\H_K}:\H_K \to \H_K$ is the \emph{covariance operator} of $\rho_\X$ in $\H_K$: by the reproducing property, 
$$L_K|_{\H_K} = \E [\<\cdot,K_x\>K_x].$$ Abusing notation, we will denote the three operators by $L_K$ in the sequel.

Note that, by Cauchy-Schwarz inequality, $\|L_Kf\|_{\iy}\le\ka^2\|f\|_{\L2},$ so that 
\begin{equation}
\label{ub-lk}
\|L_K\|_{\L2\to\L2}\le\ka^2.
\end{equation}

The compactness of $L_K:\L2\to \L2$ implies the existence of an orthonormal eigensystem $(\mu_\al,\phi_\al)_{\al\in \N}$ in $\L2$. 
Recall that (see [\citeNP{CucSma02}] for instance)
$$\sum_{\al\in\N}\mu_\al=\int_\X K(x,x)d\rho_\X(x)\le\ka^2.$$

We assume in this paper that all eigenvalues $\mu_\al$ are positive. We can define, for all $r>0$, 
\begin{equation} \label{eq:LKr}
\begin{array}{rcl}
L_K^r: & \L2 & \to \L2 \\
& \DS \sum_{\al\in\N} a_\al \phi_\al & \DS \mapsto \sum_{\al\in\N} a_\al \mu_\al^r \phi_\al;
\end{array}
\end{equation}
$L_K^{r}$ can be regarded as a low-pass filter, and $\|L_K^r\|=\max_{\al\in\N} \mu_\al^r=\|L_K\|^r$. Note that $L_K^{1/2}:\L2\to \H_K$ is an isometrical isomorphism of Hilbert spaces.  Hence the eigenfunctions $(\phi_\al)_{\al\in\N}$ are orthogonal both in $\L2$ and $\H_K$. 

For all $f\in\L2$ and $r>0$, we write $L_K^{-r} f \in \L2$ when $f$ lies in the image of the mapping $L_K^r:\L2\to \L2$. Note that, if $r\geq 1/2$, then this implies $f\in \H_K$ because of the isometry $L_K^{1/2}$ between $\L2$ and $\H_K$.

For all $\la>0$ and $r\in\R\setminus\{0\}$, we can similarly define $(L_K+\la I)^r:\L2\to\L2$, which is a bijection, since 
 $\sum_{\al\in\N} a_\al^2<\iy$ is equivalent to $\sum_{\al\in\N} a_\al^2(\la+\mu_\al)^r<\iy$, using $\mu_\al\to_{\al\to\iy}0$.

It can be shown [e.g. \citeNP{CucSma02}] that for any $\la\in \R_+$, the solution of (\ref{eq:reg}) is
\begin{equation}
\label{expfla}
f_{\la} = (L_K + \la I)^{-1} L_K f_\rho \in \H_K.
\end{equation}

In this paper, by $B_1,C_1,D_1, B_2,C_2,D_2,\ldots$, we denote various constants, which are defined ``locally'' in the sense 
that the same notations appeared in different sections has different meanings. 

\subsection{Stochastic Gradient Algorithms}
Let $\F=(\F_t)_{t\in\N_0}\in \X\times \Y$ be the filtration
$\F_t=\sigma\{(x_i,y_i)~:~1\le i\le t\}$. In the sequel denote by $\E_t=\E[\cdot|\F_t]$, the conditional expectation w.r.t. $\F_t$. Consider the following $\F_t$-adapted process $(f_t)_{t\in \N}$ taking values in $\H_K$,
\begin{equation}\label{eq:ft}
f_t = f_{t-1} - \gamma_t [(f_{t-1}(x_t)-y_t)K_{x_t} +\lambda_t f_{t-1}], \ \ \ \ \ \mbox{for some fixed $f_0\in \H_K$, e.g. $f_0:=0$}
\end{equation}
where \\
(I) for each $t$, $(x_t,y_t)$ is independent and identically distributed (i.i.d.) according to $\rho$; \\
(II) the {\it gain (step size) sequence}  $(\gamma_t)_{t\in\N}$ and {\it regularization sequence} $(\lambda_t)_{t\in\N}$ are taking values in $\R_+:=(0,\infty)$, and converging to $0$ as $t$ goes to infinity. 

\begin{rem}
\label{rem:ft}
The computational cost of this algorithm typically is $O(t^2)$. As each step $t$, the main computational cost is due to the evaluation $f_{t-1}(x_t)$ which
needs to access all $K_{x_i}$ ($1\leq i\leq t$) in $O(t)$ steps. Thus the total cost is of $O(t^2)$ at time $t$. In the cases that one can store and access
the values $f_t(x)$ for all $x$, e.g. on a grid of $\X$, the computational cost is merely linear $O(t)$ at the requirement of large memory and fast memory
access.
\end{rem}

By reproducing property, we can see that the gradient map of
$$V_z(f)= \frac{1}{2}[(f(x)-y)^2 + \la \|f\|_K^2], \ \ \ \ z=(x,y)\in \Z$$
is given by $\grad V_z(f)=(f(x)-y)K_{x} +\lambda f$ [e.g. \citeNP{SmaYao06}], as a random variable depending on $z$.
Since the expectation $\E[V_z(f)]=2(\Err(f)+\la\|f\|_K^2)$, algorithm (\ref{eq:ft}) can thus be regarded as
stochastic approximations of gradient descent method to solve (\ref{eq:reg}), for each $\la=\la_t$.

\subsection{Main Theorems}
Theorem A provides sufficient conditions for the convergence  of the {\it online learning} sequence $(f_t)_{t\in\N_0}$ in (\ref{eq:ft}) to the regression function $f_\rho$. Theorem B and C explicit the corresponding convergence rates, respectively in $\H_K$ and $\L2$.

\medskip

\begin{thma} [Sufficient conditions for convergence]
Assume $f_\rho\in\H_K$, and let $(f_t)$ be defined by equation (\ref{eq:ft}), with assumptions (I)-(II).   Then
$$\limsup_{t\to\iy}\Es[\|f_t-f_\rho\|^2_K]=0,$$
if the following conditions are satisfied: 

\noindent (A) $\DS \sum_{t\to \infty} \gamma_t \la_t = \infty$.

\noindent (B) $\DS \limsup_{t\to \infty} \sum_{k=1}^n\g_k^2\prod_{i=k+1}^n(1-\g_i\la_i)^2 =0$,

\noindent (C)  $\DS \limsup_{t\to \infty} \sum_{k=1}^n \|f_{\la_k}-f_{\la_{k-1}}\|_K\prod_{i=k+1}^n(1-\g_i\la_i)=0$.
\end{thma}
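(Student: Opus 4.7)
The plan is to decompose $f_t-f_\rho=(f_t-f_{\la_t})+(f_{\la_t}-f_\rho)$. The second summand is the deterministic \emph{approximation error}, which vanishes in $\H_K$-norm because $f_\rho\in\H_K$ and $\la_t\to 0$: explicitly $f_{\la_t}-f_\rho=-\la_t(L_K+\la_t I)^{-1}f_\rho$, and spectral calculus on $L_K$ gives $\|\la_t(L_K+\la_t I)^{-1}f_\rho\|_K\to 0$. The main work is thus to control $e_t:=f_t-f_{\la_t}$ in $L^2(\Omega,\H_K)$.

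Using the reproducing property and the identity $L_Kf_\rho=(L_K+\la_t I)f_{\la_t}$, the conditional expectation of the stochastic gradient satisfies $\Es_{t-1}[\nabla V_{z_t}(f_{t-1})]=(L_K+\la_t I)(f_{t-1}-f_{\la_t})$. Defining the martingale increment
$$\xi_t:=(f_{t-1}(x_t)-y_t)K_{x_t}-L_K(f_{t-1}-f_\rho),\qquad \Es_{t-1}[\xi_t]=0,$$
one obtains the affine recursion
$$e_t=A_t(e_{t-1}+d_t)-\g_t\xi_t,\qquad A_t:=I-\g_t(L_K+\la_t I),\qquad d_t:=f_{\la_{t-1}}-f_{\la_t}.$$
Since the $A_t$ are self-adjoint polynomials in $L_K$ they pairwise commute, and unrolling yields the ``reversed martingale decomposition''
$$e_n=\Bigl(\prod_{t=1}^n A_t\Bigr)e_0+\sum_{k=1}^n\Bigl(\prod_{t=k}^n A_t\Bigr)d_k-\sum_{k=1}^n\g_k\Bigl(\prod_{t=k+1}^n A_t\Bigr)\xi_k$$
into initial, drift, and sample errors.

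Since the spectrum of $L_K$ on $\H_K$ lies in $[0,\kappa^2]$ and $\g_t\to 0$ (the latter being forced by the $k=n$ summand of condition (B)), one has $\|A_t\|_{\H_K\to\H_K}\le 1-\g_t\la_t$ for all large $t$. Applied termwise, the initial error is dominated by $\prod_t(1-\g_t\la_t)\|e_0\|_K$, which tends to $0$ by (A) because $\sum\g_t\la_t=\infty$ forces $\prod(1-\g_t\la_t)\to 0$. The drift error is dominated by $\sum_k\prod_{t=k}^n(1-\g_t\la_t)\|d_k\|_K$, which tends to $0$ directly by (C). For the sample error, the conditional centring of each $\xi_k$ makes the weighted sum orthogonal in $L^2(\Omega,\H_K)$, so
$$\Es\Bigl[\Bigl\|\sum_{k=1}^n\g_k\Bigl(\prod_{t=k+1}^nA_t\Bigr)\xi_k\Bigr\|_K^2\Bigr]\le\sum_{k=1}^n\g_k^2\prod_{t=k+1}^n(1-\g_t\la_t)^2\,\Es[\|\xi_k\|_K^2],$$
which vanishes by (B) provided $\sup_k\Es[\|\xi_k\|_K^2]<\infty$.

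The main obstacle is this last uniform variance estimate. From $\|\xi_k\|_K\le C_0(\|f_{k-1}\|_K+1)$ (a consequence of the reproducing property and $|y|\le\M$) and the uniform boundedness of $\|f_{\la_k}\|_K$, the task reduces to showing uniform boundedness of $v_k:=\Es[\|e_k\|_K^2]$. I would obtain this by a self-referential Gronwall estimate: starting from $v_t=\Es[\|A_t(e_{t-1}+d_t)\|_K^2]+\g_t^2\,\Es[\|\xi_t\|_K^2]$ and combining the elementary inequality $(a+b)^2\le(1+\eta)a^2+(1+1/\eta)b^2$ (with $\eta=\g_t\la_t/2$) with $\Es[\|\xi_t\|_K^2]\le C_1(v_{t-1}+1)$, one arrives at a recursion of the form
$$v_t\le(1-\g_t\la_t+C_1\g_t^2)v_{t-1}+C_2\|d_t\|_K^2/(\g_t\la_t)+C_1\g_t^2.$$
The delicate step is to verify that conditions (A)--(B) force $\g_t/\la_t$ to be suitably small so that the variance inflation $C_1\g_t^2$ is absorbed into the contractive factor $\g_t\la_t$; standard Abel-type manipulations of the sum in (B) yield this. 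Once $\{v_k\}$ is bounded, the sample error estimate closes, and combining with the initial, drift, and approximation error bounds gives $\Es[\|f_n-f_\rho\|_K^2]\to 0$.
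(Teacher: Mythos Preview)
Your overall architecture---split $f_t-f_\rho$ into approximation, initial, drift and sample errors---is correct, but the expansion you write down is \emph{not} the one the paper uses for Theorem~A, and the difference is precisely where your argument develops a gap. You linearize around the \emph{deterministic} operator $A_t=I-\gamma_t(L_K+\lambda_t I)$ and push all the randomness into $\xi_t$; this is what the paper calls the \emph{martingale} decomposition (Theorem~\ref{thm:mart}), not the reversed one. The price is that $\xi_t$ depends on the random iterate $f_{t-1}$, so $\sup_k\Es\|\xi_k\|_K^2<\infty$ requires an a~priori bound on $v_k=\Es\|e_k\|_K^2$, which you try to obtain by a Gronwall loop. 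The crucial claim ``conditions (A)--(B) force $\gamma_t/\lambda_t$ suitably small so that $C_1\gamma_t^2$ is absorbed into $\gamma_t\lambda_t$'' is not justified and is in fact false in general. For instance, take $\gamma_t\lambda_t=1/t$ with $\gamma_t=t^{-2/3}$ except on $E=\{2^j:j\ge1\}$, where $\gamma_t=t^{-1/2}$; one checks that (A) and (B) hold (the sum in (B) is $O(n^{-1/3})$), yet $\gamma_t/\lambda_t=1$ along $E$. Since $C_1$ depends on $\kappa$ and $\M$, the absorption $C_1\gamma_t^2\le c\gamma_t\lambda_t$ cannot hold eventually for all admissible kernels. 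Your ``Abel-type manipulations'' do not produce the needed control; at best one can hope that the bad indices are sparse enough for the product to remain bounded, but that is a different argument and you have not supplied it. The forcing term $\|d_t\|_K^2/(\gamma_t\lambda_t)$ in your recursion is also not obviously summable from (C), which concerns $\sum_k\|d_k\|_K\prod_{i>k}(1-\gamma_i\lambda_i)$.

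The paper avoids this circularity entirely by using the \emph{reversed} martingale decomposition (Theorem~\ref{thm:rmart}): one linearizes around the \emph{random} operator $I-\gamma_t(L_t+\lambda_t I)$ with $L_t=\langle\cdot,K_{x_t}\rangle_K K_{x_t}$, so the sample-error increment becomes $(L_j+\lambda_j I)f_{\lambda_j}-y_jK_{x_j}$, a function of the single sample $z_j$ and the \emph{deterministic} point $f_{\lambda_j}$ on the regularization path. One then has the uniform bound $\Es\|(L_j+\lambda_j I)f_{\lambda_j}-y_jK_{x_j}\|_K^2\le 4\kappa^2\M^2$ directly from $\|f_\lambda\|_\rho\le\M$ (Lemma~\ref{lem:xi}(B)), with no reference to $f_{j-1}$ and hence no Gronwall. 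The random products $\Pi_{j+1}^t$ still satisfy $\|\Pi_{j+1}^t\|\le\prod_{i>j}(1-\gamma_i\lambda_i)$ almost surely, and the cross terms in the squared norm vanish because $z_j$ is independent of $(z_{j+1},\dots,z_t)$; condition (B) then kills the sample error immediately.
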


This theorem will be proved in Section \ref{sec:general}, as a consequence of Theorem \ref{thm:convergence} in the setting of  Hilbert spaces. Assumptions $(B)$ and $(C)$ can be replaced by the stronger (but less technical) assumptions $(B')$ and $(C')$ in Corollary \ref{cor:convergence} that $\g_t/\la_t\to0$ and $\|f_{\la_t} - f_{\la_{t-1}}\|_K/(\la_t \gamma_t)\to0$.

\begin{rem}
Although $\la_t \to 0$, condition (A) puts a restriction that $\ga_t\la_t$ can not drop too fast, in fact this is necessary to ``forget'' the error
caused by the initial guess $f_0$. Condition (B) says that the step size $\ga_t \to 0$, and it has to drop faster than the regularization parameter $\la_t$.
Such a condition is to attenuate the random fluctuation caused by sampling.
Condition (C) implies that the drifts of the regularization path $(f_{\la_t})$ converges to zero, at a speed faster than $\ga_t \la_t$. This condition
says that in the long run, the drifts along the regularization path should decrease fast enough for the algorithm to follow the path. The drifts depend on regularity of $f_\rho$, that the smoother $f_\rho$ is, the faster drifts go down. 
\end{rem}

In the next two theorems (B) and (C) we choose the sequences $(\g_t)_{t\in\N}$ and $(\la_t)_{t\in\N}$ in order to optimize the rates of convergence  in $\H_K$ and $\L2$. This optimization is twofold. 

First, the study of convergence of approximations of ordinary differential equations generically yields a phase transition between a slower rate with ``shadowing'' of mean-field trajectories, and a faster one, normally distributed after renormalization. Even though the picture is more complicated in our case, in particular because the vector $f_t$ is infinite-dimensional, this  justifies here that we choose $\g_t\la_t$ reciprocally linear in $t$. 

Second, optimization over $(\g_t)$ at fixed  $(\g_t\la_t)$ yields a bias-variance trade-off similar to the one observed in statistical ``batch'' learning, which relies on the regularity assumption on the regression function $f_\rho$.

More precisely, let us first recall the phase transition in classical finite-dimensional stochastic approximation, in the rate of convergence towards a stable equilibrium. Naturally, we study the projections of the algorithm on the base of eigenvectors of the linearization of the ordinary differential equation at the equilibrium. Let $(\eta_t)_{t\in\N}$ be one of these projections, and assume for instance that the corresponding eigenvalue is $-1$, so that the stochastic recursion is of the form
$$\eta_{t+1}=\eta_t+\g_t(-\eta_t+\e_{t+1}+r_{t+1}),$$
where $\Es_{t-1}[\e_t]=0$, $(\e_t)$ is bounded, and $(r_t)$ is small. For simplicity we will assume that $r_t=0$ (which corresponds to the special case $\la_t=\la$ is a constant), but the heuristics holds on to the general case where $r_t$ is less than quadratic in all coordinates. Let, for all $t\in\N$, $\be_t:=\prod_{k=1}^t(1-\g_k)$. Then it is easy to show by induction that 
$$\eta_t=\be_t\left[\eta_0+\sum_{j=1}^t\frac{\g_j}{\be_j}\e_j\right].$$

Now suppose for instance that $\g_t\sim c/t$ ($c>0$); then $\be_nn^{c}\des_{n\to\iy}C>0$. Depending on the choice of $c$, $\eta_t$ exhibits the following phase transition at $c=1/2$ in its asymptotic dynamics.  

\begin{itemize}
\item If $c<1/2$ then $\sum(\g_j/\be_j)^2<\iy$, therefore  $\sum_{j=1}^t\g_j\e_j/\be_j$ converges a.s. by Doob's convergence theorem, which implies that $\eta_tt^{c}\des_{t\to\iy}C'$ (where $C'$ is a positive random variable), in other words that $(\eta_t)$ asymptotically ``shadows'' one solution of the ODE $$\frac{dx}{dt}=-cx.$$ 
\item On the contrary, if $c>1/2$, then  $\sum(\g_j/\be_j)^2=\iy$, and by the martingale convergence theorem (see for instance \cite{Williams91}), assuming for instance $\Es_{t-1}[\e_t^2]=D^2>0$ constant, and $\eta_t\sqrt{t}$ converges towards a centered normally distributed random variable with variance $a^2D^2/(2a-1)$, and follows an associated Ornstein-Uhlenbeck process, see \cite{Duflo96} for instance.
\end{itemize} 


Therefore it suffices to choose $c>1/2$ to achieve fast convergence rates. In this paper we will set $c=1$ and choose $\ga_t \la_t \sim1/ t$ to meet the heuristics above. 

The next two theorems present some probabilistic upper bounds which characterize the convergence rates in $\H_K$ and $\L2$, under certain regularity assumptions
on the regression function $f_\rho$. 

Let $\t>0$ and, for all $t\in\N$, 
$$\ot:=t+\t,$$
where $\t$ is large enough which won't affect the speed of convergence.
We assume, in the statement of Theorems B and C, that, for all $t\in\N$, 
$$\ga_t = a\left(\frac{1}{\ot}\right)^{\frac{2r}{2r+1}},\,\,\la_t = \frac{1}{a}\left(\frac{1}{\ot}\right)^{\frac{1}{2r+1)}}.$$

\begin{thmb} [Upper Bounds for $\H_K$-convergence]
Assume $L_K^{-r} f_\rho\in \L2$ for some $r\in (1/2,3/2]$,  $a\ge1$,  and $\t^\tet\geq a\ka^2+1$.
Then, for all $t\in \N$, with probability at least $1-\delta$,
\[ \|f_t - f_\rho \|_K \leq  \frac{C_0}{\ot} + \left( C_1a^{1/2-r}\log\frac{2}{\delta} + C_2a \right)
\left(\frac{1}{\ot}\right)^{\frac{2r-1}{4r+2}},\]
where
\[ C_0:=2 \t^{\frac{4r+3}{4r+2}}  M_\rho,\,\,\, C_1:=\frac{20r-2}{(2r-1)(2r+3)}\| L^{-r}_K f_\rho \|_\rho,\,\,\,
C_2:=\frac{20 (\ka+1)^2M_\rho}{\ka}.\]
\end{thmb}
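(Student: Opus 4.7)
The plan is to apply the reversed martingale decomposition from Section \ref{sec:general}, which writes $f_t-f_\rho$ as a sum of four pieces: (i) an initial error proportional to $\prod_{k=1}^t(1-\g_k\la_k)(f_0-f_{\la_0})$; (ii) the approximation error $f_{\la_t}-f_\rho$; (iii) a path-drift error $\sum_{k=1}^t\prod_{i=k+1}^t(1-\g_i\la_i)(f_{\la_{k-1}}-f_{\la_k})$; and (iv) a reverse-martingale sample error of the form $\sum_{k=1}^t \g_k\prod_{i=k+1}^t(1-\g_i\la_i)\xi_k$, where $\xi_k$ is the centred residual at step $k$. Under the prescribed choice $\g_t\la_t=1/\ot$, the telescoping product $\prod_{k=1}^t(1-\g_k\la_k)$ is of order $\t/\ot$, so (i) decays like $1/\ot$ and feeds the $C_0/\ot$ contribution, which is strictly faster than the target rate and hence benign.

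For the approximation error I would exploit the regularity hypothesis $L_K^{-r}f_\rho\in\L2$ through the spectral representation $f_\la-f_\rho=-\la(L_K+\la I)^{-1}f_\rho$ from (\ref{expfla}). Because $r\in(1/2,3/2]$, the spectral multiplier $\mu\mapsto \la\mu^{r-1/2}/(\mu+\la)$ is uniformly bounded by $\la^{r-1/2}$ on the spectrum of $L_K$, giving $\|f_{\la_t}-f_\rho\|_K\le \la_t^{r-1/2}\|L_K^{-r}f_\rho\|_\rho$. Substituting $\la_t=a^{-1}\ot^{-1/(2r+1)}$ produces the target rate $\ot^{-(2r-1)/(4r+2)}$ with the explicit $a^{1/2-r}\|L_K^{-r}f_\rho\|_\rho$ factor visible in $C_1$. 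For the drift error I would invoke the estimates of $\|f_\la-f_\mu\|_K$ collected in Section \ref{sec:drifts}: under the same regularity each increment $\|f_{\la_k}-f_{\la_{k-1}}\|_K$ is of order $|\la_{k-1}-\la_k|\,\la_{k-1}^{r-3/2}\|L_K^{-r}f_\rho\|_\rho$, and summing against the weights $\prod_{i=k+1}^t(1-\g_i\la_i)\sim \bar k/\ot$ together with the monotonicity of $(\la_k)$ produces a contribution of the same order as the approximation error.

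The main obstacle is the sample error, which I would handle via the Pinelis--Bernstein inequality for $\H_K$-valued martingales derived in Appendix A. The individual differences $\g_k\xi_k\prod_{i=k+1}^t(1-\g_i\la_i)$ are bounded uniformly by a multiple of $\g_k\prod_{i=k+1}^t(1-\g_i\la_i)(\ka M_\rho+\ka^2\|f_{\la_k}\|_K)$, which is the origin of the $(\ka+1)^2 M_\rho/\ka$ factor in $C_2$. The sum of conditional variances splits into a pure noise term controlled by $\ka^2M_\rho^2$ and a regularization-path term controlled by $\|f_{\la_k}-f_\rho\|^2\le \la_k^{2r-1}\|L_K^{-r}f_\rho\|_\rho^2$; this second piece is what carries the $\|L_K^{-r}f_\rho\|_\rho$ factor through to $C_1$. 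A direct computation using $\g_k\la_k=1/\bar k$ yields
$$\sum_{k=1}^t \g_k^2\prod_{i=k+1}^t(1-\g_i\la_i)^2 \;\sim\; a^2\,\ot^{-(2r-1)/(2r+1)},$$
whose square root matches the bias rate $\ot^{-(2r-1)/(4r+2)}$ up to the logarithmic confidence factor $\log(2/\de)$. Assembling the four contributions according to whether they carry a factor of $\|L_K^{-r}f_\rho\|_\rho$ or of $M_\rho$ reproduces the advertised constants $C_1$, $C_2$ and $C_0$; the condition $\t^\tet\ge a\ka^2+1$ enters to control the products $\prod_{i=k+1}^t(1-\g_i\la_i-\g_iL_K)$ appearing in the detailed reverse-martingale formula, ensuring that the operator-valued factors never exceed one in norm.
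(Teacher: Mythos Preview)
Your overall strategy matches the paper's: the reversed martingale decomposition into initial, approximation, drift, and sample errors, with the Pinelis--Bernstein inequality handling the last. The initial, approximation, and drift pieces are treated essentially as the paper does.

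There is, however, a bookkeeping error in how you assign the constants. You claim that the sample-error conditional variance splits into a pure-noise piece and a ``regularization-path'' piece controlled by $\|f_{\la_k}-f_\rho\|_K^2\le\la_k^{2r-1}\|L_K^{-r}f_\rho\|_\rho^2$, and that this second piece is what feeds $\|L_K^{-r}f_\rho\|_\rho$ into $C_1$. That is not how the paper argues, and it would not reproduce the stated constants. In the paper $C_1=B_1+B_2$ comes \emph{entirely} from the bias: $B_1=(r-1/2)^{-1}\|L_K^{-r}f_\rho\|_\rho$ is the approximation-error constant and $B_2=4(1-\theta)|ab-(r-1/2)(1-\theta)|^{-1}\|L_K^{-r}f_\rho\|_\rho$ is the drift-error constant. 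The sample error contributes only to $C_2$. The key point you are missing is that the conditional variance is bounded crudely and uniformly: using $\la_tf_{\la_t}=L_Kf_\rho-L_Kf_{\la_t}$ one rewrites $A_t\w_t-b_t=(L_t-L_K)f_{\la_t}+L_Kf_\rho-y_tK_{x_t}$ and obtains $\E\|A_t\w_t-b_t\|_K^2\le 2\ka^2(\|f_{\la_t}\|_\rho^2+M_\rho^2)\le4\ka^2M_\rho^2$, via $\|f_\la\|_\rho\le M_\rho$. No $\|L_K^{-r}f_\rho\|_\rho$ appears in the sample-error analysis at all.

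A smaller point: in the reversed decomposition the weights are random operators $\Pi_{k+1}^t=\prod_{i=k+1}^t(I-\g_i(L_i+\la_iI))$ with $L_i=\langle\,\cdot\,,K_{x_i}\rangle K_{x_i}$, not the deterministic scalars $\prod(1-\g_i\la_i)$ or the operators $\prod(I-\g_i(\la_iI+L_K))$ you write. The scalar bound is only an operator-norm estimate $\|\Pi_{k+1}^t\|\le\prod_{i=k+1}^t(1-\g_i\la_i)$, valid once $\g_t(\la_t+\ka^2)\le1$, which is precisely where the hypothesis $\t^\tet\ge a\ka^2+1$ is used.
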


Its proof is given in Section \ref{sec:hk}.
\begin{rem}
Given $\de>0$, $M_\rho$ and $\|L_K^{-r}f_\rho\|_\rho$, one can optimize $a$ in order to minimize
$$h(a):=C_1a^{1/2-r}\log\frac{2}{\delta} + C_2a.$$
This yields the choice $a^*:=[C_1/((r-1/2)C_2)]^{(r+1/2)^{-1}}$, with
$$h(a^*)=\left( r+ 1/2\right) \left[\frac{C_1C_2^{r-1/2} }{(r-1/2)} \right]^{(r+1/2)^{-1}}  .$$
This asymptotic rate in $M_\rho^{(2r-1)/(2r+1)}\|L_K^{-r}f_\rho\|_\rho^{2/(2r+1)} t^{-(r-1/2)/(4r+2)}$ is the same as the best known rates in batch learning algorithms; see [Theorem 2, \citeNP{SmaZho-ShannonIII}].
\end{rem}
\begin{rem}
Note that the upper bound consists of three parts.
The first term at a rate $O(t^{-1})$, captures the influence of the initial choice $f_0=0$, which does not depend on $r$ and is faster than the remaining terms.
The second term at a rate $ O(\| L^{-r}_K f_\rho \|_\rho t^{-(2r-1)/(4r+2)})$, collects contributions from both drifts along the
regularization path $f_{\la_t}-f_{\la_{t-1}}$ and the approximation error $f_{\la_t}-f_\rho$, since they share the same rates up to different constants.
The third term at a rate $O(t^{-(2r-1)/(4r+2)})$, reflects the error caused by random fluctuations by the i.i.d. sampling. Later as we will see, the second term is a bound on the bias and the third term is a bound on the variance.
\end{rem}

\begin{thmc}[Upper Bounds for $\L2$-convergence]
Assume that $L_K^{-r} f_\rho\in \L2$ for some $r\in [1/2,1]$. Assume $a\ge4$, and  $\t^\tet\geq 2+8\ka^2 a$.

Then, for all $t\in \N$, with probability at least $1-\delta$ ($\delta\in (0,1)$),
$$\|f_t - f_\rho \|_\rho \leq
\frac{D_0}{\ot}+\left(D_1a^{-r}+\sqrt{a}D_2\log \frac{2}{\de}\right)\left(\frac{1}{\ot}\right)^{\frac{r}{2r+1}}+\left(a^{3/2}D_3\sqrt{\log \ot}+a^{5/2}D_4\right)\left(\log (2/\de)\right)^2\left(\frac{1}{\ot}\right)^{\frac{4r-1}{4r+2}}.$$
$$D_1:=\frac{5r+1}{r(1+r)}\|L_K^{-r}f_\rho\|_\rho,\,\,\,D_2:=10\ka\M,\,\,\,\,D_3=63\ka^2\M,\,\,\,\,
D_4:=50\ka^2\M\t^{1/2-\tet}.$$
\end{thmc}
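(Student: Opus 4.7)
The plan is to specialize the general Hilbert-space framework of Section~\ref{sec:general} with $\W = \L2$, using the forward \emph{martingale decomposition} that is the natural tool for weak convergence. Starting from the update (\ref{eq:ft}) and subtracting $f_\rho$, one first writes
\begin{equation*}
f_t - f_\rho = \bigl(I - \g_t\la_t\,I - \g_t L_K\bigr)(f_{t-1} - f_\rho) - \g_t\la_t f_\rho + \g_t\xi_t,
\end{equation*}
where $\xi_t := L_K(f_{t-1}-f_\rho) - (f_{t-1}(x_t)-y_t)K_{x_t}$ is an $\F_t$-martingale increment. Iterating this identity and using $\la_t f_\rho = (L_K + \la_t I)(f_\rho - f_{\la_t})$ produces a four-term decomposition of $f_t-f_\rho$: the propagated initial error, the approximation error $f_{\la_t} - f_\rho$, a telescoping drift error built from the path increments $f_{\la_k} - f_{\la_{k-1}}$, and a sample-error martingale $\sum_{k=1}^t \g_k A_{k+1,t}\xi_k$ with $A_{k,t} := \prod_{i=k}^t\bigl(I - \g_i(\la_i I + L_K)\bigr)$.

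Next I would estimate each deterministic term in the $\L2$ norm. The chosen schedule yields $\g_t\la_t = 1/\ot$, so the spectral bound $\|A_{1,t}\|_{\L2\to\L2} \leq \exp\bigl(-\sum_{k\leq t}\g_k\la_k\bigr) = O(1/\ot)$ produces the $D_0/\ot$ contribution from the initial error. For the approximation and drift pieces, Section~\ref{sec:drifts} provides, under $L_K^{-r}f_\rho\in\L2$ with $r\leq 1$, the estimates $\|f_{\la_t}-f_\rho\|_\rho \lesssim \la_t^r\|L_K^{-r}f_\rho\|_\rho$ and comparable bounds for $\|f_{\la_k}-f_{\la_{k-1}}\|_\rho$; summing the drift against the operator weights $\|A_{k+1,t}\|_{\L2\to\L2}$ and inserting $\la_t \sim a^{-1}\ot^{-1/(2r+1)}$ produces the deterministic bias term $D_1 a^{-r}\ot^{-r/(2r+1)}$.

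The principal difficulty is the sample martingale. Exploiting the isometric identity $\|g\|_\rho = \|L_K^{1/2}g\|_K$, I would rewrite the $\L2$ sample error as the $\H_K$-norm of the $\H_K$-valued martingale $\sum_k \g_k L_K^{1/2} A_{k+1,t}\xi_k$; the extra factor $L_K^{1/2}$ is precisely the smoothing that makes $\L2$ rates strictly better than $\H_K$ rates. Applying the Pinelis--Bernstein inequality of Appendix~A then requires (i) an almost-sure bound on each increment through $\|\xi_k\|_K \leq \ka\M + \ka^2\|f_{k-1}\|_K$ combined with the operator norm of $L_K^{1/2}A_{k+1,t}$, and (ii) a conditional variance estimate $\sum_k \g_k^2\Es_{k-1}\|L_K^{1/2}A_{k+1,t}\xi_k\|_K^2$ which, thanks to the smoothing factor and the spectral calculus bound $\sup_{\mu\geq 0}\mu\,(1-\g(\mu+\la))^{2(t-k)}$, integrates to order $\ot^{-2r/(2r+1)}\log\ot$. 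This yields the leading stochastic term $\sqrt{a}D_2\log(2/\de)\,\ot^{-r/(2r+1)}$.

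The last group of terms, with $\sqrt{\log\ot}$, $\log^2(2/\de)$, $a^{3/2}$, $a^{5/2}$ and exponent $(4r-1)/(4r+2)$, arises because $\|\xi_k\|_K$ depends on $\|f_{k-1}\|_K$, forcing a bootstrap: one inserts an $\H_K$-type control of $\|f_{k-1}-f_\rho\|_K$ analogous to Theorem~B into the Pinelis--Bernstein almost-sure bound. The identity $(4r-1)/(4r+2) = r/(2r+1) + (2r-1)/(4r+2)$ makes the mechanism transparent: this residual rate is the product of the leading $\L2$ rate and the $\H_K$ rate of Theorem~B, which is the characteristic signature of a self-bounding argument, and the extra powers of $a$ track the dependence of the bootstrap on the schedule constant. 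A union bound over the Pinelis--Bernstein failure events for these two martingale estimates, combined with the deterministic bias and initial-error bounds, assembles the stated inequality; the main obstacle throughout is managing the coupling between $\xi_k$ and $f_{k-1}$, which is what forces the two-stage Bernstein analysis rather than a single application.
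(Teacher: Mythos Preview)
Your overall architecture matches the paper's: martingale decomposition with deterministic propagator $\PPi_{k+1}^t$, four-term error splitting, and Pinelis--Bernstein applied to $\sum_k\g_k L_K^{1/2}\PPi_{k+1}^t\chi_k$ in $\H_K$. Two technical steps, however, are not handled by what you wrote.

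\textbf{Variance of the sample martingale.} Your termwise bound $\sup_{\mu\ge0}\mu\prod_{i>k}(1-\g_i(\mu+\la_i))^2$ is too crude: summing $\g_k^2\cdot\sup_\mu(\ldots)$ over $k$ does \emph{not} give order $\ot^{-\tet}$ (your own text already slips a $\log\ot$ in, and in fact the loss is worse for $r>1/2$). The paper's device is different: it keeps the eigenvalue fixed across $k$ and factors
\[
\sum_{k}\g_k^2\,\mu_\al\prod_{i>k}(1-a_i)^2
\;\le\;\Bigl[\sup_k\g_k\prod_{i>k}(1-a_i)\Bigr]\cdot\Bigl[\sum_k\g_k\mu_\al\prod_{i>k}(1-a_i)\Bigr],
\]
with $a_i=\g_i(\la_i+\mu_\al)$. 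The first bracket is $\le a\ot^{-\tet}$ (using only $a_i\ge\g_i\la_i$), while the second telescopes to $1-\prod_i(1-a_i)\le1$ \emph{uniformly in $\al$}. This is what produces the clean leading variance $\sigma_t^2\lesssim a\ot^{-\tet}$ and hence the $\sqrt{a}D_2\ot^{-r/(2r+1)}$ term with no spurious logarithm.

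\textbf{The bootstrap for $\|\chi_k\|_K$.} Invoking ``an $\H_K$-type control analogous to Theorem~B'' does not close the argument: Theorem~B requires $r>1/2$ (so it says nothing at the endpoint $r=1/2$ covered here), and in any case you need a bound on $\|f_{k-1}-f_\rho\|_K$ valid \emph{simultaneously for all $k\le t$} on a single event of probability $\ge1-\de$. The paper does not reuse Theorem~B; it builds a separate decomposition $f_t-f_\rho=g_t+h_t$ (Appendix~B) where $g_t$ is deterministic with $\|g_t\|_\rho\le\M$ and $\|g_t+f_\rho\|_K\le3\M/\sqrt{\la_t}$, and then proves a uniform-in-$k$ bound $\sup_{k\le t}\|h_k\|_K(k+t_0)^{\tet-1/2}\le A_{t,\de}$ via a truncated-martingale argument (truncating the event $|h_{k-1}(x_k)|>M$, showing $\|h_k\|_K\le\|\oh_k\|_K$, and applying Pinelis--Bernstein to the increments of $\|\oh_k\|_K$). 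It is precisely this preliminary estimate that is then fed into both the conditional-variance and the a.s.\ bound for $\chi_k$ in the main Pinelis--Bernstein step, and it is the source of the $\sqrt{\log\ot}$, the squared $\log(2/\de)$, and the extra powers of $a$ in the third term. Your ``self-bounding'' intuition is right, but the mechanism is this $g_t/h_t$ splitting together with a uniform supremum bound, not an appeal to Theorem~B.
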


Its proof will be given in Section \ref{sec:l2}.

\begin{rem}
When $r\in (1/2,1]$, the first term of $O(1/t)$ and the third term of $O(t^{-\frac{2r-1/2}{2r+1}}\log^{1/2} t)$ both drop faster than the second term of $O(t^{-\frac{r}{2r+1}} )$, whence they can be ignored asymptotically. The second term as the dominant one, roughly speaking has contributions from two parts: the one with constant $D_1$ comes from the bias, \emph{i.e.} the approximation and the drift errors, while the other with constant $D_2$ comes from the variance, \emph{i.e.} the sample error. 
\end{rem}

\begin{rem}
A special case is $r=1/2$, which is equivalent to say $f_\rho \in \H_K$. In this case $\ga_t = \la_t = \ot^{-1/2}$, whence it
does not satisfy the Path Following Condition (B) in Theorem A. But Theorem C suggests a weaker notion that $f_t$ follows the regularization path,
\emph{i.e.} $f_t \to f_{\rho}$ in $\L2$ rather than $\H_K$, which in fact converges at a rate of $O(t^{-1/4} \log^{1/2} t)$ uniformly for all $f_\rho \in \H_K$.
\end{rem}

\begin{rem} 
In all, the convergence in $\L2$ has rates $O(t^{-r/(2r+1)}\log^{1/2} t \cdot \log^2 1/\delta)$, a logarithmic polynomial on $\delta$, whence the Borel-Cantelli Lemma implies almost sure convergence $\|f_t - f_\rho\|_{\L2} \cas 0$. 
This result solves an open problem raised early in \cite{Yao06}.   
\end{rem}

\begin{rem}
To see the asymptotic optimality, consider the generalization error $\Err(f) - \Err(f_\rho)=\|f - f_\rho \|_\rho^2$ [e.g. see \citeNP{CucSma02}]. Since the rate $O(t^{-r/(2r+1)})$ dominates when $r>1/2$, then 
under the same condition of Theorem C, there holds with probability at least $1-\delta$ ($\delta\in (0,1)$), for all $t\in \N$,
\[ \Err(f_t) - \Err(f_\rho) \leq O(t^{-2r/(2r+1)}). \]
For $r\in (1/2,1]$, the asymptotic rate $O(t^{-2r/(2r+1)})$ has been shown to be optimal in the sense that it reaches the minimax and individual lower rate [\citeNP{CapDeV05}]. To be precise, let $\P(b,r)$ ($b>1$ and $r\in (1/2,1]$)
be the set of probability measure $\rho$ on $\X\times \Y$, such that: (A) almost surely $|y|\leq \M$;
(B) $L_K^{-r} f_\rho \in \L2$; (C) the eigenvalues $(\mu_n)_{n\in\N}$ of $L_K:\L2\to\L2$, arranged in a nonincreasing order, are subject to the decay $\mu_n= O(n^{-b})$.
Then the following minimax lower rate was given as Theorem 2 in \cite{CapDeV05},
\[ \liminf_{t\to \infty} \inf_{(z_i)_1^t \mapsto f_t } \sup_{\rho\in \P(b,r)} \Prob \left\{(z_i)_1^t \in \Z^t: \Err(f_t) - \Err(f_\rho) >
C t^{-\frac{2rb}{2rb+1}} \right\} = 1 \]
for some constant $C>0$ independent on $t$, where the infimum in the middle is taken over all algorithms as a map
$\Z^t \ni(z_i)_1^t \mapsto f_t\in \H_K$. 

Note that in the minimax lower rate, the probability measure may change for different data size $t$, which violates the fundamental identical distribution assumption in learning. Therefore \cite{GKKW02} suggests a kind of individual lower rates for learning problems. The following individual lower rate was obtained as Theorem 3 in \cite{CapDeV05}: for every $B>b$,
\[\inf_{((z_i)_1^t\mapsto f_t)_{t\in \N}} \sup_{\rho \in \P(b,r)} \limsup_{t\to \infty} \frac{\E [\Err(f_t)] - \Err(f_\rho)}{t^{-\frac{2rB}{2rB+1}}} > 0, \]
where the infimum is taken over arbitrary sequences of functions $f_t:\Z^t\to \H_K$. It can be
seen that the key difference in the individual lower rate, lies in that by putting $\limsup_{t\to \infty}$ before $\sup_{\rho \in \P(b,r)}$, the probability measure $\rho$ is applied to all sufficiently large $t$. 

Now we compare these lower rates to our upper bound.
Since $L_K:\L2\to\L2$ is a trace-class operator, its eigenvalues are summable. Therefore by taking $b=B=1$, one may obtain an
eigenvalue-independent lower rate $O(t^{-2r/(2r+1)})$ for all possible $L_K$. Therefore, the upper bound by Theorem C reaches
both the minimax and the individual lower rates.
\end{rem}

\section{Sequential Stochastic Approximations of Regularization Paths in Hilbert Spaces} \label{sec:general}
In this section, we study some stochastic approximation sequences in the more general setting of general Hilbert spaces. 

Let $\W$ be a Hilbert space with inner product $\langle~,~\rangle$ and associated norm $\|u\|:=\sqrt{\langle u,u\rangle}$, and let  $SL(\W)$  be the vector space of self-adjoint bounded linear operators on $\W$, endowed with the canonical norm 
$$\|A\|:=\sup_{\|x\|\le1}\|Ax\|.$$

Let $\X$ and $\Y$ be two topological spaces (on which we make no other assumption), let $\Z:=\X\times\Y$ and let $\rho$ be a probability measure on the Borel $\sigma$-algebra of $\Z$.  Let $A: \Z\to SL(\W )$ and $b:\Z\to\W$ be random variables on the sample space $\Z$ taking values 
respectively in $SL(\W )$ and $\W$, and let
$$\A:=\E[A],~\b:=\E[b]$$
be their expectations on $(\Z,\rho)$.

Now assume that $\A$ is a positive operator, hence invertible, but that it has an \emph{unbounded} inverse. Knowing $A$ and $b$, but not $\rho$ (and subsequently not $\A$ and $\b$), and assuming $\b\in \A(\W)$, the aim is to devise a stochastic  algorithm  approximating the solution $\w$ of the following linear equation
\begin{equation} \label{eq:hatlin}
\A w = \b,
\end{equation}
using as data an i.i.d sequence $(z_t)_{t\in\N}$ in $\Z$ with probability law $\rho$. As in the standard setting of Robbins-Monro (see \cite{RobMon51}, \cite{KiefWolf52}), it is natural to consider a stochastic gradient descent algorithm. 

More precisely, the search for the solution $\w$ of (\ref{eq:hatlin}) is equivalent to the minimization of the quadratic potential map $\V:\W\to\R$ 
$$\V(w):=\frac{1}{2}\langle\A(w-\w),w-\w\rangle,$$
whose gradient $\grad ~\V: \W\to\W$ is given by 
$$\grad ~\V(w)=\A w-\b=\Es[Aw-b].$$

In the context of online learning presented in the first two sections, $\W:=\H_K$, 
$A((x,y))(f):=f(x)K_x$, $b((x,y)):=yK_x$ (see Section \ref{sec:online}), so that $\A=L_K$, $\b=L_Kf_\rho$ and $\w=f_\rho$, and $\V(w)=\|f-f_\rho\|_{\L2}=\Err(f) - \Err(f_\rho)$ is the generalization error. 

A natural Robbins-Monro gradient descent algorithm would be 
\begin{equation}
\label{stoch-hilb}
w_{t}= w_{t-1} - \gamma_{t} ( A(z_t)w_{t-1} - b(z_t) ), 
\end{equation}
since $\E_{z_t\sim\rho}[A(z_t)w_{t-1} - b(z_t)]=\A w_{t-1}-\b$. 

However, the sample complexity analysis on Hilbert spaces, in order to estimate the sample size sufficient to approximate the minimizer with high probability, requires boundedness of $\A^{-1}$ (see for instance \cite{SmaYao06}). 

To solve this ill-posed problem with unbounded $\A^{-1}$, one may construct sequences of random variables $(A_t)_{t\in\N}$ and $(b_t)_{t\in\N}$ on the sample space $\Z$ taking values respectively in $SL(\W)$ and $\W$, with the assumption that, if
$$\A_t:=\E[A_t],~~\b_t:=\E[b_t]$$ are their expectations on $(\Z,\rho)$,  
$\A_t$ has bounded inverse and  $\A_t\to\A$, $\b_t\to\b$. Then the aim is to find assumptions ensuring that the stochastic approximation sequence $(w_t)_{t\in\N}$ iteratively defined by $w_0:=W_0$ deterministic, and
\begin{equation}
\label{defwt}
w_{t}= w_{t-1} - \gamma_{t} ( A_t(z_t) w_{t-1} - b_t(z_t)),
\end{equation}
where $(\g_t)_{t\in\N}$ is a real positive sequence, converges to the solution $\w$ of (\ref{eq:hatlin}) as $t$ goes to infinity.

This question can be divided into two subquestions: first the {\it deterministic } convergence of 
\begin{equation}
\label{defwwt}
\w_t:=\A_t^{-1} \b_t.
\end{equation}
 to $\w$, the path $t\mapsto\w_t$ being then called a \emph{regularization path} of the solution of equation (\ref{eq:hatlin}), and second the {\it probabilistic }convergence of the quantity
\begin{equation}
\label{defrt}
r_t:=w_t - \w_t,
\end{equation}
which we call the \emph{remainder} (note that $\w_t=\A_t^{-1} \b_t\not=\E[w_t]$ in general). 
In the online learning case (see Section \ref{sec:online}), we choose $A_t:=A+\la_t I$, $(\la_t)_{t\in\N}$ positive sequence, $b_t:=b$, so that $\w_t=f_{\la_t}\to f_\rho$ in $\H_K$.

We provide in Section \ref{sec:decomp} two structural decompositions of $r_t$, respectively a reversed martingale  and a martingale one. Both expand $r_t$ into three parts: one depending on the initial value of $r_.$  called the \emph{initial error}, one depending on the \emph{drift} 
\begin{equation}
\Delta_j:=\w_j - \w_{j-1}
\end{equation}
along the regularization path $(\w_t)$ called the \emph{drift error}, 
and finally one random variable of zero mean called the \emph{sample error}, respectively written as a reversed martingale and 
as a martingale at time $t$. 

The reversed martingale decomposition will, on one hand, enable us to prove Theorem \ref{thm:convergence} below,  whose corollary is Theorem A in the context of online learning, and which provides sufficient assumptions on the asymptotic behaviour of the norms of $A_t$, $A_t^{-1}$, $\A^{-1}_t$ and $b_t$ for the convergence of the variance of the remainder $r_t$. On the other hand, this reversed decomposition will yield Theorem B giving upper bounds on $f_t-f_\rho$ in $\H_K$ with high probability, proved in Section \ref{sec:hk}. 

The martingale decomposition will imply Theorem C giving upper bounds of $f_t-f_\rho$ in $\L2$ with high probability, proved in Section \ref{sec:l2}.

\subsection{Two Structural Decomposition Theorems}
\label{sec:decomp}
For all $j$, $t$ $\in\N$, let $\Pi_j^t$ be the {\it random }operator on $\W$, on the sample space $\Z^\N$, defined by
\begin{equation*} \Pi_j^t((z_i)_{i\in\N})=
\left\{
\begin{array}{lr}
\DS \prod_{i=j}^t \left( I - \gamma_i A_{i}(z_i) \right) &\tx{ if }j\leq t; \\
I &\tx{ otherwise.}
\end{array}
\right.
\end{equation*}

By a slight abuse of notation, we let $A_t:=A_t(z_t)$ and $b_t:=b_t(z_t)$ in the sequel, when there is no ambiguity. 
\begin{thm}[Reversed Martingale Decomposition] \label{thm:rmart} For all $s$, $t$ $\in \N$, $t\ge s$,
\begin{equation}  \label{eq:rmart}
r_{t} = \Pi_{s+1}^t r_{s} - \sum_{j=s+1}^t \gamma_j \Pi_{j+1}^t (A_j \w_j - b_j) - \sum_{j=s+1}^t \Pi_{j}^t \Delta_j
\end{equation}
\end{thm}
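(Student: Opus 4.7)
The plan is to first derive a one-step recursion for $r_t$ and then iterate it backward from time $t$ down to time $s$.

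Starting from the update $w_t = w_{t-1} - \g_t(A_t w_{t-1} - b_t)$, I decompose $w_{t-1} = r_{t-1} + \w_{t-1}$ and use $\w_{t-1} = \w_t - \De_t$ from the definition of the drift, which gives $w_{t-1} = r_{t-1} - \De_t + \w_t$. Substituting into the update and subtracting $\w_t$ from both sides yields
\begin{equation*}
r_t = (I - \g_t A_t)(r_{t-1} - \De_t + \w_t) + \g_t b_t - \w_t = (I - \g_t A_t)(r_{t-1} - \De_t) - \g_t(A_t \w_t - b_t),
\end{equation*}
after cancelling $\w_t - (I-\g_t A_t)\w_t = \g_t A_t \w_t$ against the centered residual. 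With the conventions $\Pi_t^t = I - \g_t A_t$ and $\Pi_{t+1}^t = I$ built into the definition of $\Pi_j^t$, this one-step recursion reads
\begin{equation*}
r_t = \Pi_t^t(r_{t-1} - \De_t) - \g_t \Pi_{t+1}^t (A_t \w_t - b_t).
\end{equation*}

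I would then proceed by backward induction on $n := t - s \geq 0$, with $s$ held fixed. The base case $n = 0$ reduces to $r_s = \Pi_{s+1}^s r_s$, which is trivial since $\Pi_{s+1}^s = I$ and both sums in \eqref{eq:rmart} are empty. For the inductive step, substitute the one-step recursion above for $r_{t-1}$ into the identity assumed at step $t-1$ (applied with the same starting remainder $r_s$), and regroup using the semigroup-type identity $\Pi_t^t \Pi_j^{t-1} = \Pi_j^t$ valid for all $s+1 \leq j \leq t$, which follows directly from the nested-product definition of $\Pi_j^t$. The new contributions of the sums at index $j = t$ appear respectively as $\Pi_t^t \De_t$ and $\g_t \Pi_{t+1}^t (A_t \w_t - b_t)$, matching the advertised pattern.

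The only subtle point is really bookkeeping: the asymmetric indexing in the two sums. The drift $\De_j$ enters the recursion inside the parenthesis $r_{j-1} - \De_j$ and hence picks up the factor $\Pi_j^j = I - \g_j A_j$ before being propagated forward, so at time $t$ it is multiplied by $\Pi_j^t$; by contrast the centered residual $A_j \w_j - b_j$ is subtracted \emph{after} that factor has acted, so it is multiplied only by $\Pi_{j+1}^t$. This asymmetry is already visible in the one-step recursion and is preserved throughout the induction. Modulo this bookkeeping, there is no genuine obstacle here: the theorem is a purely algebraic identity, with the probabilistic structure (independence, martingale property) to be exploited only later when bounding the norm of the sample-error term.
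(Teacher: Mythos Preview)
Your proof is correct and follows essentially the same approach as the paper: derive the one-step recursion $r_t = (I-\gamma_t A_t)r_{t-1} - (I-\gamma_t A_t)\Delta_t - \gamma_t(A_t\w_t - b_t)$ and then iterate by induction from $s$ to $t$. The paper's derivation of the one-step recursion is slightly different in its intermediate algebraic manipulations, and it leaves the induction entirely to the reader, whereas you spell out the semigroup identity $\Pi_t^t\Pi_j^{t-1}=\Pi_j^t$ and the indexing bookkeeping explicitly; but there is no substantive difference.
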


\begin{rem}
Note that $\Pi_{j+1}^t$ is an operator whose randomness only depends on $z_{j+1},\ldots z_t$, whereas the randomness in $A_j \w_j - b_j$, with zero mean,  only depends on $z_j$.
By independence of $z_t$, $t\in \N$, the conditional expectation $\E[\gamma_j \Pi_{j+1}^t (A_j \w_j - b_j)|z_{j+1},\ldots,z_{t}]$ is $0$, whence
for each $t$, $\ga_j \Pi_{j+1}^t(A_j\w_j - b_j)$ is a \emph{reversed martingale difference sequence} whose sum is a \emph{reversed martingale sequence} with zero mean.
For more background on reversed martingales, see for example \cite{Neveu75}. 
\end{rem}

\begin{proof}[Proof of Theorem \ref{thm:rmart}] By definition,
\begin{eqnarray*}
r_{t} & = & w_{t}-\w_{t} \\
& = & w_{t-1}- \w_{t} - \gamma_t (A_t w_{t-1} - b_t) \\
& = & (I - \gamma_t A_t)(w_{t-1} - \w_{t-1})  - (I-\gamma_t A_t) (\w_t-\w_{t-1})+\gamma_t A_t(w_{t-1}-\w_t)- \gamma_t (A_t w_{t-1} - b_t)\\
& = & (I - \gamma_t A_t)(w_{t-1} - \w_{t-1})  - (I-\gamma_t A_t) (\w_t-\w_{t-1})- \gamma_t (A_t \w_t -b_t) 
\end{eqnarray*}
which implies
\begin{equation}
 r_{t} = (I-\gamma_t A_t) r_{t-1} - \gamma_t (A_t \w_t - b_t) - (I-\gamma_t A_t)\Delta_t .
\end{equation}
The result follows by induction on $t\in \N$, $t\ge s$.
\end{proof}
For all $j$, $t$ $\in\N$,  let 
$$\chi_t=(\A_t - A_t) w_{t-1} +(b_t - \b_t),$$ 
and  let $\PPi_j^t$ be the {\it deterministic} operator on $\W$ defined by
\begin{equation*}
\PPi_j^t=
\left\{
\begin{array}{lr}
\DS \prod_{i=j}^t \left( I - \gamma_i  \A_i \right) & \tx{ if }j\leq t; \\
I, & \tx{ otherwise.}
\end{array}
\right.
\end{equation*}
\begin{thm}[Martingale Decomposition] \label{thm:mart} 
For all $s$, $t$ $\in \N$, $t\ge s$,
\begin{equation} \label{eq:mart}
r_{t} = \PPi_{s+1}^t r_{s} + \sum_{j=s+1}^t \gamma_j
\PPi_{j+1}^t \chi_j - \sum_{j=s+1}^t \PPi_{j}^t \Delta_j
\end{equation}
\end{thm}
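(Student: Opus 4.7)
The plan is to mirror the proof of Theorem \ref{thm:rmart}: first establish a one-step recursion for $r_t$ using the deterministic operator $\A_t$ (instead of the random $A_t$), and then iterate from $s$ to $t$ by induction on $t-s$.

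For the one-step recursion, I would start from $r_t = w_t - \w_t$ and expand $w_t$ via (\ref{defwt}). Writing $A_t = \A_t - (\A_t - A_t)$ and $b_t = \b_t + (b_t - \b_t)$ introduces the noise term $\gamma_t \chi_t$ where $\chi_t = (\A_t - A_t)w_{t-1} + (b_t - \b_t)$; this is exactly the substitution that converts the random operator $A_t$ into the deterministic $\A_t$ plus a fluctuation. At this stage I would use the defining identity $\A_t \w_t = \b_t$, which comes from (\ref{defwwt}), to eliminate $\b_t$ in favor of $\A_t \w_t$. Collecting terms and using $w_{t-1} - \w_t = r_{t-1} - \Delta_t$ then yields the clean recursion
\begin{equation*}
r_t = (I - \gamma_t \A_t) r_{t-1} + \gamma_t \chi_t - (I - \gamma_t \A_t)\Delta_t.
\end{equation*}

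Iterating this recursion from step $s$ to step $t$ then gives (\ref{eq:mart}). The bookkeeping point to watch, which is the only part that requires care, is the indexing: at each application of the recursion the initial-error term and the $\Delta$ term both pick up a factor $(I - \gamma_i \A_i)$, so after $t-s$ iterations the initial error carries $\PPi_{s+1}^t$ and each $\Delta_j$ carries the full $\PPi_j^t$ (including the factor at index $j$ produced by the same step that introduced $\Delta_j$). By contrast, $\chi_j$ is produced by the $j$-th step without a concurrent $(I-\gamma_j \A_j)$ factor, so it accumulates only $\PPi_{j+1}^t$. A short induction on $t \geq s$ makes the three-term sum explicit and matches (\ref{eq:mart}).

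There is no real obstacle: unlike the reversed martingale decomposition of Theorem \ref{thm:rmart}, which relies on the randomness of the product $\Pi_{j+1}^t$ being independent of $z_j$ to make $\gamma_j\Pi_{j+1}^t(A_j \w_j - b_j)$ a reverse martingale difference, here the operators $\PPi_j^t$ are deterministic, so the algebraic identity is simply an unrolling of a linear one-step recurrence. The martingale interpretation (which justifies calling (\ref{eq:mart}) a martingale decomposition) comes for free afterwards: with respect to $\F_t$, the sequence $\gamma_j \PPi_{j+1}^t \chi_j$ has conditional mean zero in $j$ since $\E[\chi_j \mid \F_{j-1}] = 0$ by construction of $\A_j$ and $\b_j$ as expectations, and $w_{j-1}$ is $\F_{j-1}$-measurable.
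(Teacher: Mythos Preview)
Your proposal is correct and follows essentially the same approach as the paper: derive the one-step recursion $r_t = (I-\gamma_t\A_t)r_{t-1} + \gamma_t\chi_t - (I-\gamma_t\A_t)\Delta_t$ by expanding $w_t$, using $\b_t = \A_t\w_t$, and then conclude by induction on $t\ge s$. Your additional commentary on the index bookkeeping and the martingale interpretation is accurate and more detailed than the paper's proof, but the core argument is identical.
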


\begin{rem}
The martingale decomposition was proposed in \cite{Yao10}.
Contrary to the reversed martingale decomposition, only the \emph{sample error} is random here, the operator $\PPi_{j+1}^t$ being deterministic. The process $(\gamma_j \PPi_{j+1}^t \chi_j)_{j\in\N}$ is a
\emph{martingale difference sequence} since, for all $j\in\N$ and $t\ge j$, $\E_{j-1}[\gamma_j \PPi_{j+1}^t \chi_j]=0$. Note that the martingale property continues to hold for dependent sampling $z_t(z_1,\ldots,z_{t-1})$, as long as $\Es_{t-1}[A_t(z_t)]=\A_t$ and $\Es_{t-1}[b_t(z_t)]=\b_t$.

The non-randomness of the operator $\PPi_j^t$ will play a key role in the proof of Theorem C in the online learning context, since it will enable us to make explicit calculations involving the spectral decomposition of  $L_K:\L2\to \L2$ (recall that $\A_i = L_K+\la_i$ then).
However, the fact that $\chi_t$, contrary to $A_t \w_t -b_t$ in the reversed expansion, does not depend only on $z_t$ but rather on the whole past $(z_i)_{0\le i \le t}$, makes it necessary to obtain a preliminary upper bound of $\chi_t$ in Appendix C,  which explains the factor $(\log2/\delta)^2$ in Theorem C, rather than $\log2/\delta$ in Theorem B.
\end{rem}

\begin{proof}[Proof of Theorem \ref{thm:mart}] By definition,
\begin{eqnarray*}
r_{t} & = & w_{t}-\w_{t} \\
& = & w_{t-1} - \w_{t}- \gamma_t (A_t w_{t-1} - b_t)  \\
& = &  (I - \gamma_t \A_t)(w_{t-1} - \w_{t})+\g_t\chi_t,\tx{ using }\b_t=\A_t\w_t\\
& = & (I - \gamma_t \A_t)r_{t-1}+ \g_t\chi_t-(I-\gamma_t \A_t) [\w_t - \w_{t-1}].
\end{eqnarray*}
The result follows by  induction on $t\in \N$, $t\ge s$.
\end{proof}

\subsection{Sufficient Conditions for the Convergence of the Remainder}
\label{sec:thmconv}
The following Theorem \ref{thm:convergence}, which implies Theorem A in the context of online learning (see Section \ref{sec:online}), states the convergence of $\|r_t\|^2=\|w_t-\w_t\|^2$ to zero in expectation, under some assumptions on the asymptotic behaviour of the gain sequence $\g_t$ and of the norms of $b_t$ and operators $A_t$, $A_t^{-1}$ and $\A_t^{-1}$. 

The corresponding  {\it Generalized Finiteness Condition} on the asymptotic behaviour of $A_t$ and $b_t$ is a generalization of the {\it Finiteness Condition} in \cite{SmaYao06}. 

\noindent {\bf Generalized Finiteness Condition.} Let  $(\amin_t)_{t\in\N}$  and  $(\amax_t)_{t\in\N}$ be deterministic positive sequences. For all $t\in\N$, assume that almost surely, $A_t$ is positive, and the operators $A_t$, $\A_t$ and $\A$ are invertible (although $\A$ has an   \emph{unbounded} inverse), and that 
$$\|A_t\|\le\amax_t,~\|A_t^{-1}\|\le\amin_t^{-1}.$$

\begin{thm}\label{thm:convergence} Consider the stochastic approximation sequence $(w_t)_{t\in\N_0}$ and remainder $(r_t)_{t\in\N_0}$ defined in (\ref{defwt})-(\ref{defrt}).

Suppose that the Generalized Finiteness Condition holds, and that the variance $\E \|A_t \w_t - b_t \|^2$ is uniformly bounded in $t\in \N$.
Then
\[ \E \|r_t \|^2 \to 0, \]
if the following assumptions hold:

\noindent (A) $\g_t\to0$ and $\DS \sum_{t} \gamma_t \amin_t = \infty$,

\noindent (B) $\DS \limsup_{t\to \infty} \sum_{k=1}^n\g_k^2\prod_{i=k+1}^n(1-\g_i\amin_i)^2 =0$,

\noindent (C)  $\DS \limsup_{t\to \infty} \sum_{k=1}^n\|\Delta_k\|\prod_{i=k+1}^n(1-\g_i\amin_i)=0$.

\end{thm}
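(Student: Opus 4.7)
The plan is to apply the Reversed Martingale Decomposition (Theorem \ref{thm:rmart}) with $s=0$ to write
\[ r_t = \Pi_1^t r_0 - \sum_{j=1}^t \gamma_j \Pi_{j+1}^t(A_j \w_j - b_j) - \sum_{j=1}^t \Pi_j^t \Delta_j, \]
and estimate $\E \|r_t\|^2$ by (three times) the sum of the $L^2$-expected squared norms of the three terms, which I will call the \emph{initial error}, \emph{sample error}, and \emph{drift error}. I will show that each vanishes in the limit, using respectively assumptions (A), (B), and (C).

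The central operator estimate is as follows. For $i$ sufficiently large so that $\gamma_i \amax_i \le 1$ (which holds eventually since $\gamma_t \to 0$ by (A)), the Generalized Finiteness Condition guarantees that the positive operator $A_i(z_i)$ has spectrum almost-surely in $[\amin_i, \amax_i]$, so $\|I - \gamma_i A_i\| \le 1 - \gamma_i \amin_i$; by submultiplicativity of the operator norm, this gives $\|\Pi_j^t\| \le \prod_{i=j}^t (1 - \gamma_i \amin_i)$ almost surely. Absorbing finitely many initial factors into a bounded deterministic constant, I may assume this bound holds from the start.

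For the initial error, $\E\|\Pi_1^t r_0\|^2 \le \|r_0\|^2 \prod_{i=1}^t(1-\gamma_i\amin_i)^2 \to 0$, since $\sum_i \gamma_i \amin_i = \infty$ by (A). For the drift error,
\[\Big\|\sum_{j=1}^t \Pi_j^t \Delta_j\Big\| \le \sum_{j=1}^t \|\Delta_j\| \prod_{i=j}^t (1 - \gamma_i \amin_i) \le \sum_{j=1}^t \|\Delta_j\| \prod_{i=j+1}^t(1-\gamma_i\amin_i),\]
whose square vanishes by (C). The sample error is where the reversed martingale structure is crucial. For $j<k$ the cross-term $\E\langle \gamma_j\Pi_{j+1}^t(A_j\w_j-b_j),\gamma_k\Pi_{k+1}^t(A_k\w_k-b_k)\rangle$ vanishes: conditioning on $(z_{j+1},\dots,z_t)$, which carries the randomness of both $\Pi$-factors and of $A_k\w_k-b_k$, the remaining factor $A_j\w_j-b_j$ is independent by the i.i.d.\ assumption and has mean $\A_j\w_j-\b_j=0$. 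Hence the squared norm decomposes diagonally, and by the independence of $\Pi_{j+1}^t$ (depending only on $z_{j+1},\dots,z_t$) from $A_j\w_j-b_j$ (depending only on $z_j$),
\[\E\Big\|\sum_{j=1}^t \gamma_j\Pi_{j+1}^t(A_j\w_j-b_j)\Big\|^2 \le V\sum_{j=1}^t \gamma_j^2 \prod_{i=j+1}^t(1-\gamma_i\amin_i)^2,\]
with $V:=\sup_j \E\|A_j\w_j-b_j\|^2<\infty$ by the uniform variance hypothesis. This tends to $0$ by (B).

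The main obstacle is the careful handling of the non-commutative random product $\Pi_j^t$: one must derive a deterministic upper bound on $\|\Pi_j^t\|$ from the almost-sure spectral information on $A_i$, and for the sample error one must cleanly disentangle the randomness coming from $(z_{j+1},\dots,z_t)$ inside $\Pi_{j+1}^t$ from that coming from $z_j$ inside $A_j\w_j-b_j$, in order to simultaneously exploit the i.i.d.\ independence (for the factorization of second moments) and the zero-mean property (for the vanishing of cross-terms).
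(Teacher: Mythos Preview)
Your proposal is correct and follows essentially the same route as the paper: reversed martingale decomposition, the almost-sure operator bound $\|\Pi_j^t\|\le\prod_{i=j}^t(1-\gamma_i\amin_i)$ once $\gamma_i\amax_i\le1$, and then (A), (C), (B) to kill the initial, drift, and sample errors respectively, the latter via orthogonality of the reversed martingale increments. The only cosmetic difference is that the paper applies the decomposition from a time $j_0$ chosen so that $\gamma_t\amax_t\le1$ for $t\ge j_0$ (yielding an initial term $\|\Pi_{j_0+1}^t r_{j_0}\|$ with random but $L^2$-bounded $r_{j_0}$), whereas you start at $s=0$ and absorb the finitely many early factors into a constant; both are valid.
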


The following Lemma \ref{lem:sums} enables us to provide simple sufficient conditions for $(B)$ and $(C)$ in Corollary \ref{cor:convergence}.

\begin{lem}
\label{lem:sums}
Let $(a_t)_{t\in\N}$ and $(b_t)_{t\in\N}$ be two real positive sequences converging to $0$ when $t$ goes to infinity. Then 
$$\limsup_{t\to\iy} a_t/b_t=0\tx{ and }\sum_{t\in\N}b_t=\iy\implies\limsup_{t\to\iy}\sum_{k=1}^n a_k\prod_{i=k+1}^n(1-b_i)=0.$$
\end{lem}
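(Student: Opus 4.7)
The plan is to use the standard ``telescoping'' identity
$$\sum_{k=1}^n b_k \prod_{i=k+1}^n (1-b_i) = 1 - \prod_{i=1}^n (1-b_i),$$
which holds whenever $b_i\in(0,1)$ and follows by noting that $P_k - P_{k-1} = b_k P_k$ with $P_k := \prod_{i=k+1}^n(1-b_i)$. Since $b_t\to 0$, there exists $N_0$ such that $b_i<1$ for all $i\ge N_0$, so the telescoping identity (applied to tails starting from $N_0$) gives the uniform bound $\sum_{k=N_0+1}^n b_k\prod_{i=k+1}^n(1-b_i)\le 1$. Combined with $\sum b_i=\infty$, which forces $\prod_{i=N_0+1}^n(1-b_i)\to 0$ as $n\to\infty$, this is the main analytic input.

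Next, I would fix $\varepsilon>0$ and, using $\limsup a_t/b_t=0$, choose $N\ge N_0$ such that $a_k\le \varepsilon b_k$ for all $k>N$. I then split the sum at $N$:
$$\sum_{k=1}^n a_k \prod_{i=k+1}^n (1-b_i) \;=\; \underbrace{\sum_{k=1}^N a_k \prod_{i=k+1}^n (1-b_i)}_{S_1(n)} \;+\; \underbrace{\sum_{k=N+1}^n a_k \prod_{i=k+1}^n (1-b_i)}_{S_2(n)}.$$

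For the tail $S_2(n)$, I apply $a_k\le \varepsilon b_k$ and the telescoping bound above to get $S_2(n)\le \varepsilon$ for all $n\ge N$. For the head $S_1(n)$, I factor out $\prod_{i=N+1}^n(1-b_i)$ (bounding the remaining factors $\prod_{i=k+1}^N(1-b_i)$ uniformly in $k\le N$ by some finite constant $C_N$, possibly using absolute values to handle any indices below $N_0$ where $(1-b_i)$ could be nonpositive) and use $\sum_i b_i=\infty$ plus $b_i<1$ for $i>N\ge N_0$ to conclude $\prod_{i=N+1}^n(1-b_i)\to 0$, hence $S_1(n)\to 0$ as $n\to\infty$. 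Thus $\limsup_{n\to\infty}\sum_{k=1}^n a_k\prod_{i=k+1}^n(1-b_i)\le\varepsilon$, and letting $\varepsilon\to 0$ finishes the proof.

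The only genuinely tricky point, more bookkeeping than substance, is the behaviour of the finitely many early factors $(1-b_i)$ with $b_i\ge 1$; I handle this by pushing $N$ past $N_0$ so the tail telescoping sum only involves factors in $(0,1)$, and absorbing the finitely many ``bad'' early factors into the uniformly bounded constant multiplying $S_1(n)$, which tends to zero regardless.
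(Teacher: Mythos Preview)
Your proof is correct and follows essentially the same route as the paper's: split the sum at an index $N$ past which $a_k\le\varepsilon b_k$ and $b_k<1$, control the tail via the telescoping identity $\sum_{k>N} b_k\prod_{i>k}(1-b_i)=1-\prod_{i>N}(1-b_i)\le 1$, and kill the finite head using $\prod_{i>N}(1-b_i)\to 0$ from $\sum b_i=\infty$. The paper is slightly terser about the head term (it simply asserts the existence of $N_1$ making it $<\varepsilon/2$), while you spell out the factoring and the handling of the finitely many early indices with $b_i\ge 1$; this is only extra detail, not a different argument.
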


\begin{cor}
\label{cor:convergence}In the statement of Theorem \ref{thm:convergence}, assumptions  $(B)$ and $(C)$ may respectively be replaced by

\noindent (B') $\DS \limsup_{t\to \infty} \frac{\gamma_t}{\amin_t} =0$,

\noindent (C') $\DS \limsup_{t\to \infty} \frac{\|\Delta_t\|}{\amin_t \gamma_t } =0$.
\end{cor}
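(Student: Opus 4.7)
The plan is to derive (B) from (B') and (C) from (C') by two direct applications of Lemma~\ref{lem:sums}; the only work is identifying the right sequences $(a_k)$, $(b_i)$ and checking the Lemma's hypotheses.

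For the implication (C')$\Rightarrow$(C) I would take $a_k := \|\Delta_k\|$ and $b_i := \gamma_i \amin_i$, so that the sum in (C) is exactly $\sum_{k=1}^n a_k \prod_{i=k+1}^n(1-b_i)$. The ratio $a_k/b_k$ is precisely the sequence appearing in (C'), hence tends to $0$; the divergence $\sum_i b_i = \infty$ is hypothesis (A); and $b_k \to 0$ once we know $\amin_k$ stays bounded (implicit in the contractivity requirement $\gamma_i \amin_i\in(0,1)$ eventually), after which $a_k = b_k \cdot (a_k/b_k) \to 0$ follows. Lemma~\ref{lem:sums} then yields (C) directly.

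For (B')$\Rightarrow$(B), the only new wrinkle is the square inside the product; I would use the algebraic identity $(1-x)^2 = 1 - x(2-x)$ to rewrite $(1-\gamma_i \amin_i)^2 = 1 - b_i$ with $b_i := \gamma_i \amin_i(2 - \gamma_i \amin_i)$. Taking $a_k := \gamma_k^2$, the sum in (B) becomes $\sum_{k=1}^n a_k \prod_{i=k+1}^n(1-b_i)$, again matching the Lemma's template. I would then verify: $a_k \to 0$ from $\gamma_k \to 0$ in (A); $b_i \to 0$ from $\gamma_i \amin_i \to 0$; the ratio $a_k/b_k = \gamma_k/[\amin_k(2-\gamma_k\amin_k)] \to 0$ using (B') together with $2-\gamma_k\amin_k \geq 1$ eventually; and $\sum_i b_i = \infty$, which holds since $b_i \geq \gamma_i \amin_i$ eventually, reducing to (A). Lemma~\ref{lem:sums} then gives (B).

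The argument is essentially bookkeeping around Lemma~\ref{lem:sums}; the only mildly subtle point --- the ``main obstacle'', such as it is --- is the tacit condition $\gamma_i \amin_i \in (0,1)$ for large $i$, needed both to make the factors $(1-\gamma_i \amin_i)$ contractive and to guarantee $b_i \to 0$ in the verification for (B). This is automatic in every concrete setting of the paper, since $\gamma_i \to 0$ by (A) and $\amin_i$ stays bounded (in Section~\ref{sec:online} one in fact has $\amin_i = \lambda_i \to 0$), and can always be absorbed into the standing hypotheses if needed.
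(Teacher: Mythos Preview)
Your proof is correct and follows essentially the same route as the paper: two applications of Lemma~\ref{lem:sums} with $a_k=\|\Delta_k\|$ (resp.\ $a_k=\gamma_k^2$) and divergent minorant $\gamma_k\amin_k$. The one place you deviate is in handling the square in (B): the paper simply takes $b_t=\gamma_t\amin_t$ and (implicitly) uses $(1-\gamma_i\amin_i)^2\le 1-\gamma_i\amin_i$ once $\gamma_i\amin_i\in[0,1]$, whereas you rewrite $(1-\gamma_i\amin_i)^2=1-b_i$ exactly with $b_i=\gamma_i\amin_i(2-\gamma_i\amin_i)$; both work and require the same side-condition. Your explicit flagging of the tacit hypothesis $\gamma_i\amin_i\to0$ (needed for Lemma~\ref{lem:sums}, and used but not stated by the paper, which invokes $\gamma_t\amax_t\to0$ in the proof of Theorem~\ref{thm:convergence}) is a fair point; note however that your parenthetical justification ``$\amin_k$ stays bounded, implicit in $\gamma_i\amin_i\in(0,1)$'' is not literally correct --- the contractivity bound does not force $\amin_k$ bounded --- but your concluding remark that this is automatic in the applications and absorbable into the standing hypotheses is the right resolution.
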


Theorem \ref{thm:convergence} and Lemma \ref{lem:sums}  are proved in Appendix C, and imply Corollary \ref{cor:convergence}:  Lemma \ref{lem:sums} with $a_t:=\g_t^2$ (resp. $a_t:=\|\Delta_t\|$) and $b_t:=\amin_t \gamma_t$ shows that $(B)$ (resp. $(C)$)  implies $(B')$ (resp. $(C')$).  

The proof of Theorem \ref{thm:convergence} makes use of the following preliminary Lemma \ref{lem:onebnd} (shown in Appendix C), which implies some upper bounds of the norms of operators $\Pi_j^t$, $t\geq j$, also used in Sections \ref{sec:l2} and \ref{sec:hk}.
\begin{lem} \label{lem:onebnd} 
Let  $j_0\in\N$, and let $(\g_t)_{t\in\N}$, $(\amin_t)_{t\in\N}$ and $(\amax_t)_{t\in\N}$be real positive  sequences, and let $(A_t)_{t\in\N}$ be a sequence of positive compact self-adjoint operators on the Hilbert space $\W$. Assume that, for all $t\ge j_0$,  
$\|A_t\|\le\amax_t$ , $\|A_t^{-1}\|\le\amin_t^{-1}$ and $\g_t\amax_t\le1$. 

Then, for all  $t\ge  j_0$ and $j_0\le j\le t$,

(A) $\DS \|I - \gamma_t A_t \|\leq 1 - \g_t\la_t$;

(B) $\DS \|\Pi_j^t\| \leq \prod_{i=j}^t(1-\g_j\la_j)$.

In particular, if the two sequences $(\g_t)_{t\in\N}$ and $(\amin_t)_{t\in\N}$ are such that, for all $t\ge j_0$, $\g_t\la_t:=c\ot^{-1}$ for some $c$, $t_0$ $>0$, then $(B)$ yields $$\|\Pi_j^t\| \leq\left(\frac{j+t_0}{\ot+1}\right)^c.$$
\end{lem}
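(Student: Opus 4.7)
The plan is to exploit the spectral theorem for positive compact self-adjoint operators. For part (A), I would first observe that $\|A_t\|\le\amax_t$ pins $\mathrm{spec}(A_t)$ inside $[0,\amax_t]$, while $\|A_t^{-1}\|\le\amin_t^{-1}$ forces the smallest spectral value to be at least $\amin_t$, giving $\mathrm{spec}(A_t)\subseteq[\amin_t,\amax_t]$. Combined with the assumption $\g_t\amax_t\le1$, spectral mapping then places $\mathrm{spec}(I-\g_tA_t)\subseteq[1-\g_t\amax_t,\,1-\g_t\amin_t]\subseteq[0,\,1-\g_t\amin_t]$, and self-adjointness identifies the operator norm with the spectral radius, yielding (A).

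For part (B), the plan is simply to apply submultiplicativity of the operator norm to the finite product defining $\Pi_j^t$, and then invoke (A) termwise:
\[
\|\Pi_j^t\|\le\prod_{i=j}^t\|I-\g_iA_i\|\le\prod_{i=j}^t(1-\g_i\amin_i).
\]

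For the concluding ``in particular'' assertion, I would substitute $\g_i\amin_i=c/(i+t_0)$ into the bound from (B), apply the elementary inequality $1-x\le e^{-x}$ to convert the product into the exponential of a sum, and then compare the resulting harmonic-type sum to an integral via
\[
\sum_{i=j}^t\frac{1}{i+t_0}\ge\int_j^{t+1}\frac{dx}{x+t_0}=\log\frac{t+t_0+1}{j+t_0},
\]
which yields the power-law decay $\bigl((j+t_0)/(t+t_0+1)\bigr)^{c}$. The only genuinely delicate point will be the spectral lower bound $A_t\ge\amin_t I$ extracted from the operator-norm bound on $A_t^{-1}$; without this, $I-\g_tA_t$ might fail to have non-negative spectrum and the naive norm estimate $\|I-\g_tA_t\|\le\max(|1-\g_t\amin_t|,|1-\g_t\amax_t|)$ would no longer reduce to $1-\g_t\amin_t$. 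Once this lower bound is secured, the remaining steps are a routine combination of spectral calculus, submultiplicativity, and a standard integral comparison.
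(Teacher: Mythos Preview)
Your proposal is correct and follows essentially the same route as the paper: both invoke the spectral theorem for compact self-adjoint operators to confine the eigenvalues of $A_t$ to $[\amin_t,\amax_t]$, deduce (A) from $\g_t\amax_t\le1$ and the identification of the norm with the spectral radius, obtain (B) by submultiplicativity, and derive the power-law bound via $1-x\le e^{-x}$ together with the integral comparison $\sum_{i=j}^t(i+t_0)^{-1}\ge\log\bigl((\ot+1)/(j+t_0)\bigr)$.
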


\subsection{Application to online learning and  Proof of Theorem A}
\label{sec:online}
The online learning sequence $(f_t)_{t\in\N_0}$ defined in (\ref{eq:ft}), with assumptions (I)-(II), can be interpreted as a sequential stochastic approximation algorithm $(w_t)_{t\in\N_0}$ in (\ref{stoch-hilb}), taking values in the Hilbert space $\W:=\H_K$, with 
\begin{eqnarray*}
&A((x,y)):=\langle.,K_x\rangle_K K_x, &b((x,y)):=yK_x\\
& A_t:=A + \lambda_t I, & b_t:=b,
\end{eqnarray*}
so that 
\begin{eqnarray*}
&&\A=L_K,~~\b=L_Kf_\rho,~~\w=f_\rho,\\
&&\A_t=L_K+\la_t I,~~\w_t=f_{\la_t},\\
&&r_t=f_t-f_{\la_t},~~\De_t=f_{\la_t}-f_{\la_{t-1}}.
\end{eqnarray*}

Let us emphasize that the operator $A$ is only defined from $\H_K$ to $\H_K$ here (we would not be able to define $f(x)$ for $f\in\L2$). The properties mentioned below will only hold on $\H_K$ in general, and in particular the norms of operators $\|.\|$ are assumed to be  $\|.\|_{\H_K\to\H_K}$, although operators defined on $\L2$ and commuting with $L_K^{1/2}$ (which is an isometry between $\L2$ and $H_K$) have the same norm in either spaces.
 
Note that $A(z)$ is positive for all $z=(x,y)\in\Z$ (which implies $\A=L_K$ positive as well), since 
$$\langle A((x,y))(f),f\rangle=\langle f(x)K_x,f\rangle=f(x)^2\ge0$$ 
for all $f\in\H_K$. 

Also, for all $f\in\H_K$, $\|Af\|=|\langle K_x,f\rangle |\|K_x\|\leq\|K_x\|^2\|f\|$, so that
$$\|A\|\leq\ka^2,~~\|\A\|\leq\E(\|A\|)\leq\ka^2.$$

Hence
\begin{equation}
\label{atest}
\|A_t\|\leq\amax_t:=\la_t+\ka^2,~~\|A_t^{-1}\|^{-1}\geq\amin_t:=\la_t.
\end{equation}

Let us prove Theorem A; assume its conditions hold. Then the  {\it Generalized Finiteness Condition} of Section \ref{sec:thmconv} is satisfied. Now $f_\rho\in\H_K$ implies $\|f_\la-f_\rho\|_K\to0$ when $\la\to0$.
Therefore the conclusion follows from the convergence of  $\E[\|w_t-\w_t\|^2]=\E[\|f_t-f_{\la_t}\|^2]$ to $0$ in Theorem \ref{thm:convergence}, the condition of uniform boundedness of $\E \|A_t \w_t - b_t \|^2$ being shown in Lemma \ref{lem:xi} (B).

For convenience, we will use, in Sections \ref{sec:hk} and \ref{sec:l2},  the notation
$$L_t:=A(z_t)=\langle. ,K_{x_t}\rangle_K K_{x_t}$$

We will assume that 
\begin{equation}
\label{choice}
\gamma_t = \frac{a}{\ot^{\theta}}, \ \ \ \ \ \lambda_t = \frac{b}{\ot^{1-\theta}}, \ \ \mbox{for some $\theta\in[0,1]$, $\t>0$, $a\in(0,\t^\tet)$, $b\in(0,\t^{1-\tet})$, } 
\end{equation}
and then study the $\H_K$ or $\L2$- norm of the error $f_t-f_\rho$, based using a reverse martingale (resp. martingale) decomposition in Section \ref{sec:hk} (resp.  Section \ref{sec:l2}). We will then optimize the upper bounds in $\theta$, $a$ and $b$ by using some prior information on the regularity of $f_\rho$.

Finally observe that Lemma \ref{lem:onebnd} implies, using (\ref{atest}), that for all $j$, $t$ $\in\N$, $t\ge j$, 
\begin{equation}
\label{bdat}
\|I-\g_tA_t\|\leq1-\g_t\la_t=1-\frac{ab}{\ot},~~~~~\|\Pi_j^t\|\le\left(\frac{j+t_0}{\ot+1}\right)^{ab}
\end{equation}
if $t_0^\tet\geq a(\ka^2+b)$ (and, therefore, $\g_t\amax_t=\g_t\la_t+\g_t\ka^2\le abt_0^{-1}+a\ka^2t_0^{-\tet}\leq1$). 

Similarly, for all $j$, $t$ $\in\N$, $t\ge j$, 
\begin{equation}
\label{bdaht}
\|I-\g_t\A_t\|\leq1-\frac{ab}{\ot},~~~~~~\|\PPi_j^t\|\le\left(\frac{j+t_0}{\ot+1}\right)^{ab}
\end{equation}
if $t_0^\tet\geq a(\ka^2+b)$; the norm in (\ref{bdaht}) can be $\|.\|_{\H_K\to\H_K}$ as well as $\|.\|_{\L2\to\L2}$.
\section{Estimates of Drift on the Regularization Path} \label{sec:drifts}
This section is devoted to estimates on the drift $\|f_\la - f_\mu\|$, $\la,\mu>0$, along the regularization path $\la\to f_{\la}$, in $\H_K$-norm or $\L2$-norm, assuming that $L_K^{-r}f_\rho\in\L2$ for some $r>0$. 
These estimates enable us to upper bound on the one hand the approximation error $\|f_\la-f_\rho\|$ (when specialized to $\mu=0$), 
and on the other hand the {\it drift error} in the martingale and reversed martingale decompositions. 

Note that the estimate $\|f_\la - f_\mu\|_K=O(|\la-\mu|)$ in the case $r=1$ is not improved by increasing $r$.
This is related to a phenomenon usually refered to as the \emph{saturation} problem in regularizations \cite{EngHanNeu96}.

\begin{thm} \label{thm:pie1} Let $\lambda>\mu\geq 0$. Assume that $L_K^{-r}f_\rho\in \L2$ for
some $r\ge-1$.

(A) If $r\in [-1,1]\setminus\{0\}$, then
\[ \|f_\lambda-f_\mu\|_\rho \leq  |\lambda^r - \mu^r| \frac{\|L_K^{-r} f_\rho\|_\rho}{r}; \]

(B) If $r\geq 1$, then for any $1\leq s \leq r$,
\[ \|f_\lambda-f_\mu\|_\rho \leq  \kappa^{2(s-1)} |\lambda - \mu| \|L_K^{-s} f_\rho\|_\rho; \]

(C) If $r\geq 1/2$, then
\[ \|f_\lambda - f_\mu \|_K \leq \frac{|\la - \mu|}{\la} \|f_\rho\|_K; \]

(D) If $r\in [-1/2,3/2]\setminus\{1/2\}$, then
\[ \|f_\lambda-f_\mu\|_K \leq  |\lambda^{r-1/2} - \mu^{r-1/2}| \frac{\|L_K^{-r} f_\rho\|_\rho}{|r-\frac{1}{2}|}; \]

(E) If $r\geq 3/2$, then for any $3/2\leq s \leq r$,
\[ \|f_\lambda-f_\mu\|_K \leq  \kappa^{2(s-3/2)} |\lambda - \mu| \|L_K^{-s} f_\rho\|_\rho. \]
\end{thm}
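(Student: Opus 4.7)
The plan is to reduce all five estimates to pointwise inequalities applied eigenvalue by eigenvalue in the spectral decomposition of $L_K$. Writing $f_\rho=\sum_\alpha a_\alpha \phi_\alpha$ in the orthonormal $\L2$-basis of eigenfunctions of $L_K$, the regularity hypothesis reads $\sum_\alpha a_\alpha^2\mu_\alpha^{-2r}=\|L_K^{-r}f_\rho\|_\rho^2$. From the closed form $f_\la=(L_K+\la I)^{-1}L_K f_\rho$ in (\ref{expfla}) one reads off
$$ f_\la-f_\mu=\sum_\alpha a_\alpha\, h_{\la,\mu}(\mu_\alpha)\,\phi_\alpha,\qquad h_{\la,\mu}(t):=\frac{t(\mu-\la)}{(t+\la)(t+\mu)}, $$
so $\|f_\la-f_\mu\|_\rho^2=\sum a_\alpha^2 h_{\la,\mu}(\mu_\alpha)^2$ and, via the isometry $L_K^{1/2}:\L2\to\H_K$, $\|f_\la-f_\mu\|_K^2=\sum a_\alpha^2\mu_\alpha^{-1}h_{\la,\mu}(\mu_\alpha)^2$. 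Substituting $a_\alpha=\mu_\alpha^r c_\alpha$ (respectively $a_\alpha=\mu_\alpha^s c_\alpha$) with $\sum c_\alpha^2=\|L_K^{-r}f_\rho\|_\rho^2$ (resp.\ $\|L_K^{-s}f_\rho\|_\rho^2$), each of (A), (B), (D), (E) then reduces to a uniform bound on $t\mapsto t^{p}|h_{\la,\mu}(t)|$ for $t\in(0,\ka^2]$, with $p=r$, $s$, $r-1/2$, or $s-1/2$ according to the norm and claim.

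For the unsaturated estimates (A) and (D), I will use the integral representation
$$ h_{\la,\mu}(t)=-t\int_\mu^\la\frac{ds}{(t+s)^2}, $$
together with the elementary pointwise inequality
$$ \frac{t^{p+1}}{(t+s)^2}\leq s^{p-1}\qquad (s,t>0), $$
which is valid on the entire range $p\in[-1,1]$: indeed $p+1\geq 0$ gives $t^{p+1}\le(t+s)^{p+1}$, and $p-1\le 0$ gives $(t+s)^{p-1}\le s^{p-1}$. Integrating in $s$ yields $t^{p}|h_{\la,\mu}(t)|\le(\la^{p}-\mu^{p})/p$; taking $p=r\in[-1,1]\setminus\{0\}$ produces (A), while $p=r-1/2\in[-1,1]\setminus\{0\}$, i.e.\ $r\in[-1/2,3/2]\setminus\{1/2\}$, produces (D).

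In the saturated regime (B) and (E), the exponent $p$ leaves $[-1,1]$ and the integral bound fails. Instead I will use the cruder estimate $(t+\la)(t+\mu)\ge t^2$ together with $\mu_\alpha\le\|L_K\|\le\ka^2$. This gives
$$ t^{s}|h_{\la,\mu}(t)|=\frac{t^{s+1}|\la-\mu|}{(t+\la)(t+\mu)}\le t^{s-1}|\la-\mu|\le\ka^{2(s-1)}|\la-\mu| $$
for $s\ge1$ (yielding (B)), and likewise $t^{s-1/2}|h_{\la,\mu}(t)|\le\ka^{2(s-3/2)}|\la-\mu|$ for $s\ge 3/2$ (yielding (E)). Part (C) needs no spectral calculation at all: the resolvent identity $(L_K+\la I)^{-1}-(L_K+\mu I)^{-1}=(\mu-\la)(L_K+\la I)^{-1}(L_K+\mu I)^{-1}$ applied to $L_Kf_\rho$ rewrites $f_\la-f_\mu=(\mu-\la)(L_K+\la I)^{-1} f_\mu$, and one concludes using $\|(L_K+\la I)^{-1}\|_{\H_K\to\H_K}\leq 1/\la$ (positive operator with spectrum in $[\la,\,\la+\ka^2]$) together with $\|f_\mu\|_K\leq\|f_\rho\|_K$ (since the spectral multipliers $\mu_\alpha/(\mu_\alpha+\mu)$ are bounded by one).

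The only real difficulty is bookkeeping: matching each exponent $p$ to the appropriate norm and to the stated range of $r$ or $s$, and noticing that the values $r=0$ and $r=1/2$ must be excluded in (A) and (D) precisely because the primitive of $s^{p-1}$ degenerates to a logarithm there. Once this is arranged, all five bounds follow from a single pointwise inequality together with the resolvent identity.
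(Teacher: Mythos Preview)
Your proof is correct. All five parts go through as you describe; the key pointwise inequality $t^{p+1}/(t+s)^2\le s^{p-1}$ for $p\in[-1,1]$ is valid and, after integration in $s$, yields exactly the bound $|\la^p-\mu^p|/|p|$ required for (A) and (D). Parts (B), (E), and (C) are straightforward and match the paper's argument essentially verbatim.

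The route for (A) and (D) differs from the paper's in a way worth noting. The paper works with the operator $J_{u,\la,\mu}=(\mu-\la)(L_K+\la I)^{-1}(L_K+\mu I)^{-1}L_K^{1+u}$ and bounds its norm by first factoring into pieces with individually controllable norms, obtaining the intermediate estimate $|\la-\mu|\,\la^{-1}\max(\la^t,\mu^t)$, and then invoking a convexity argument (that $x\le(1-(1-x)^t)/t$ on $[0,1]$ for $t\in(0,1]$, and the analogue for $t<0$) to pass from this to $|\la^t-\mu^t|/|t|$. Your integral representation $h_{\la,\mu}(t)=-t\int_\mu^\la(t+s)^{-2}\,ds$ bypasses that two-step structure entirely: the bound $|\la^p-\mu^p|/|p|$ falls out directly from integrating $s^{p-1}$, with no separate convexity step. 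Both arguments are spectral in nature and give the same constant, but yours is shorter and makes the restriction $p\in[-1,1]$ (equivalently the ranges on $r$ in (A) and (D)) transparent as exactly the range where both monotonicity steps $t^{p+1}\le(t+s)^{p+1}$ and $(t+s)^{p-1}\le s^{p-1}$ hold.
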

\begin{proof}
Fix $\lambda>\mu$, assume $L_K^{-r}f_\rho\in \L2$ for
some $r\in[-1,1]$ and let  $\|.\|:=\|.\|_{\L2\to\L2}$. 
We first prove that,  for all $u\ge-1$, if we let 
$$J_{u,\la,\mu}:=(\mu-\lambda)(L_K+\la I)^{-1}(L_K+\mu I)^{-1}L_K^{1+u}$$
then, for all $t\in[-1,1]\setminus\{0\}$, $u\ge t$,
\begin{equation}
\label{ubop}
\|J_{u,\la,\mu}\|\leq\ka^{2(u-t)}|\lambda^t-\mu^t|/|t|.
\end{equation}
	
This will be useful, since
\begin{equation} \label{eq:pierre}
f_\la-f_\mu=(\mu-\lambda)(L_K+\la I)^{-1}(L_K+\mu I)^{-1}L_Kf_\rho
=J_{r,\la,\mu}L_K^{-r}f_\rho,
\end{equation} 
using that 
$$(L_K+\la I)f_\la=L_Kf_\rho,~(L_K+\mu I)f_\mu=L_K f_\rho.$$

Let us prove (\ref{ubop}): using $\|L_K^{u-t}\|=\|L_K\|^{u-t}\le\ka^{2(u-t)}$ by (\ref{ub-lk}), and $\max(t,0)+\min(0,t)=t$, 
\begin{eqnarray*}
\|J_{u,\la,\mu}\|&\leq&|\la-\mu|
\|(L_K+\la I)^{\max(t,0)-1}(L_K+\mu I)^{\min(t,0)}L_K^{-(t+1)}L_K^{1+u}\|\\
&\leq&|\la-\mu|\la^{-1}\la^{\max(t,0)}\mu^{\min(t,0)}\|L_K^{u-t}\|\le\ka^{2(u-t)}|\la-\mu|\la^{-1}\max(\la^{t},\mu^{t})\\
&=&\ka^{2(u-t)}
\Lambda(\mu)|\lambda^t-\mu^t|,
\end{eqnarray*}
where
\[ \Lambda(\mu):=
\begin{cases}
\frac{1-\mu/\lambda}{1-(\mu/\lambda)^t}&\tx{ if }t>0\\
\frac{1-\mu/\lambda}{1-(\lambda/\mu)^t} &\tx{ if }t<0
\end{cases}\]
Now
$$\Lambda(\mu)\le\frac{1}{|t|}.$$
Indeed, if $t>0$, then this is a consequence of 
 $x\leq(1-(1-x)^t)/t$ applied to $x:=1-\mu/\la$, using that  $x\mapsto(1-(1-x)^t)/t$
(defined on $(-\infty,1]$) is convex and thus remains above the tangent
line at $0$. Similarly, we use $x\leq(1-(1-x)^{-t})/(-t)$ if $t<0$.

Now  (\ref{ubop})-(\ref{eq:pierre}) implies (A) with $u:=r$  and $t:=r$, and (B) with $u:=s$ and $t:=1$, since $L_K^{-r}f_\rho\in \L2$ implies $L_K^{-s}f_\rho\in \L2$ for any $s\leq r$. Similarly, (D) (resp. (E)) follows from 
$$L_K^{-1/2}(f_\la-f_\mu)= J_{r-1/2,\la,\mu}L_K^{-r}f_\rho,$$
and (\ref{ubop}) applied to $u:=r-1/2$ and  $t:=u$ (resp. $u:=s-1/2$ and $t:=1$). 

Let us now prove (C): if $r\geq 1/2$, then $f_\rho \in \H_K$, and the first part of equality (\ref{eq:pierre}) implies 
\[ \|f_\la-f_\mu\|_K\leq |\mu-\lambda|\|(L_K+\la I)^{-1}\| \|(L_K+\mu I)^{-1}L_K\| \|L_K^{-1/2}f_\rho\|_\rho \leq \frac{|\mu-\lambda|}{\la}\|f_\rho\|_K. \]
\end{proof}


\section{Upper Bounds for Convergence in $\H_K$} \label{sec:hk}

Throughout this section, we assume
that $L_K^{-r} f_\rho\in \L2$ for some $r\in (1/2,3/2]$, which implies $f_\rho \in \H_K$ with additional regularity, and assume that the sequences $(\g_t)_{t\in\N}$ and $(\la_t)_{t\in\N}$ are chosen in (\ref{choice}).

Our goal is to provide a probabilistic upper bound for
\[ \|f_t - f_\rho \|_K, \]
in order to prove Theorem B. 
We start with the triangle inequality
\[\|f_t - f_\rho\|_K\leq \|f_t - f_{\la_t}\|_K+\|f_{\la_t} - f_\rho\|_K, \]
and apply the reversed martingale decomposition of $(f_t)_{t\in\N}$ developed in Section \ref{sec:general}, Theorem \ref{thm:rmart}:
\begin{equation}  \label{eq:rmart1}
 r_t = \Pi_{1}^t r_0 - \sum_{j=1}^t \gamma_j \Pi_{j+1}^t (A_j \w_j - b_j) - \sum_{j=1}^t \Pi_{j}^t \Delta_j.
\end{equation}
We make use of the corresponding notation of Section \ref{sec:general}, in particular Section \ref{sec:online}, so that 
 $$A_j \w_j - b_j=(L_t+\la_tI)f_{\la_t}-y_tK_{x_t},$$
and
\begin{equation*} \Pi_j^t(x_j,\ldots,x_t)=
\left\{
\begin{array}{lr}
\DS \prod_{i=j}^t \left( I - \gamma_i (L_i + \la_i I) \right) &\tx{ if }j\leq t; \\
I &\tx{ otherwise.}
\end{array}
\right.
\end{equation*}

Now
\[ \|f_t - f_\rho\|_K \leq \Err_{init}(t) + \Err_{samp}(t) + \Err_{drift}(t) + \Err_{approx}(t), \]
where we define the errors as follows:

\noindent (A) \emph{Initial Error}: $\Err_{init}(t)=\|\Pi_1^t r_0\|_K$ comes from the initial choice $f_0$; \\
\noindent (B) \emph{Approximation Error}: $\Err_{approx}(t)=\|f_{\la_t} - f_\rho\|_K$, measures the distance between the regression function and the regularization path at time $t$;\\
\noindent (C) \emph{Drift Error}: $\Err_{drift}(t)= \|\sum_{j=1}^t \Pi_{j}^t \Delta_j\|_K$ comes from the drift along the regularization path $t\mapsto f_{\la_t}$; \\
\noindent (D) \emph{Sample Error}: $\Err_{samp}(t)=\|\sum_{j=1}^t \gamma_j \Pi_{j+1}^t (A_j \w_j - b_j)\|_K$, where $\xi_j =\gamma_j \Pi_{j+1}^t (A_j \w_j - b_j)$
is a reversed martingale difference sequence, reflecting the random fluctuation caused by sampling.

In the remainder of this section, we are going to provide upper bounds for each of the four errors, which, roughly speaking when $ab=1$, are
\begin{eqnarray*}
\Err_{init}(t) & = & O( t^{-1}),  \\
\Err_{approx}(t)  & = & O( t^{-(r-1/2)(1-\theta)} ), \\
 \Err_{drift}(t) & = & O( t^{-(r-1/2)(1-\theta)}), \\
 \Err_{samp}(t) & = & O( t^{\frac{1}{2} - \theta} ).
\end{eqnarray*}

It is not surprising that the approximation error and drift error have the same rate, as both of them come from
the estimates on drifts in Theorem \ref{thm:pie1}. This suggests our explanation that the bias $=\Err_{approx}(t)+ \Err_{drift}(t)$ and the variance $=\Err_{samp}(t)$. Theorem B then follows from these bounds by setting $\theta=2r/(2r+1)$.

\subsection{Initial Error}
\begin{thm}[Initial Error] \label{err:init} Let $\t^\theta\geq a(\ka^2+b)$. Then for all $t\in \N$,
\[ \Err_{init}(t) \leq B_3 \ot^{-ab}, \]
where $B_3 = (\t+1)^{ab}\|r_0\|$.
\end{thm}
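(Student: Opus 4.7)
The plan is to bound $\Err_{\text{init}}(t) = \|\Pi_1^t r_0\|_K$ by separating the operator norm of $\Pi_1^t$ from the deterministic initial residue, and then applying Lemma \ref{lem:onebnd}(B), which is precisely the tool built in Section \ref{sec:general} for the product operators $\Pi_j^t$ arising in the reversed martingale decomposition.

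First I would observe that, since $r_0 = f_0 - f_{\la_0}$ is deterministic, one has $\Err_{\text{init}}(t) \le \|\Pi_1^t\|_{\H_K\to\H_K}\|r_0\|_K$. Next I would verify the hypotheses of Lemma \ref{lem:onebnd}: by (\ref{atest}), $\|A_t\|\le\amax_t=\la_t+\ka^2$ and $\|A_t^{-1}\|\le\amin_t^{-1}=\la_t^{-1}$, so it suffices to check $\g_t\amax_t\le1$ for all $t\in\N$. With the choices $\g_t=a\ot^{-\tet}$ and $\la_t=b\ot^{-(1-\tet)}$, one computes $\g_t\amax_t = ab\ot^{-1} + a\ka^2\ot^{-\tet} \le ab\t^{-1} + a\ka^2\t^{-\tet} \le a(\ka^2+b)\t^{-\tet}$, which is bounded by $1$ under the standing assumption $\t^\tet \ge a(\ka^2+b)$.

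Then I would apply Lemma \ref{lem:onebnd}(B) with $j=1$, using that $\g_t\amin_t = \g_t\la_t = ab\ot^{-1}$, which is exactly of the form $c\ot^{-1}$ with $c=ab$ in the special case of Lemma \ref{lem:onebnd}. This yields
\[
\|\Pi_1^t\|_{\H_K\to\H_K} \le \left(\frac{1+\t}{\ot+1}\right)^{ab} \le (\t+1)^{ab}\,\ot^{-ab},
\]
where the last step uses $\ot+1>\ot$. Multiplying by $\|r_0\|_K$ gives the asserted bound with $B_3=(\t+1)^{ab}\|r_0\|_K$.

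There is no genuine obstacle here; the statement is essentially a direct consequence of the operator bounds already established in Section \ref{sec:general}, together with the explicit choice of $(\g_t,\la_t)$ in (\ref{choice}). The only minor bookkeeping is confirming that the threshold on $\t^\tet$ imposed in the theorem matches the hypothesis of Lemma \ref{lem:onebnd}, which it does because the dominant term in $\g_t\amax_t$ is controlled by $\t^{-\tet}$ rather than $\t^{-1}$.
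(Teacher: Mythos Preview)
Your proposal is correct and follows essentially the same approach as the paper: bound $\|\Pi_1^t r_0\|_K$ by $\|\Pi_1^t\|\|r_0\|$, apply Lemma~\ref{lem:onebnd}(B) with $j=1$ to obtain $\|\Pi_1^t\|\le\left(\frac{\t+1}{\ot+1}\right)^{ab}$, and then relax $\ot+1$ to $\ot$. Your write-up is in fact more careful than the paper's one-line proof in that you explicitly verify the hypothesis $\g_t\amax_t\le1$ from the assumption $\t^\tet\ge a(\ka^2+b)$.
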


\begin{proof}
\[ \Err_{init}(t) \leq \|\Pi_1^t\| \|r_0\| \leq \left(\frac{\t+1}{\ot+1}\right)^{ab} \|r_0\|\leq \left(\frac{\t+1}{\ot}\right)^{ab} \|r_0\| \]
where the second last step uses Lemma \ref{lem:onebnd} (B) with $j=1$.
\end{proof}

\subsection{Approximation Error}

The approximation error is derived from Theorem \ref{thm:pie1}(D) by setting $\la=\la_t$ and $\mu=0$.
\begin{thm}[Approximation Error] \label{err:approx} For $r\in (1/2,3/2]$ and $L_K^{-r}f_\rho \in \L2$,
\[ \|f_{\la_t} - f_\rho \|_K \leq B_1 b^{r-1/2} \ot^{-(r-1/2)(1-\theta)}, \]
where $B_1=(r-1/2)^{-1}\| L^{-r}_K f_\rho \|_\rho$.
\end{thm}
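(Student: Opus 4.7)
The plan is to derive this bound as a direct application of Theorem \ref{thm:pie1}(D), specialized to $\lambda = \lambda_t$ and $\mu = 0$.

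First, I would note that Theorem \ref{thm:pie1}(D), which was established in the previous section, provides
\[ \|f_{\lambda_t} - f_\mu\|_K \leq |\lambda_t^{r-1/2} - \mu^{r-1/2}| \frac{\|L_K^{-r} f_\rho\|_\rho}{r - 1/2} \]
for $r \in (1/2, 3/2]$ and $\mu > 0$. Next, I would let $\mu \to 0^+$. Since $r > 1/2$ implies $f_\rho \in \H_K$ (via the isometry $L_K^{1/2}: \L2 \to \H_K$), and since $f_\mu = (L_K + \mu I)^{-1} L_K f_\rho \to f_\rho$ in $\H_K$ as $\mu \to 0$ (by spectral calculus, dominated convergence applied to the spectral expansion on the eigenbasis $(\mu_\alpha, \phi_\alpha)$), the left-hand side converges to $\|f_{\lambda_t} - f_\rho\|_K$. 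The right-hand side is continuous in $\mu$ and converges to $\lambda_t^{r-1/2} \|L_K^{-r} f_\rho\|_\rho / (r-1/2)$, since $r - 1/2 > 0$.

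Combining these gives
\[ \|f_{\lambda_t} - f_\rho\|_K \leq \lambda_t^{r-1/2} \cdot \frac{\|L_K^{-r} f_\rho\|_\rho}{r - 1/2}. \]
Finally, substituting the choice $\lambda_t = b \ot^{-(1-\theta)}$ from (\ref{choice}) yields
\[ \|f_{\lambda_t} - f_\rho\|_K \leq b^{r-1/2} \ot^{-(r-1/2)(1-\theta)} \cdot B_1, \]
which is the claimed bound.

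There is no real obstacle here; the statement is essentially a corollary of Theorem \ref{thm:pie1}(D). The only subtle point is justifying the passage to the limit $\mu \to 0$, which could equivalently be avoided by redoing the argument of Theorem \ref{thm:pie1}(D) directly with $\mu = 0$: one writes $f_{\lambda_t} - f_\rho = -\lambda_t (L_K + \lambda_t I)^{-1} f_\rho$, applies $L_K^{-1/2}$ to pass to the $\H_K$-norm, factors out $L_K^{-r} f_\rho$, and bounds the operator $\lambda_t (L_K + \lambda_t I)^{-1} L_K^{r - 1/2}$ in $\L2$-operator norm by $\lambda_t^{r - 1/2}/(r - 1/2)$ using the spectral bound $\sup_{s > 0} s^{r-1/2} \lambda_t (s + \lambda_t)^{-1} \leq \lambda_t^{r-1/2}/(r-1/2)$, which follows from the same convexity argument used in the proof of Theorem \ref{thm:pie1}.
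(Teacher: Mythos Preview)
Your proposal is correct and takes essentially the same approach as the paper: the paper's proof is a single sentence stating that the result follows from Theorem~\ref{thm:pie1}(D) with $\lambda = \lambda_t$ and $\mu = 0$. Your limiting argument $\mu \to 0^+$ is unnecessary, since Theorem~\ref{thm:pie1} is stated for $\lambda > \mu \geq 0$ and therefore already covers $\mu = 0$ directly; otherwise your reasoning and the substitution of $\lambda_t = b\,\ot^{-(1-\theta)}$ are exactly what the paper intends.
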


\subsection{Drift Error}

\begin{thm}[Drift Error] \label{err:drift} Let $\t^\theta \geq [a(\kappa^2+b)\vee 1]$. Then for $r\in (1/2,3/2]$ and $L_K^{-r}f_\rho \in \L2$,
\[ \Err_{drift}(t) \leq 
\left\{
\begin{array}{lr}
\displaystyle B_2 b^{r-1/2}\ot^{-(r-1/2)(1-\theta)}, & \mbox{ if } ab>(r-1/2)(1-\tet); \\
\displaystyle B_2 b^{r-1/2}\ot^{-ab}, & \mbox{ if } ab<(r-1/2)(1-\tet),
\end{array}
\right.
\]
where $\DS B_2=\frac{4(1-\theta)}{|ab-(r-1/2)(1-\theta)|}\|L_K^{-r} f_\rho\|_\rho$.
\end{thm}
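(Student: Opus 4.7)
The plan is to bound $\Err_{drift}(t)=\|\sum_{j=1}^t \Pi_j^t \Delta_j\|_K$ by the triangle inequality $\Err_{drift}(t)\le\sum_{j=1}^t\|\Pi_j^t\|\cdot\|\Delta_j\|_K$, and then control the two factors using results already in hand. Under the hypothesis $\t^\tet\ge a(\ka^2+b)\vee 1$, the displayed formula at the end of Lemma \ref{lem:onebnd} applies with $c=ab$ (since $\g_t\la_t=ab/\ot$), giving $\|\Pi_j^t\|\le((j+\t)/(\ot+1))^{ab}$. Simultaneously, $\la_{j-1}>\la_j>0$ and $r\in(1/2,3/2]$, so Theorem \ref{thm:pie1}(D) applied with $\la:=\la_{j-1}$, $\mu:=\la_j$ yields
\[
\|\Delta_j\|_K\le\frac{\la_{j-1}^{r-1/2}-\la_j^{r-1/2}}{r-1/2}\,\|L_K^{-r}f_\rho\|_\rho.
\]
Setting $\ga:=(r-1/2)(1-\tet)$ and using $\la_j^{r-1/2}=b^{r-1/2}(j+\t)^{-\ga}$, the mean value theorem on $s\mapsto s^{-\ga}$ over $[j-1+\t,j+\t]$ bounds the numerator by $\ga\,b^{r-1/2}(j-1+\t)^{-\ga-1}$, so $\|\Delta_j\|_K\le(1-\tet)\,b^{r-1/2}\,\|L_K^{-r}f_\rho\|_\rho\,(j-1+\t)^{-\ga-1}$.

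Assembling these estimates and using the doubling $j+\t\le 2(j-1+\t)$ (valid for $j\ge 2$, with the $j=1$ boundary term treated separately and absorbed into the prefactor), I would arrive at an inequality of the form
\[
\Err_{drift}(t)\le\frac{c_0\,b^{r-1/2}\,(1-\tet)\,\|L_K^{-r}f_\rho\|_\rho}{(\ot+1)^{ab}}\sum_{j=1}^t(j-1+\t)^{ab-\ga-1},
\]
for an absolute constant $c_0$ I expect to collapse into the $4$ of $B_2$ after collecting boundary adjustments. The remaining sum bifurcates according to the sign of $ab-\ga$. If $ab>\ga$, comparison with $\int_0^t(x+\t)^{ab-\ga-1}\,dx\le\ot^{ab-\ga}/(ab-\ga)$ and cancellation with $(\ot+1)^{-ab}$ give a bound of order $\ot^{-\ga}/(ab-\ga)$, the first announced rate. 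If $ab<\ga$, the series converges and is bounded by $\t^{ab-\ga}/(\ga-ab)\le 1/(\ga-ab)$ since $\t^\tet\ge 1$ forces $\t\ge 1$ (for $\tet>0$); combined with $(\ot+1)^{-ab}$ this yields $\ot^{-ab}/(\ga-ab)$, the second rate. Either way the $|ab-\ga|^{-1}$ prefactor supplies the denominator of $B_2$.

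The main technical nuisance, rather than any conceptual obstacle, is tracking constants uniformly through both regimes and absorbing the $j=1$ boundary term without inflating the prefactor as $ab\uparrow\ga$ or $ab\downarrow\ga$; the mean value step cleanly isolates the drift and the integral comparisons are routine. Conceptually the dichotomy instantiates the shadowing-versus-fluctuation phase transition discussed in Section~\ref{sec:main}: when $\Pi_j^t$ contracts fast enough ($ab>\ga$) only the most recent drifts matter and $\Err_{drift}$ matches the approximation error rate $\ot^{-\ga}$, whereas when $ab<\ga$ the earliest drifts are not damped enough and $\Err_{drift}$ inherits the contraction rate $\ot^{-ab}$ of $\Pi_1^t$.
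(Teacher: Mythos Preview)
Your proposal is correct and follows essentially the same route as the paper: triangle inequality, the operator bound $\|\Pi_j^t\|\le((j+\t)/(\ot+1))^{ab}$ from Lemma~\ref{lem:onebnd}, Theorem~\ref{thm:pie1}(D) together with the mean value theorem to control $\|\Delta_j\|_K$, and then an integral comparison splitting on the sign of $ab-(r-1/2)(1-\tet)$. One minor simplification: since $\t\ge1$, the inequality $j+\t\le 2(j-1+\t)$ already holds for all $j\ge1$, so no separate treatment of the $j=1$ term is needed; the factor $4$ in $B_2$ comes entirely from $(j-1+\t)^{-\ga-1}\le 2^{\ga+1}(j+\t)^{-\ga-1}\le4(j+\t)^{-\ga-1}$ (using $\ga\le1$), exactly as in the paper.
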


\begin{proof} We are going to provide an upper bound of
\[ \Err_{drift}(t) = \| \sum_{j=1}^t \Pi_j^t \Delta_j\|_K. \]

First, Lemma \ref{lem:onebnd} implies, using (\ref{atest}), that for all $j$, $t$ $\in\N$, $t\ge j$, 
\begin{equation} \label{eq:pibound}
\|\Pi_{j}^t\|\leq \left(\frac{j+\t}{\ot+1}\right)^{ab}, 
\end{equation}
if $t_0^\tet\geq a(\ka^2+b)$ (and, therefore, $\g_t\amax_t=\g_t\la_t+\g_t\ka^2\le abt_0^{-1}+a\ka^2t_0^{-\tet}\leq1$). 

Second, by Theorem \ref{thm:pie1}(D), 
\begin{eqnarray} 
\|\Delta_t\|_K & = & \|f_{\lambda_t}-f_{\lambda_{t-1}}\|_K  \leq  \left|\la_t^{r-1/2}-\la_{t-1}^{r-1/2} \right| \frac{\|L_K^{-r} f_\rho\|_\rho}{r-\frac{1}{2}} \nonumber \\
&\leq & b^{r-1/2} (1-\theta)  (\ot-1)^{-(r-1/2)(1-\theta)-1} \|L_K^{-r} f_\rho\|_\rho, \label{eq:meanvaluethm}
\end{eqnarray}
where we use
\begin{eqnarray}
|\lambda_{t}^{r-1/2} - \lambda_{t-1}^{r-1/2} | & = & b^{r-1/2} \left| \ot^{-(r-1/2)(1-\theta)} - (\ot-1)^{-(r-1/2)(1-\theta)}\right| \nonumber \\
& \leq &  b^{r-1/2} (r-1/2)(1-\theta) (\ot-1)^{-(r-1/2)(1-\theta)-1},   \nonumber 
\end{eqnarray}
due to the Mean Value Theorem with $h(x)=x^{-(r-1/2)(1-\theta)}$ and $h^\prime(x) = -(r-1/2)(1-\theta) x^{-(r-1/2)(1-\theta)-1}$, such that
\[ |h\ot-h(\ot-1)|=|h^\prime(\eta)|\leq |h^\prime(\ot-1)|, \ \ \ \ \ \mbox{for some $\eta\in(\ot-1,\ot)$}. \]

Now combining (\ref{eq:pibound}) and (\ref{eq:meanvaluethm}) gives 
\begin{eqnarray*}
\Err_{drift}(t) &=& \| \sum_{j=1}^t \Pi_j^t \Delta_j\|_K  \leq b^{r-1/2} (1-\tet) \|L_K^{-r} f_\rho\|_\rho \cdot \sum_{j=1}^t \left(\frac{j+\t}{\ot+1}\right)^{ab} (j+\t -1)^{-(r-1/2)(1-\tet)-1} \\
& \leq & \frac{4b^{r-1/2}(1-\theta)\|L_K^{-r} f_\rho\|_\rho}{(\ot+1)^{ab}}\sum_{j=1}^t (j+\t)^{ab-1-(r-1/2)(1-\theta)}.  
\end{eqnarray*}
It suffices to bound 
\[ \sum_{j=1}^t (j+\t)^{ab-1-(r-1/2)(1-\theta)} \leq \int_0^{t+1} (x+\t)^{ab-1-(r-1/2)(1-\theta)} dx  =: I_t\]
Now, if $ab>(r-1/2)(1-\tet)$,  then
\[ 
I_t  \leq \frac{(\ot+1)^{ab-(r-1/2)(1-\theta)}}{ab-(r-1/2)(1-\theta)};
\]
whereas $ab<(r-1/2)(1-\tet)$ implies
\[ 
I_t \leq \frac{\t^{ab-(r-1/2)(1-\tet)}}{(r-1/2)(1-\theta)-ab}  \leq \frac{1}{|ab-(r-1/2)(1-\theta)|}, \ \ \ \t\geq 1.
\]
\end{proof}

\subsection{Sample Error}

\begin{thm} [Sample Error] \label{err:samp}
Assume that $\t^\theta\geq [a(\kappa^2+b)\vee b \vee 1]$ and $ab \neq \tet-1/2$ or $(3\tet-1)/2$. Then,  with probability
at least $1-\delta$ ($\delta\in (0,1)$),
\[ \Err_{samp}(t) \leq B_4  ab^{-1/2}\ot^{-\left[ab \wedge \frac{3\tet-1}{2}\right]} + B_5 a\ot^{-[ab \wedge (\theta-1/2)]}  \]
where $\DS B_4=\frac{2(\ka+1)^2 \M}{3} \log\frac{2}{\delta} $ and $\DS B_5 = \frac{8\ka \M}{\sqrt{|ab-(\tet-1/2)|}}\log\frac{2}{\delta}$ .
\end{thm}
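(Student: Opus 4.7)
The plan is to apply the Pinelis--Bernstein inequality for Hilbert-valued martingales (Appendix A) to the reverse martingale decomposition
$$\Err_{samp}(t) = \left\| \sum_{j=1}^t \xi_j \right\|_K, \qquad \xi_j := \g_j \Pi_{j+1}^t(A_j \w_j - b_j),$$
where $(\xi_j)_{1\le j \le t}$ is a reverse martingale difference sequence with respect to $\F'_j := \sigma(z_j,\ldots,z_t)$, as noted in Section \ref{sec:general}. With probability at least $1-\de$ this yields
$$\Err_{samp}(t) \le \tfrac{2}{3} c_t \log\tfrac{2}{\de} + \sqrt{2 V_t \log\tfrac{2}{\de}}, \qquad c_t := \max_{1 \le j \le t}\|\xi_j\|_K, \quad V_t := \sum_{j=1}^t \Es[\|\xi_j\|_K^2 \mid \F'_{j+1}],$$
the variance term being put in the form of the theorem via $\sqrt{\log(2/\de)} \le \log(2/\de)$. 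The proof thus reduces to deterministic estimates $c_t \lesssim ab^{-1/2}\, \ot^{-[ab \wedge (3\tet-1)/2]}$ and $\sqrt{V_t} \lesssim a\, \ot^{-[ab \wedge (\tet-1/2)]}$ with the right constants.

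For the pointwise bound driving $c_t$, I would assemble three ingredients. First, the operator estimate $\|\Pi_{j+1}^t\|_K \le (\ot_{j+1}/\ot_{t+1})^{ab}$ from Lemma \ref{lem:onebnd}, valid under $\t^\tet \ge a(\ka^2 + b)$. Second, the variational inequality $\la \|f_\la\|_K^2 \le \Err(f_\la) + \la\|f_\la\|_K^2 \le \Err(0) \le \M^2$, obtained by comparing the regularised energy at $f_\la$ and at $0$, which gives $\|f_\la\|_K \le \M/\sqrt{\la}$ and, via the reproducing property, $|f_\la(x)| \le \ka \M/\sqrt{\la}$. Third, the pointwise identity $A_j\w_j - b_j = (f_{\la_j}(x_j)-y_j) K_{x_j} + \la_j f_{\la_j}$ combined with $|y_j|\le\M$ and $\la_j \le 1$ (ensured by the hypothesis on $\t$) yields $\|A_j\w_j - b_j\|_K \le (\ka+1)^2 \M / \sqrt{\la_j}$. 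Substituting $\g_j = a/\ot_j^\tet$ and $\sqrt{\la_j} = \sqrt{b}\,\ot_j^{-(1-\tet)/2}$ produces an expression of the form $\tfrac{a(\ka+1)^2\M}{\sqrt{b}}\,\ot_j^{ab + (1-3\tet)/2}/\ot_{t+1}^{ab}$, whose maximum over $j$ is attained at $j=t$ when $ab \ge (3\tet-1)/2$ and at $j=1$ otherwise; feeding this into the $\tfrac{2}{3}\log(2/\de)$ prefactor recovers the first term of the theorem.

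For the conditional variance $V_t$, I would use that $\Pi_{j+1}^t$ is $\F'_{j+1}$-measurable while $A_j\w_j - b_j$ depends only on $z_j$ with mean zero, so $\Es[\|\xi_j\|_K^2 \mid \F'_{j+1}] \le \g_j^2 \|\Pi_{j+1}^t\|_K^2\, \Es\|A_j\w_j - b_j\|_K^2$. Squaring the same decomposition and using $\Err(f_{\la_j}) \le \M^2$ together with $\la_j \|f_{\la_j}\|_K^2 \le \M^2$ from the variational inequality produces a uniform second-moment bound $\Es\|A_j\w_j - b_j\|_K^2 \le 4 \ka^2 \M^2$. Summing then gives
$$V_t \le 4\ka^2\M^2 a^2\, \ot_{t+1}^{-2ab} \sum_{j=1}^t \ot_j^{2ab-2\tet},$$
and integral comparison of the remaining sum, split according to the sign of $ab - (\tet - 1/2)$, delivers $\sqrt{V_t} \asymp \ka\M a\, \ot^{-[ab \wedge (\tet - 1/2)]}/\sqrt{|ab - (\tet-1/2)|}$, matching the $B_5$ term after multiplying by $\sqrt{2}\log(2/\de)$.

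The main obstacle is purely bookkeeping rather than conceptual: one must carry all constants through explicitly to recover exactly $B_4$ and $B_5$, track the two case splits $ab \gtrless (3\tet-1)/2$ and $ab \gtrless \tet-1/2$ in parallel, and exclude their boundary values that would force a logarithmic term in the integral comparison. The assumption $\t^\tet \ge a(\ka^2+b) \vee b \vee 1$ is invoked to ensure $\g_j \amax_j \le 1$ so that Lemma \ref{lem:onebnd} applies, to absorb the lower-order factors $\sqrt{\la_j}$ and $\la_j$ into the $\ka$-dependent constants, and to tame the factors $(\ot_{j+1}/\ot_j)^{ab}$ arising from shifting indices in the operator bound.
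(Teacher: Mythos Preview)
Your proposal is correct and follows essentially the same approach as the paper: apply the Pinelis--Bernstein inequality to the reverse martingale differences $\xi_j=\gamma_j\Pi_{j+1}^t(A_j\w_j-b_j)$, bound $\|\Pi_{j+1}^t\|$ via Lemma~\ref{lem:onebnd}, and control $\|A_j\w_j-b_j\|_K$ and its second moment by $(\kappa+1)^2\M/\sqrt{\la_j}$ and $4\kappa^2\M^2$ respectively, then split cases by the sign of $ab-(\tet-1/2)$ and $ab-(3\tet-1)/2$. The only minor difference is in the second-moment bound: the paper rewrites $A_j\w_j-b_j=(L_j-L_K)f_{\la_j}+L_Kf_\rho-y_jK_{x_j}$ and uses $\|f_\la\|_\rho\le\M$ to obtain exactly $4\kappa^2\M^2$, whereas your direct variational route yields $2(\kappa^2+\la_j)\M^2\le2(\kappa^2+1)\M^2$, which is of the same order but does not reproduce the stated constant $B_5$ when $\kappa<1$.
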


The proof of Theorem \ref{err:samp} requires some auxilary estimates.

\begin{lem} \label{lem:xi} Let $A_t \w_t - b_t=(f_{\lambda_t}(x_t) - y_t) K_{x_t} + \lambda_t f_{\lambda_t}$.

(A) $\|A_t \w_t - b_t\|_K\leq (\ka +1)^2 M_\rho/\sqrt{\lambda_t}$, if $\t^{1-\tet}\ge b$;

(B) $\E[\|A_t \w_t - b_t\|_K^2] \leq 4 \kappa^2 M_\rho^2$.
\end{lem}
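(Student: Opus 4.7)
The strategy for both parts is to work from the decomposition $A_t\w_t - b_t = (f_{\lambda_t}(x_t) - y_t) K_{x_t} + \lambda_t f_{\lambda_t}$ (recalling that $\w_t = f_{\lambda_t}$), applying the triangle inequality (respectively a centering trick in (B)) together with the reproducing property $|f(x)| \leq \kappa\|f\|_K$, the finiteness bounds $\|K_{x_t}\|_K \leq \kappa$ and $|y_t|\leq M_\rho$. The main ingredient common to both parts is a control of $\|f_{\lambda_t}\|_K$. To obtain this, I expand $f_\rho = \sum_\alpha a_\alpha \phi_\alpha$ in $\L2$, use the formula $f_{\lambda_t} = (L_K+\lambda_tI)^{-1}L_Kf_\rho$ and the fact that $\|\phi_\alpha\|_K = 1/\sqrt{\mu_\alpha}$ (from the isometry $L_K^{1/2}:\L2\to\H_K$) to obtain
\[
\|f_{\lambda_t}\|_K^2 = \sum_\alpha \frac{\mu_\alpha}{(\mu_\alpha+\lambda_t)^2} a_\alpha^2.
\]
Since $\sup_{\mu\geq 0}\mu/(\mu+\lambda_t)^2 = 1/(4\lambda_t)$ and $\|f_\rho\|_\rho\leq M_\rho$, this yields the key auxiliary estimate $\|f_{\lambda_t}\|_K \leq M_\rho/(2\sqrt{\lambda_t})$.

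For (A), triangle inequality combined with $|f_{\lambda_t}(x_t) - y_t| \leq \kappa\|f_{\lambda_t}\|_K + M_\rho$ gives
\[
\|A_t\w_t - b_t\|_K \leq (\kappa^2 + \lambda_t)\|f_{\lambda_t}\|_K + \kappa M_\rho \leq \frac{(\kappa + \sqrt{\lambda_t})^2 M_\rho}{2\sqrt{\lambda_t}},
\]
after substituting the auxiliary estimate and factoring. The hypothesis $\t^{1-\theta}\geq b$ forces $\lambda_t = b/\ot^{1-\theta}\leq 1$, hence $(\kappa+\sqrt{\lambda_t})^2\leq(\kappa+1)^2$, and the claimed bound follows (with a factor-of-two slack).

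For (B), I would exploit the fact that the optimality condition $(L_K+\lambda_tI)f_{\lambda_t} = L_Kf_\rho$ gives $\E[(f_{\lambda_t}(x_t)-y_t)K_{x_t}] = L_Kf_{\lambda_t} - L_Kf_\rho = -\lambda_t f_{\lambda_t}$, so setting $X := (f_{\lambda_t}(x_t)-y_t)K_{x_t}$ one has $A_t\w_t - b_t = X - \E[X]$. Then
\[
\E\|A_t\w_t - b_t\|_K^2 \leq \E\|X\|_K^2 \leq \kappa^2\E[(f_{\lambda_t}(x_t)-y_t)^2] = \kappa^2\Err(f_{\lambda_t}).
\]
The minimizing property of $f_{\lambda_t}$, $\Err(f_{\lambda_t}) + \lambda_t\|f_{\lambda_t}\|_K^2 \leq \Err(0) = \E[y^2] \leq M_\rho^2$, supplies $\Err(f_{\lambda_t})\leq M_\rho^2$, yielding even the sharper bound $\kappa^2 M_\rho^2$, a fortiori the claimed $4\kappa^2 M_\rho^2$. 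There is no real obstacle once the spectral estimate $\|f_{\lambda_t}\|_K \leq M_\rho/(2\sqrt{\lambda_t})$ is in place; both parts then reduce to straightforward algebraic manipulations of the decomposition.
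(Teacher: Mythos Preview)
Your proof is correct. For part (A) your route is essentially the paper's: both use the triangle inequality together with a bound $\|f_{\lambda_t}\|_K \leq M_\rho/\sqrt{\lambda_t}$ and then $\lambda_t\le 1$. The only cosmetic difference is that you derive the $\|f_{\lambda_t}\|_K$ bound spectrally (gaining an irrelevant factor of $1/2$), whereas the paper cites the variational characterization of $f_\lambda$ (Lemma~\ref{lem:fla}(A)).

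For part (B) your argument is genuinely different and in fact cleaner. The paper rewrites $A_t\w_t-b_t=(L_t-L_K)f_{\lambda_t}+(L_Kf_\rho-y_tK_{x_t})$, applies $(u+v)^2\le 2(u^2+v^2)$, and then uses the variance inequality on each of the two centered pieces together with $\|f_{\lambda_t}\|_\rho\le M_\rho$, arriving at exactly $4\kappa^2 M_\rho^2$. You instead observe directly that the whole expression equals $X-\E[X]$ with $X=(f_{\lambda_t}(x_t)-y_t)K_{x_t}$, so a single application of $\E\|X-\E X\|^2\le\E\|X\|^2$ followed by $\Err(f_{\lambda_t})\le\Err(0)\le M_\rho^2$ gives the sharper constant $\kappa^2 M_\rho^2$. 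Your route avoids the $2(u^2+v^2)$ splitting and the separate $\|f_{\lambda_t}\|_\rho$ estimate; the paper's route has the minor advantage of isolating the two sources of fluctuation (in $L_t$ and in $y_t$), but at the cost of a factor $4$.
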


\begin{proof}
(A) Using $\|f_\lambda\|_K \leq M_\rho/\sqrt{\lambda}$ in Lemma \ref{lem:fla}(A),
\[ \|A_t \w_t - b_t\| \leq \|f_{\lambda_t}(x_t) K_{x_t}\|_K + |y_t| \| K_{x_t}\|_K + \lambda_t \|f_{\la_t}\|_K \leq M_\rho \kappa^2 /\sqrt{\lambda_t} + M_\rho \kappa + M_\rho \sqrt{\lambda_t} \]
since $\|f_{\lambda_t}(x_t) K_{x_t}\|_K = |\<f_{\lambda_t}, K_{x_t}\>|\| K_{x_t}\|_K  \leq \|f_{\lambda_t}\|_K \|K_{x_t}\|_K^2
\leq M_\rho \kappa^2 /\sqrt{\lambda_t}$.
Now,
\begin{eqnarray*}
M_\rho \kappa^2 /\sqrt{\lambda_t} + M_\rho \kappa + M_\rho \sqrt{\lambda_t} & \leq & (\kappa^2+\ka +1) M_\rho/\sqrt{\lambda_t} \leq (\ka +1)^2 M_\rho/\sqrt{\lambda_t}
\end{eqnarray*}
where the second last inequality is due to $\t^{1-\tet} \ge b \Rightarrow \la_t \leq 1$. 

(B) Using $\lambda_t f_{\la} = L_K f_\rho - L_K f_{\la}$ we obtain
\[ (f_{\lambda_t}(x_t) - y_t) K_{x_t} + \lambda_t f_{\la_t} = (L_t - L_K)f_{\la_t} +L_K f_\rho- y_t K_{x_t}.   \]
\begin{eqnarray*}
\E[\|A_t \w_t - b_t\|^2] & = &\E \|(L_t - L_K)f_{\la_t} +L_K f_\rho- y_t K_{x_t}\|_K^2 \\
& \leq & 2\E[\|(L_t - L_K)f_{\la_t}\|_K^2 + \|L_K f_\rho- y_t K_{x_t}\|_K^2] \\
& \leq & 2\E[\|L_t f_{\la_t}\|_K^2 +  \|y_t K_{x_t}\|_K^2] \leq 2 \kappa^2 (\|f_{\la_t}\|_\rho^2 + M^2_\rho) = 4 \kappa^2 M^2_\rho
\end{eqnarray*}
since $\E[L_t] = L_K$, $\E[y_t K_{x_t}] = L_K f_\rho$ and $\|f_\la\|_\rho \leq \M$ by Lemma \ref{lem:fla}(B).
\end{proof}

Now we are ready to give the proof of the sample error bounds, Theorem \ref{err:samp}.

\begin{proof}[Proof of Theorem \ref{err:samp}] We are going to bound 
\[ \Err_{samp}(t) = \left\| \sum_{j=1}^t \xi_j \right\|_K \]
where $\xi_j = \ga_j\Pi_{j+1}^t ( A_j \w_j - b_j)$ is a reversed martingale difference sequence. To apply the Pinelis-Bernstein inequality in Proposition \ref{prop:bernstein}, we need bounds on $\|\xi_j\|_K$ and $\E_{j+1} \|\xi_j\|_K^2$ where $\E_{j+1}[\cdot]$ is the expectation conditional on examples after time $j$.

Notice that for $\t^\tet \geq a(\ka^2+b)$ and $j\geq 1$, using $1+x\le e^x$ for all $x\in\R$, 
\[ \|\ga_j \Pi_{j+1}^t \| \leq \frac{a}{(j+\t)^\theta} \left(\frac{j+\t+1}{\ot+1}\right)^{ab}\leq \frac{a (j+\t)^{ab-\theta}}{(\ot+1)^{ab}}
(1+t_0^{-1})^{ab}\leq\frac{ea (j+\t)^{ab-\theta}}{(\ot+1)^{ab}},\]
where $e$ is the Euler constant.

Now Lemma \ref{lem:xi} (B) implies
\[  \E[\|A_t \w_t - b_t\|_K^2] \leq 4 \kappa^2 \M^2. \]
Hence 
\[
\E_{j+1}\|\xi_j\|^2  \leq  \frac{4e^2 (a\ka \M)^2 (j+\t)^{2ab-2\theta}}{(\ot+1)^{2ab}} ,
\]
so that, if $t_0\ge2$,  
\begin{eqnarray}
\sum_{j=1}^t \E_{j+1}\|\xi_j\|^2 & \leq & \left\{ 
\begin{array}{lr}
\DS  \frac{2e^2 (a\ka \M)^2 }{ab-(\tet-1/2)}  (\ot+1)^{-2(\tet-1/2)}, & \mbox{ if $ab>\tet-1/2$} \\
\DS \frac{2e^2 (a\ka \M)^2 }{(\tet-1/2)-ab}  (\ot+1)^{-2ab}, &  \mbox{ if $ab<\tet-1/2$} 
\end{array}
\right. \label{eq:sigmaBound}
\end{eqnarray}

On the other hand, if $\t^{1-\tet} \ge b$, Lemma \ref{lem:xi} (A) implies
 \[ \| A_j \w_j - b_j\|_K \leq (\ka+1)^2 \M /\sqrt{\la_j}, \]
 whence 
\begin{eqnarray}
\|\xi_j\|_K & \leq & \frac{ea(\ka+1)^2 \M}{\sqrt{b}}\cdot \frac{(j+\t)^{ab-(3\tet-1)/2}}{(\ot+1)^{ab}} \nonumber \\
& \leq & 
\left\{ 
\begin{array}{lr}
\DS  \frac{ea(\ka+1)^2 \M}{\sqrt{b}} \ot^{-(3\tet-1)/2} , & \mbox{ if $ab\geq (3\tet-1)/2$} \\
\DS \frac{ea(\ka+1)^2 \M}{\sqrt{b}} \ot^{-ab}, &  \mbox{ if $ab\leq (3\tet-1)/2$} 
\end{array}
\right. \label{eq:MBound}
\end{eqnarray}  
The final bound is obtained by Pinelis-Bernstein inequality in Proposition \ref{prop:bernstein} with upper bounds (\ref{eq:sigmaBound}) and (\ref{eq:MBound}). 
\end{proof}

\subsection{Proof of Theorem B}
We choose $\tet=2r/(2r+1)$, $a\ge1$, $b\le1$ such that $ab=1$, and assume $t_0^\tet\ge a\ka^2+1$. Using Theorems \ref{err:approx}, \ref{err:drift}, \ref{err:init}, and \ref{err:samp},
\beq
\|f_t - f_\rho\|_K &\leq &\Err_{init}(t)+\Err_{approx}(t) + \Err_{drift}(t)+\Err_{samp}(t) \\
&\leq & B_3\ot^{-ab}+[(B_1+B_2)a^{1/2-r}+(B_4\t^{-\tet}\sqrt{a}+B_5)a]\ot^{-(2r-1)/(4r+2)}
\eeq

Note that, by Lemma \ref{lem:xi}(A) with $f_0=0$, 
 \[ B_3 = (\t+1) \|r_0\|=(\t+1)\|f_{\la_0}\|\leq C_0:=2\t \frac{M_\rho}{\sqrt{\la_0}} =2 \t^{\frac{4r+3}{4r+2}}  M_\rho \]
On the other hand, 
$$C_1:=B_1+B_2 =  \left(\frac{2}{2r-1}+\frac{8}{2r+3}  \right)\| L^{-r}_K f_\rho \|_\rho=\frac{20r-2}{(2r-1)(2r+3)}\| L^{-r}_K f_\rho \|_\rho$$
and, using  $\sqrt{a}t_0^{-\tet}\le\ka^{-1}$ and $t_0\ge1$, 
$$B_4\t^{-\tet}\sqrt{a}+B_5\le\frac{2(\ka+1)^2M_\rho}{3\ka}+\frac{8\ka M_\rho}{\sqrt{3/4}}
\le C_2:=\frac{20 (\ka+1)^2M_\rho}{\ka},$$
which concludes the proof of Theorem B.

\section{Upper Bounds for Convergence in $\L2$} \label{sec:l2}


Throughout this section, we assume  
that $L_K^{-r} f_\rho\in \L2$ for some $r\in [1/2,3/2]$, which  implies $f_\rho \in \H_K$ with additional regularity, and assume the  sequences $(\g_t)_{t\in\N}$ and $(\la_t)_{t\in\N}$ are chosen in (\ref{choice}). Note that the case $r=1/2$ is included here, whereas it was not in Section \ref{sec:hk} and Theorem B.

Our goal is to provide a probabilistic upper bound of 
\[ \|f_t - f_\rho \|_\rho, \]
in  order to prove Theorem C.  
As in Section \ref{sec:hk}, we start with  the triangle inequality
\[\|f_t - f_\rho\|_\rho \leq \|f_t - f_{\la_t}\|_\rho+\|f_{\la_t} - f_\rho\|_\rho, \]
but apply here the martingale decomposition of  $(f_t)_{t\in\N_0}$ in $\L2$ developed in  Theorem \ref{thm:mart} instead:
\[ r_{t} = \PPi_1^t r_0 + \sum_{j=1}^t \gamma_j \PPi_{j+1}^t \chi_j - \sum_{j=1}^t \PPi_{j}^t \Delta_j. \]
We make use of the corresponding notation of Section \ref{sec:general}, in particular \ref{sec:online}, so that 
$$\chi_t=( L_K-L_t ) f_{t-1} +( y_t K_{x_t}-L_K f_\rho ),$$ 
and
\begin{equation}
\PPi_j^t=
\left\{
\begin{array}{lr}
\DS \prod_{i=j}^t \left( I - \gamma_i  (L_K + \la_i I)  \right) &\tx{ if }j\leq t; \\
I  &\tx{ otherwise.}
\end{array}
\right.
\end{equation}

The martingale decomposition enables us to make use of the isometry $L_K^{1/2}:\L2 \to \H_K$, in the sense that
one can benefit from the spectral decomposition of $L_K^{1/2}\PPi_j^t$ to get a tighter estimate. This was not possible with the reversed martingale decomposition, since  $L_K^{1/2} \Pi_j^t$ does not have an obvious spectral decomposition.

Note however that $\chi_t$ depends on $f_{t-1}$, so that we need preliminary estimates of $\|\chi_t\|_\rho$, provided in Appendix B. 

As in Section \ref{sec:hk}, we introduce the following definitions for convenience.

{\bf [Definitions of Errors]}

\noindent (A) \emph{Initial Error}: $\Err_{init}(t)=\|\PPi_1^t r_0\|_\rho $, which reflects the propagation error by the initial choice $f_0$; \\
\noindent (B) \emph{Approximation Error}: $\Err_{approx}(t)=\|f_{\la_t} - f_\rho\|_\rho$, which measures the distance between the regression function and
the regularization path at time $t$;\\
\noindent (C) \emph{Drift Error}: $\Err_{drift}(t)= \|\sum_{j=1}^t \PPi_{j}^t \Delta_j\|_\rho $, which measures the error caused by drifts from
$f_{\la_{j-1}}$ to $f_{\la_j}$ along the regularization path; \\
\noindent (D) \emph{Sample Error}: $\Err_{samp}(t)=\|\sum_{j=1}^t \gamma_j \PPi_{j+1}^t \chi_j \|_\rho $, where $\chi_j$
is a martingale difference sequence, reflecting the random fluctuation caused by sampling.

Our aim is to  bound
\[ \|f_t - f_\rho\|_\rho \leq \Err_{init}(t) + \Err_{samp}(t) + \Err_{drift}(t) + \Err_{approx}(t). \]
In the remainder of this section, we are going to provide upper bounds for each of the four errors, which, roughly speaking when $ab=1$, are
\begin{eqnarray*}
\Err_{init}(t) & = & O( t^{-1})  \\
\Err_{approx}(t)  & = & O( t^{-r(1-\theta)} ) \\
 \Err_{drift}(t) & = & O( t^{-r(1-\theta)}) \\
  \Err_{samp}(t) & = & O( t^{-\theta/2} )
\end{eqnarray*}

This suggests our explanation that the bias $=\Err_{approx}(t)+ \Err_{drift}(t)=O(t^{-r(1-\theta)})$ and the variance $=\Err_{samp}(t)=O(t^{-\theta/2})$ similar to the batch learning setting. Theorem C then follows from these bounds by setting $\theta=2r/(2r+1)$.

\subsection{Initial Error}

\begin{thm}[Initial Error]\label{err1:init} Let $\t^\theta \geq a(\ka^2+b)$. Then for all $t\in \N$,
\[ \Err_{init}(t) \leq B_6 \ot^{-ab}, \]
where $B_6 = \M (\t+1)^{ab}$.
\end{thm}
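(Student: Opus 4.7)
The plan is to essentially mirror the argument for Theorem \ref{err:init} (the $\H_K$-initial error bound), replacing the random operator $\Pi_1^t$ by the deterministic operator $\PPi_1^t$ and the $\H_K$-norm by the $\L2$-norm. The first step is the straightforward factoring
\[
\Err_{init}(t) = \|\PPi_1^t r_0\|_\rho \leq \|\PPi_1^t\|_{\L2\to\L2}\, \|r_0\|_\rho.
\]

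Next I would bound the operator norm. Since $\A_i = L_K + \la_i I$ is positive self-adjoint, Lemma \ref{lem:onebnd} applies directly (with $\amin_i = \la_i$ and $\amax_i = \ka^2 + \la_i$); the hypothesis $\t^\theta \geq a(\ka^2 + b)$ gives $\g_i \amax_i \leq 1$, exactly as in the derivation of (\ref{bdaht}). Crucially, (\ref{bdaht}) is explicitly stated to hold for the $\L2\to\L2$ operator norm as well as the $\H_K\to\H_K$ one (this is because $\PPi_j^t$ commutes with $L_K^{1/2}$, which is an isometry between the two spaces, so the spectral bound transfers). Thus
\[
\|\PPi_1^t\|_{\L2 \to \L2} \leq \left(\frac{1+t_0}{\ot+1}\right)^{ab} \leq \left(\frac{\t+1}{\ot}\right)^{ab}.
\]

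Finally, I need $\|r_0\|_\rho \leq \M$. With $f_0 = 0$ we have $r_0 = -f_{\la_0}$, and Lemma \ref{lem:fla}(B) (the same estimate already invoked in the proof of Lemma \ref{lem:xi}(B)) gives $\|f_{\la_0}\|_\rho \leq \M$. Multiplying the two bounds yields $\Err_{init}(t) \leq \M(\t+1)^{ab}\,\ot^{-ab} = B_6\,\ot^{-ab}$, which is the claim.

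There is essentially no obstacle here: the only subtle point is verifying that the $\H_K$-derived bound (\ref{bdaht}) is still valid in the $\L2\to\L2$ norm, which follows immediately from the self-adjointness of $L_K + \la_i I$ and the functional calculus. The improvement over Theorem \ref{err:init} (where the constant involved $\|r_0\|_{\H_K}$) comes from the fact that $\L2$-norm control of $f_{\la_0}$ is uniformly bounded by $\M$, independent of $\la_0$, via Lemma \ref{lem:fla}(B).
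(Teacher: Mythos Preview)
Your proof is correct and follows essentially the same approach as the paper: factor $\|\PPi_1^t r_0\|_\rho \le \|\PPi_1^t\|\,\|r_0\|_\rho$, bound the operator norm via Lemma~\ref{lem:onebnd}(B) (equivalently \eqref{bdaht}) under the hypothesis $\t^\theta\ge a(\ka^2+b)$, and then use $f_0=0$ together with Lemma~\ref{lem:fla}(B) to get $\|r_0\|_\rho=\|f_{\la_0}\|_\rho\le\M$.
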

\begin{proof}
Lemma \ref{lem:onebnd}(B) with $j=1$ and \eqref{atest} imply that , if $\t^\theta \geq a(\ka^2+b)$, 
\[ \Err_{init}(t) \leq \|\PPi_1^t\|\|r_0\| \leq \left(\frac{\t+1}{\ot+1} \right)^{ab}\|r_0\|. \]
For $f_0=0$, using Lemma \ref{lem:fla}(B), $\|r_0 \|_\rho = \|f_{\la_0}\|_\rho \leq \M$.
\end{proof}

\subsection{Approximation Error}

\begin{thm}[Approximation Error]\label{err1:approx} For $r\in (0,1]$ and $L_K^{-r}f_\rho \in \L2$,
\[ \Err_{approx}(t)\leq B_7 b^r \ot^{-r(1-\theta)}, \]
where $B_7=r^{-1}\| L^{-r}_K f_\rho \|_\rho$.
\end{thm}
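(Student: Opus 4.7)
The plan is to recognize this bound as the specialization $\mu=0$ of the drift estimate already established in Theorem \ref{thm:pie1}(A), then substitute the chosen schedule $\la_t=b/\ot^{1-\theta}$. First, I would observe that under the source condition $L_K^{-r}f_\rho\in\L2$ with $r>0$, one has $f_\rho\in\overline{\mathrm{range}(L_K^r)}\subseteq\overline{\mathrm{range}(L_K)}$, so the limit $f_0:=\lim_{\mu\to 0^+}(L_K+\mu I)^{-1}L_K f_\rho$ equals $f_\rho$. Consequently $\Err_{approx}(t)=\|f_{\la_t}-f_\rho\|_\rho=\|f_{\la_t}-f_0\|_\rho$, which is exactly the quantity estimated in Theorem \ref{thm:pie1}(A) at $\la=\la_t$ and $\mu=0$ (and the parameter range $r\in(0,1]$ sits inside $[-1,1]\setminus\{0\}$ as required there).

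Invoking Theorem \ref{thm:pie1}(A) then yields
\[
\|f_{\la_t}-f_\rho\|_\rho \leq \la_t^r\,\frac{\|L_K^{-r}f_\rho\|_\rho}{r}.
\]
Plugging in $\la_t=b/\ot^{1-\theta}$ gives $\la_t^r=b^r\ot^{-r(1-\theta)}$, and the claim follows with $B_7=r^{-1}\|L_K^{-r}f_\rho\|_\rho$.

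For a fully self-contained argument (not requiring a separate justification of $f_0=f_\rho$), I would instead use the algebraic identity
\[
f_\rho-f_{\la_t}=\la_t(L_K+\la_t I)^{-1}f_\rho,
\]
obtained directly from $(L_K+\la_t I)f_{\la_t}=L_K f_\rho$. Writing $f_\rho=L_K^r g$ with $g:=L_K^{-r}f_\rho\in\L2$ and applying the spectral decomposition of the compact self-adjoint operator $L_K$ reduces the estimate to the scalar bound $\sup_{\mu\ge 0}\frac{\la_t\mu^r}{\mu+\la_t}\le \la_t^r$, which is a one-line consequence of Young's inequality $\la_t^{1-r}\mu^r\le r\mu+(1-r)\la_t$ valid for $r\in(0,1]$. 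Either route produces the bound immediately; no significant obstacle is expected, since the only conceptual point is the identification $f_0=f_\rho$, which is standard under the stated source condition.
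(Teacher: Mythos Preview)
Your proposal is correct and matches the paper's proof exactly: the paper's entire argument is the single line ``Follows from Theorem~\ref{thm:pie1}(A) with $\la=\la_t$ and $\mu=0$.'' Your additional care in justifying $f_0=f_\rho$ under the source condition, and your alternative spectral-calculus route via $f_\rho-f_{\la_t}=\la_t(L_K+\la_t I)^{-1}f_\rho$, are both sound but go beyond what the paper spells out.
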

\begin{proof}
Follows from Theorem \ref{thm:pie1}(A) with $\la=\la_t$ and $\mu=0$.
\end{proof}

\subsection{Drift Error}

\begin{thm}[Drift Error]\label{err1:drift}
Assume $\t^\theta \geq [a(\kappa^2+b)\vee 1]$. Then, if $r\in (0,1]$ and $L_K^{-r}f_\rho \in \L2$,
\[ \Err_{drift}(t) \leq 
\left\{
\begin{array}{lr}
\displaystyle B_8 b^{r}\ot^{-r(1-\theta)}, & \mbox{ if } ab>r(1-\tet); \\
\displaystyle B_8 b^{r}\ot^{-ab}, & \mbox{ if } ab<r(1-\tet),
\end{array}
\right.
\]
where $\DS B_8=\frac{4(1-\theta)}{|ab-r(1-\theta)|}\|L_K^{-r} f_\rho\|_\rho$.
\end{thm}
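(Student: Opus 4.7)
The proof will mirror the structure of Theorem \ref{err:drift} ($\H_K$ drift error) with two main substitutions: the deterministic operator $\PPi_j^t$ replaces the random $\Pi_j^t$, and the $\L2$-estimate (A) of Theorem \ref{thm:pie1} replaces the $\H_K$-estimate (D). The plan is therefore to bound the operator norms $\|\PPi_j^t\|_{\L2 \to \L2}$, then the drift increments $\|\Delta_j\|_\rho$, plug both bounds in and split the resulting sum into two regimes determined by the sign of $ab - r(1-\theta)$.

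First, I would invoke (\ref{bdaht}) from Section \ref{sec:online}, which, under the hypothesis $\t^\theta \geq a(\kappa^2+b)$, guarantees
\[
\|\PPi_j^t\|_{\L2 \to \L2} \leq \left(\frac{j+\t}{\ot+1}\right)^{ab}.
\]
(This is exactly the analog of (\ref{eq:pibound}); note that $\PPi_j^t$ commutes with $L_K^{1/2}$, so the same bound holds in $\L2$ as in $\H_K$.)
Next, I would apply Theorem \ref{thm:pie1}(A) to $\lambda = \lambda_t$, $\mu = \lambda_{t-1}$, obtaining
\[
\|\Delta_t\|_\rho = \|f_{\lambda_t} - f_{\lambda_{t-1}}\|_\rho \leq \frac{|\lambda_t^r - \lambda_{t-1}^r|}{r}\|L_K^{-r}f_\rho\|_\rho,
\]
and then use the Mean Value Theorem on $h(x) = x^{-r(1-\theta)}$ to yield $|\lambda_t^r - \lambda_{t-1}^r| \leq r(1-\theta)\,b^r (\ot-1)^{-r(1-\theta)-1}$, exactly as in the derivation of (\ref{eq:meanvaluethm}).

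Combining these two estimates, using $(j+\t-1)^{-1} \leq 4(j+\t)^{-1}$ for $\t \geq 1$ (which follows from $\t^\theta \geq 1$), yields
\[
\Err_{drift}(t) \leq \frac{4(1-\theta)\,b^r\,\|L_K^{-r}f_\rho\|_\rho}{(\ot+1)^{ab}} \sum_{j=1}^t (j+\t)^{ab - 1 - r(1-\theta)}.
\]
It then remains to bound the sum by the integral $I_t := \int_0^{t+1}(x+\t)^{ab - 1 - r(1-\theta)}\,dx$ and consider two cases: if $ab > r(1-\theta)$ the integral is at most $(\ot+1)^{ab - r(1-\theta)}/[ab - r(1-\theta)]$, giving the rate $\ot^{-r(1-\theta)}$; if $ab < r(1-\theta)$ the integral is bounded by $\t^{ab - r(1-\theta)}/[r(1-\theta) - ab] \leq 1/|ab - r(1-\theta)|$ (using $\t \geq 1$), giving the rate $\ot^{-ab}$.

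No step is genuinely hard; the proof is a routine transposition of the argument for Theorem \ref{err:drift} from $\H_K$ to $\L2$, with $r - 1/2$ replaced by $r$ everywhere. The only point that deserves care is verifying that the operator-norm bound (\ref{bdaht}) indeed holds in the $\L2\to\L2$ sense (not just $\H_K \to \H_K$); this is immediate because $\PPi_j^t$ is a polynomial in $L_K$, hence diagonal in the eigenbasis $(\phi_\alpha)$ of $L_K$ acting on $\L2$, and the spectral radius bound $1 - \gamma_i \lambda_i$ applies verbatim on each eigenspace.
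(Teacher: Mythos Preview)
Your proposal is correct and follows essentially the same approach as the paper: the paper's own proof consists of the single sentence ``Similar to the proof of Theorem \ref{err:drift}, replacing $r-1/2$ with $r$,'' and you have accurately spelled out what that substitution entails, including the switch from $\Pi_j^t$ to $\PPi_j^t$ and from Theorem \ref{thm:pie1}(D) to (A). The only minor imprecision is that the factor $4$ arises from $(j+\t-1)^{-r(1-\theta)-1}\le 4(j+\t)^{-r(1-\theta)-1}$ (the full exponent, not just $-1$), which holds since $(j+\t)/(j+\t-1)\le 2$ and $r(1-\theta)+1\le 2$.
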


\begin{proof} Similar to the proof of Theorem \ref{err:drift}, replacing $r-1/2$ with $r$. 
\end{proof}

\subsection{Sample Error}
In this section we assume $b=a^{-1}$ for simplicity; this is necessary for the bounds in Appendix B, in particular Corollary \ref{loght}, and is enough to provide the optimal bounds we need (see discussion after statement of Theorem A).
 
\begin{thm}[Sample Error]\label{err1:samp} Assume that $L_K^{-r} f_\rho\in \L2$ for some $r\in [1/2,1]$, $\tet\in[1/2,2/3]$, $ab=1$, $a\ge4$ and $t_0^\tet\ge 2+8\kappa^2a$. 
Then, for all $t\in \N$,  with probability at least $1-\delta$ ($\delta\in (0,1)$)
\[ \Err_{samp}(t)\leq \sqrt{a}B_{9}\frac{\log (2/\de)}{\ot^{\tet/2}}+\left(a^{3/2}B_{10}\sqrt{\log \ot}+a^{5/2}B_{11}\right)\frac{(\log (2/\de))^2}{\ot^{(3\tet-1)/2}},\]
where
$$B_9:=10\ka\M,\,\,\,B_{10}:=63\ka^2\M,\,\,\,\,B_{11}:=50\ka^2\M\t^{1/2-\tet}.$$

\end{thm}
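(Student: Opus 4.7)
The plan is to apply the Pinelis--Bernstein inequality for Hilbert-space martingales (Proposition \ref{prop:bernstein}) to the martingale difference sequence $\xi_j := \gamma_j \PPi_{j+1}^t \chi_j$ in $\L2$. This requires an almost-sure bound $B$ on $\|\xi_j\|_\rho$ and a total conditional-variance bound $\Sigma^2 \geq \sum_{j=1}^t \E_{j-1}\|\xi_j\|_\rho^2$. The crucial advantage of the martingale (as opposed to the reversed martingale) decomposition is that the operator $\PPi_{j+1}^t$ is deterministic and a polynomial in $L_K$, hence diagonal in the spectral basis of $L_K$; this is what will deliver the faster $\ot^{-\theta/2}$ variance rate that the reversed decomposition cannot reach.

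First I would reduce $\|\PPi_{j+1}^t \chi_j\|_\rho$ to a spectral supremum. Since $L_K^{1/2}$ commutes with $\PPi_{j+1}^t$ and $\chi_j \in \H_K$, writing $\PPi_{j+1}^t \chi_j = L_K^{1/2}\PPi_{j+1}^t (L_K^{-1/2}\chi_j)$ yields $\|\PPi_{j+1}^t \chi_j\|_\rho \leq \|L_K^{1/2}\PPi_{j+1}^t\|_{\L2\to\L2}\,\|\chi_j\|_K$, and the operator norm factorizes spectrally as
\[ \|L_K^{1/2}\PPi_{j+1}^t\|_{\L2\to\L2} = \sup_{\mu \in \mathrm{spec}(L_K)} \mu^{1/2}\prod_{i=j+1}^t |1-\gamma_i(\mu+\lambda_i)|. \]
Using $1-x\leq e^{-x}$ (legitimate under the assumption $\t^\theta\geq a(\kappa^2+b)$) and setting $s_j^t:=\sum_{i=j+1}^t \gamma_i$, this splits into a $\mu$-dependent factor $\sup_\mu \mu^{1/2}e^{-\mu s_j^t}=(2es_j^t)^{-1/2}$ and a drift-path factor $\exp(-\sum_{i=j+1}^t \gamma_i\lambda_i)\asymp (j+\t)/(\ot+1)$ for $ab=1$. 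For indices $j$ close to $t$, where $s_j^t$ is too small for this to help, I would fall back on the cruder estimate $\|L_K^{1/2}\PPi_{j+1}^t\|\leq \kappa\prod_{i=j+1}^t(1-\gamma_i\lambda_i)$ supplied by Lemma \ref{lem:onebnd}, and take the minimum of the two bounds.

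Second, I would control $\chi_j=(L_K-L_j)f_{j-1}+(y_jK_{x_j}-L_Kf_\rho)$. For the conditional variance, $\E_{j-1}\|\chi_j\|_K^2 \leq 2\kappa^2\|f_{j-1}\|_\rho^2 + 2\kappa^2 M_\rho^2$; for the almost-sure bound, $\|\chi_j\|_K\leq \kappa^2\|f_{j-1}\|_K+\kappa M_\rho+\kappa^2\|f_\rho\|_\rho$. Both require uniform-in-$j$ high-probability bounds on $\|f_{j-1}\|_\rho$ and $\|f_{j-1}\|_K$, provided by the preliminary estimates of Appendix B (specifically Corollary \ref{loght}); each such invocation costs one factor of $\log(2/\delta)$. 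Since the variance bound enters Pinelis--Bernstein with one further $\log(2/\delta)$ this produces the single $\log(2/\delta)$ on $B_9$, whereas the boundedness term picks up a second $\log(2/\delta)$, yielding $(\log(2/\delta))^2$ on $B_{10}$ and $B_{11}$.

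Finally I would substitute everything into Pinelis--Bernstein and integrate the resulting sums under $\gamma_j=a\ot^{-\theta}$, $\lambda_j=a^{-1}\ot^{\theta-1}$: the variance sum $\sum_j \gamma_j^2\|L_K^{1/2}\PPi_{j+1}^t\|^2$ is of order $a\,\ot^{-\theta}$, giving $\Sigma \lesssim \sqrt{a}\,\kappa M_\rho\,\ot^{-\theta/2}$; the maximum $\max_j \gamma_j\|L_K^{1/2}\PPi_{j+1}^t\|\cdot\|\chi_j\|_K$ produces the main term of order $a^{3/2}\kappa^2 M_\rho\,\ot^{-(3\theta-1)/2}\sqrt{\log\ot}$, plus a boundary contribution of order $a^{5/2}\kappa^2 M_\rho\,\t^{1/2-\theta}\ot^{-(3\theta-1)/2}$ coming from the early regime where the Appendix B bound on $\|f_{j-1}\|_K$ behaves like $\lambda_{j-1}^{-1/2}$. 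The main technical obstacle is the spectral-norm bookkeeping across the transition regime $s_j^t\asymp\kappa^{-2}$: recovering the precise exponent $(3\theta-1)/2$ and the logarithmic factor $\sqrt{\log\ot}$ rather than an unnecessary polynomial loss requires a careful choice of the breakpoint between the spectrally optimized bound and the drift-path bound, together with a tight evaluation of the resulting integrals of $\ot^{-\theta}(t-j+\t)^{-1/2}$ type.
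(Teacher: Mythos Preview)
Your overall strategy coincides with the paper's: apply Pinelis--Bernstein to the martingale differences $\gamma_j\PPi_{j+1}^t\chi_j$ in $\L2$, exploit that $\PPi_{j+1}^t$ is a polynomial in $L_K$ to pass to a spectral supremum, and feed in the high-probability bounds on $f_{j-1}$ from Appendix~B (indeed via the decomposition $f_{j-1}=f_\rho+g_{j-1}+h_{j-1}$, which is exactly what Corollary~\ref{loght} controls). The double $\log(2/\delta)$ on the subleading terms arises for the reason you describe.

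The substantive difference is in how you treat the spectral sum governing the variance. You propose to bound each $\|L_K^{1/2}\PPi_{j+1}^t\|$ individually via $\sup_\mu \mu^{1/2}e^{-\mu s_j^t}=(2es_j^t)^{-1/2}$ and then sum over~$j$. But the resulting sum $\sum_j \gamma_j^2(s_j^t)^{-1}\prod_{i>j}(1-\gamma_i\lambda_i)^2$ does \emph{not} stay of order $a\,\ot^{-\theta}$: in the range where $t-j$ runs from roughly $\ot^{\theta}/(a\kappa^2)$ up to $t/2$ one has $s_j^t\asymp a(t-j)\ot^{-\theta}$ and $\prod(1-\gamma_i\lambda_i)\asymp 1$, so the summand behaves like $a/[(t-j)\ot^{\theta}]$ and contributes a factor $\log\ot$. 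Your variance bound therefore becomes $\Sigma\lesssim\sqrt{a}\,\kappa M_\rho\,\sqrt{\log\ot}\,\ot^{-\theta/2}$, and the leading term you obtain is $\sqrt{a}\,B_9\sqrt{\log\ot}\,\log(2/\delta)\,\ot^{-\theta/2}$ rather than the stated $\sqrt{a}\,B_9\log(2/\delta)\,\ot^{-\theta/2}$.

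The paper avoids this logarithmic loss by a different device: instead of bounding $\sup_\mu$ termwise, it keeps the eigenvalue $\mu$ fixed, factors
\[
\sum_{j}\gamma_j^2 A'_j\,\mu\!\!\prod_{i=j+1}^t(1-a_i)^2
=\sum_j\Big[\gamma_jA'_j\!\!\prod_{i=j+1}^t(1-a_i)\Big]\cdot\Big[\gamma_j\mu\!\!\prod_{i=j+1}^t(1-a_i)\Big],
\]
bounds the first bracket uniformly by $\sup_j\gamma_jA'_j\prod_{i>j}(1-\gamma_i\lambda_i)$, and then uses the telescoping identity $\sum_j\gamma_j\mu\prod_{i>j}(1-a_i)=1-\prod_i(1-a_i)\le 1$, valid for every fixed~$\mu$. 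This telescoping step is precisely what your exponential bound forfeits, and is the missing idea needed to recover the clean $\ot^{-\theta/2}$ rate on the $B_9$ term. You rightly flag the transition regime as the delicate point, but the resolution is this sum-then-sup factorization rather than a sharper pointwise spectral estimate.
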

\bp
Fix $t\in\N$, $\de\in[0,1]$, and let
\begin{equation} \label{eq:Atd}
A_{t,\de}:=\ka M_\rho a\left[12at_0^{1/2-\tet}+15\sqrt{\log \ot}\right]\log\frac{2}{\de}.
\end{equation}

For all $j\in\N$, let us define the martingale difference sequence
$$X_j := \gamma_j \PPi_{j+1}^t \chi_j\1_{\{\|h_{j-1}\|_K (j+\t)^{\tet-1/2} \le A_{t,\de}\}},$$ 
where we make use of the notation of Appendix B. Recall that Corollary \ref{loght} implies, with probability at least $1-\delta$, all the indicator function events for $1\leq j <t$  hold, which will be assumed in the computation below.

Recall that
\[ \chi_j = (\A_j - A_j)w_{j-1} + b_j - \b_j = (L_K - L_j)f_{j-1} + y_j K_{x_j} - L_K f_\rho \]
where $L_j := \<\ , K_{x_j}\>K_{x_j} $.

Using Lemma \ref{ubgt} and the decomposition $f_j=f_\rho+g_j+h_j$ in Appendix B, we deduce that, for all $1\le j< t$, if 
$\|h_{j-1}\|_K (j+\t)^{\tet-1/2} \le A_{t,\de}$, 
\begin{eqnarray*}
\E_{j-1}\|\chi_j\|_K^2 &=& \E_{j-1}\|y_j K_{x_j}- L_j f_{j-1}-(L_K f_\rho-L_K f_{j-1}) \|_K^2 \leq \E_{j-1}\|y_j K_{x_j}- L_j f_{j-1}\|_K^2 \\
&\le&\E_{j-1}\|(y_j K_{x_j}- L_j f_\rho)-L_jg_{j-1}-L_j h_{j-1}\|_K^2\\
&\le&3\ka^2 [\E_{j-1}|y_j-f_\rho(x_j)|^2+\E_{j-1}|g_{j-1}(x_j)|^2+\E_{j-1}|h_{j-1}(x_j)|^2]\\
&\le&3\ka^2[4\M^2+\|g_{j-1}\|_\rho^2+\|h_{j-1}\|_\rho^2]\le3\ka^2[5\M^2+\ka^2(j+t_0)^{1-2\tet}A_{t,\de}^2]=:A'_{j,t,\de}
\end{eqnarray*}
Now, using the isometry $L_K^{1/2}:\L2\to \H_K$,
\begin{eqnarray*}
\sum_{j=1}^t\E_{j-1}\|X_j\|_\rho^2&=&\sum_{j=1}^t\E_{j-1}\|L_K^{1/2}X_j\|_K^2
= \sum_{j=1}^t\g_j^2\E_{j-1}\|L_K^{1/2} \PPi_{j+1}^t \chi_j\|_K^2\\
&\le&\sum_{j=1}^t\left(\g_j^2\|\PPi_{j+1}^tL_K\PPi_{j+1}^t\|\right)\E_{j-1}\|\chi_j\|_K^2
\le\sum_{j=1}^t\g_j^2A'_{j,t,\de}\|\PPi_{j+1}^tL_K\PPi_{j+1}^t\|
\end{eqnarray*}
In order to estimate $\sum_{j=1}^t \gamma_j^2 A'_{j,t,\de} \| \PPi_{j+1}^t L_K \PPi_{j+1}^t\|$, recall that $(\mu_\al, \phi_\al)_{\al\in \N}$ is an orthonormal eigen-system of $L_K:\L2\to \L2$.
Let $a_i = \ga_i \la_i + \ga_i \mu_\al$ for simplicity; then
\begin{eqnarray*}
&& \sum_{j=1}^t \gamma_j^2A'_{j,t,\de} \| \PPi_{j+1}^t L_K \PPi_{j+1}^t\| \leq \sup_{\mu_\al} \sum_{j=1}^t \ga_j^2A'_{j,t,\de} \mu_\al \prod_{i=j+1}^t (1-a_i)^2 \\
& = & \sup_{\mu_\al} \sum_{j=1}^t \left [\ga_jA'_{j,t,\de}  \prod_{i=j+1}^t (1-a_i) \right]  \cdot \left[\ga_j \mu_\al \prod_{i=j+1}^t (1-a_i)\right] \\
& \leq & \sup_{\mu_\al} \left\{\left [ \sup_{j}\ga_jA'_{j,t,\de}  \prod_{i=j+1}^t (1-a_i) \right] \cdot
\left[ \sum_{j=1}^t \ga_j \mu_\al \prod_{i=j+1}^t (1-a_i)\right] \right\}
\end{eqnarray*}
where for large enough $\t$,
\begin{eqnarray*}  \label{eq:sup}
&&\sup_j \ga_j A'_{j,t,\de} \prod_{i=j+1}^t (1-a_i)\leq \sup_j \ga_j A'_{j,t,\de} \prod_{i=j+1}^t (1- \ga_i \la_i) \\
&\leq& 3a \ka^2\sup_j \frac{j+\t}{\ot} \cdot\left(\frac{5\M^2}{(j+\t)^\theta}+\frac{\ka^2 A^2_{t,\de}}{(j+\t)^{3\tet-1}}\right) \\
&\leq& 3a \ka^2 \left(\frac{5\M^2}{\ot^\theta}+\frac{\ka^2 A^2_{t,\de}}{\ot^{3\tet-1}}\right),  
\end{eqnarray*}
and
\[ \sum_{j=1}^t \ga_j \mu_\al \prod_{i=j+1}^t (1-a_i) \leq  \sum_{j=1}^t (1 - (1-\ga_j \mu_\al))\prod_{i=j+1}^t (1-\ga_i \mu_\al ) =
1 - \prod_{i=1}^t(1-\ga_i \mu_\al ) \leq 1.\]
These two upper bounds give
\begin{equation} \label{ubvar}
\sum_{j=1}^t\E_{j-1}\|X_j\|_\rho^2 \leq \frac{3a\ka^2}{\ot^{\theta}}\left(5\M^2 + \frac{\ka^2 A^2_{t,\de}}{\ot^{2\tet-1}}\right) =:\sigma_t^2.
\end{equation}

Moreover, again  if 
$\|h_{j-1}\|_K (j+\t)^{\tet-1/2} \le A_{t,\de}$, then, using Lemma \ref{ubgt} (B) and Corollary \ref{loght}, we deduce
\begin{eqnarray*}
\|y_j K_{x_j}- L_j f_{j-1} \|_K  & = & \| y_j K_{x_j} - L_j (f_\rho + g_{j-1} + h_{j-1}) \|_K \\
& \le & \ka M_\rho +\frac{\ka^2M_\rho}{\sqrt{\la_j}}+\ka^2A_{t,\de}(j+t_0)^{1/2-\tet}=: C_{j,t,\delta},
\end{eqnarray*}
which implies
$$\|\chi_j\|_K=\|y_j K_{x_j}- L_j f_{j-1}-\E_j[y_j K_{x_j}- L_j f_{j-1}]\|_K\le 2C_{j,t,\delta}.$$

Therefore
\begin{eqnarray*}
\|X_j\|_\rho & \leq & \g_j \|L_K^{1/2} \PPi_{j+1}^t \chi_j\|_K \leq 2\g_j C_{j,t,\delta} \| \PPi_{j+1}^t  L_K \PPi_{j+1}^t\|_K^{1/2} \\
& \le &  2\ka \sup_j \ga_j C_{j,t,\de} \prod_{i=j+1}^t (1- \ga_i \la_i), \ \ \  \|L_K\| \leq \ka^2 \\
&\le &2 a\ka^2 \sup_j \frac{j+\t}{\ot} \cdot\left(\frac{\M}{(j+t_0)^\tet}+\frac{\ka\M\sqrt{a}}{(j+\t)^{(3\tet-1)/2}}+\frac{\ka A_{t,\de}}{(j+\t)^{2\tet-1/2}}\right) \\
&\le &2 a\ka^2\left(\frac{\M}{\ot^\tet}+\frac{\ka\M\sqrt{a}}{\ot^{(3\tet-1)/2}}+\frac{\ka A_{t,\de}}{\ot^{2\tet-1/2}}\right)\\
&\le &\frac{2\sqrt{a}\ka}{\ot^{\tet/2}}\left(\M+\ka\frac{\ka\M a+A_{t,\de}}{\ot^{\tet-1/2}}\right)=:M,
\end{eqnarray*}
where we use $\t\ge\ka^2 $ twice in the last inequality.

Combining $M$ and $\s_t$ from  (\ref{ubvar}), we obtain
\begin{eqnarray*}
2\left(\frac{M}{3}+\s_t\right)&=&\frac{2\sqrt{a}\ka}{\ot^{\tet/2}}
\left[\left(\sqrt{15}+\frac{2}{3}\right)\M+\ka\frac{(\sqrt{3}+1/3)A_{t,\de}+\ka\M a/3}{\ot^{\tet-1/2}}\right]\\
&\le&\frac{\sqrt{a}B_9}{\ot^{\tet/2}}+\left(a^{3/2}B_{10}\sqrt{\log \ot}+a^{5/2}B_{11}\right)\frac{\log (2/\de)}{\ot^{(3\tet-1)/2}},
\end{eqnarray*}
where we use that
$$B_9=10\ka\M\ge 2\ka(\sqrt{15}+2/3)\M,$$
$$B_{10}=63\ka^2\M\ge\ka^2\M[30(\sqrt{3}+1/3)+2/(3\log 2)],$$
$$B_{11}=50\ka^2\M\t^{1/2-\tet}\ge24\ka^2\M\t^{1/2-\tet}(\sqrt{3}+1/3).$$
\end{proof}

\subsection{Proof of Theorem C}
We choose $\tet=2r/(2r+1)$, $a\ge1$, $b\le1$ such that $ab=1$, and assume $t_0^\tet\ge a\ka^2+1$. 
Using Theorems, \ref{err1:approx}, \ref{err1:drift}, \ref{err1:init}, and \ref{err1:samp}
\beq
\|f_t - f_\rho \|_\rho &\leq & \Err_{init}(t) +\Err_{approx}(t) + \Err_{drift}(t) + \Err_{samp}(t)\\
&\leq&\frac{B_6}{\ot}+\left((B_7+B_8)a^{-r}+\sqrt{a}B_9\log \frac{2}{\de}\right)\left(\frac{1}{\ot}\right)^{\frac{r}{2r+1}}+\left(a^{3/2}B_{10}\sqrt{\log \ot}+a^{5/2}B_{11}\right)\frac{\left(\log (2/\de\right)^2}{\ot^{\frac{6r-1}{4r+2}}},
\eeq
This enables us to conclude, with $D_0:=2\M\t\ge B_6=\M(\t+1)$, 
$$D_1:= B_7+B_8=\frac{5r+1}{r(1+r)}\|L_K^{-r}f_\rho\|_\rho,$$
$D_2:=B_9$, $D_3:=B_{10}$, and $D_4:=B_{11}$.
\section*{Appendix A: A Probabilistic Inequality}
\renewcommand{\thesection}{A}
\setcounter{equation}{0} \setcounter{thm}{0}
\renewcommand{\thethm}{A.\arabic{thm}}
\renewcommand{\theequation}{A-\arabic{equation}}

The following result is quoted from [Theorem 3.4 in \citeNP{Pinelis94}].

\begin{lem}[Pinelis-Bennett]
Let $\xi_i$ be a martingale difference sequence in a Hilbert space. Suppose that almost surely $\|\xi_i\|\leq M$ and 
$\sum_{i=1}^t \E_{i-1} \|\xi_i\|^2 \leq \sigma^2_t$.
Then
\[ \Prob \left\{\sup_{1\le k\le t} \left\|\sum_{i=1}^k \xi_i \right\| \geq \epsilon \right\} \leq
2 \exp \left\{-\frac{\sigma^2_t}{M^2} g \left(\frac{M\epsilon}{\sigma^2_t} \right) \right\}, \]
where $g(x) = (1+x)\log(1+x)-x$ for $x>0$.
\end{lem}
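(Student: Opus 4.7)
The statement is a Hilbert-space extension of Bennett's classical exponential inequality for scalar martingales. My plan is to follow the Chernoff--Cramer route adapted to the Hilbert-valued setting, combined with Doob's maximal inequality to handle the supremum over $k\le t$.

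First, observe that $(\|S_k\|)_{k\le t}$, where $S_k := \sum_{i=1}^k \xi_i$, is a nonnegative submartingale, because the norm is convex and $(S_k)$ is a martingale (apply Jensen's inequality to conditional expectations). Consequently, for every $\lambda > 0$ the process $\exp(\lambda\|S_k\|)$ is also a nonnegative submartingale, and Doob's submartingale maximal inequality yields
$$\Prob\Bigl\{\sup_{1\le k\le t}\|S_k\|\ge \epsilon\Bigr\} \le e^{-\lambda\epsilon}\,\E\bigl[\exp(\lambda \|S_t\|)\bigr].$$
This reduces the problem to controlling the exponential moment on the right by a Bennett-type quantity.

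The core of the argument is then to establish, by induction on $k$, the Hilbert-valued MGF bound
$$\E\bigl[\exp(\lambda\|S_k\|)\bigr] \le 2\exp\!\left(\frac{\sigma_k^2}{M^2}(e^{\lambda M}-1-\lambda M)\right),$$
where $\sigma_k^2:=\sum_{i=1}^k \E_{i-1}\|\xi_i\|^2$. The one-step input is the identity
$$\|S_i\|^2 = \|S_{i-1}\|^2 + 2\langle S_{i-1},\xi_i\rangle + \|\xi_i\|^2,$$
combined with the martingale property $\E_{i-1}\langle S_{i-1},\xi_i\rangle=0$ and the scalar Taylor bound $e^x \le 1 + x + \tfrac{e^M-1-M}{M^2}x^2$ valid for $x\le M$. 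In the scalar Bennett setting one symmetrizes in $\pm\lambda$ directly, but the absence of such a sign symmetry for $\|\cdot\|$ forces the use of the even majorant $\cosh(\lambda\|S_k\|)\ge \tfrac{1}{2}\exp(\lambda\|S_k\|)$ (as in Pinelis); this is what produces the factor of $2$ in the final inequality.

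Combining the two ingredients gives, for every $\lambda>0$,
$$\Prob\Bigl\{\sup_{1\le k\le t}\|S_k\|\ge \epsilon\Bigr\} \le 2\exp\!\left(\frac{\sigma_t^2}{M^2}(e^{\lambda M}-1-\lambda M)-\lambda\epsilon\right),$$
and optimizing over $\lambda$ via the choice $\lambda = M^{-1}\log(1+M\epsilon/\sigma_t^2)$ produces the Bennett exponent $(\sigma_t^2/M^2)\,g(M\epsilon/\sigma_t^2)$ with $g(x)=(1+x)\log(1+x)-x$. The hard part is the one-step Hilbert-valued MGF estimate above: unlike in the scalar case, the $\cosh$-linearization is essential, and one must carefully control the cross-term contribution from $\langle S_{i-1},\xi_i\rangle$, which is the crux of Pinelis' original argument and the source of the extra factor $2$ over the scalar Bennett bound.
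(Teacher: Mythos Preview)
The paper does not prove this lemma; it is quoted directly from Theorem~3.4 of \cite{Pinelis94} without argument. Your outline is essentially a sketch of Pinelis' original proof: Doob's maximal inequality applied to the submartingale $\cosh(\lambda\|S_k\|)$, followed by an inductive MGF bound and the standard Chernoff optimization in $\lambda$. You correctly identify the delicate step --- controlling $\E_{i-1}[\cosh(\lambda\|S_i\|)]$ via the cross term $\langle S_{i-1},\xi_i\rangle$ so as to recover the Bennett exponent rather than the cruder Azuma one --- and you rightly attribute the factor $2$ to the $\cosh$ symmetrization. One caveat: in Pinelis' actual argument the scalar Taylor bound is not applied to $\|S_i\|$ directly but along a one-dimensional section (through the second derivative of $s\mapsto\cosh(\lambda\|S_{i-1}+s\xi_i\|)$), and making that step rigorous uses the $2$-smoothness of the Hilbert norm; your sketch is correct in spirit but would need that refinement to be a complete proof.
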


Using the lower bound $g(x)\geq \frac{x^2}{2(1+x/3)}$, one may obtain the following generalized Bernstein's inequality.

\begin{cor}[Pinelis-Bernstein]
Let $\xi_i$ be a martingale difference sequence in a Hilbert space. Suppose that almost surely $\|\xi_i\|\leq M$ and 
$\sum_{i=1}^t \E_{i-1} \|\xi_i\|^2 \leq \sigma^2_t$. Then
\begin{equation} \label{eq:bernstein}
\Prob \left\{ \sup_{1\le k\le t}\left\|\sum_{i=1}^k \xi_i \right\| \geq \epsilon \right\} \leq
2 \exp \left\{-\frac{\epsilon^2}{2(\sigma_t^2+M\epsilon/3)} \right\}.
\end{equation}
\end{cor}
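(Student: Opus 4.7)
The plan is to deduce the Pinelis--Bernstein corollary directly from the Pinelis--Bennett lemma stated immediately above, using only the elementary scalar inequality
\[
g(x) \;\geq\; \frac{x^2}{2(1+x/3)}, \qquad x > 0,
\]
where $g(x) = (1+x)\log(1+x) - x$. This is a standard one-variable calculus fact. It can be checked, for instance, by cross-multiplying and verifying that $\varphi(x) := 2(1+x/3)g(x) - x^2 \geq 0$ on $\R_+$. One notes that $\varphi(0) = 0$ and $\varphi'(0) = 0$, then differentiates once more to reduce the problem to the positivity of an explicit expression involving $\log(1+x)$ and $x/(1+x)$, which follows from $\log(1+x) \geq x/(1+x/2)$; alternatively, a direct Taylor comparison gives $g(x) = x^2/2 - x^3/6 + x^4/12 - \cdots$ and one checks the inequality term by term.

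Given this scalar inequality, I would substitute $x := M\epsilon/\sigma_t^2$ into the exponent of the Pinelis--Bennett bound to obtain
\[
\frac{\sigma_t^2}{M^2}\, g\!\left(\frac{M\epsilon}{\sigma_t^2}\right) \;\geq\; \frac{\sigma_t^2}{M^2} \cdot \frac{M^2\epsilon^2/\sigma_t^4}{2\bigl(1 + M\epsilon/(3\sigma_t^2)\bigr)} \;=\; \frac{\epsilon^2}{2(\sigma_t^2 + M\epsilon/3)}.
\]
Since this quantity appears with a minus sign inside the exponential of Pinelis--Bennett, replacing it by the smaller right-hand side enlarges the exponential, yielding precisely the bound \eqref{eq:bernstein}.

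The main (and only) obstacle is the scalar inequality for $g$; no further probabilistic input is needed beyond the Pinelis--Bennett lemma itself. Consequently the proof reduces to a one-line calculus verification followed by a direct substitution.
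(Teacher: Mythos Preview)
Your proposal is correct and matches the paper's approach exactly: the paper simply states that the corollary follows from the Pinelis--Bennett lemma ``using the lower bound $g(x)\geq \frac{x^2}{2(1+x/3)}$,'' which is precisely the substitution you carry out. No further argument is given or needed.
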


The following result will be used as a basic probabilistic inequality to derive various bounds.

\begin{prop} \label{prop:bernstein}
Let $\xi_i$ be a martingale difference sequence in a Hilbert space. Suppose that almost surely $\|\xi_i\|\leq M$ and 
$\sum_{i=1}^t \E_{i-1} \|\xi_i\|^2 \leq \sigma^2_t$.
Then the following holds with probability at least $1-\delta$ ($\delta\in (0,1)$),
\[ \sup_{1\le k\le t}\left\|\sum_{i=1}^k \xi_i \right\| \leq 2\left(\frac{M}{3} + \sigma_t\right) \log\frac{2}{\delta} .   \]
\end{prop}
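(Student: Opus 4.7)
The plan is to obtain the bound by directly inverting the Pinelis-Bernstein tail inequality (equation \eqref{eq:bernstein}) in Corollary A.2, which upper bounds the tail probability by $2\exp\{-\epsilon^2/(2(\sigma_t^2+M\epsilon/3))\}$. First I would set this upper bound equal to $\delta$, which amounts to the condition
\[
\frac{\epsilon^2}{2(\sigma_t^2+M\epsilon/3)}=\log\frac{2}{\delta}.
\]
Writing $L:=\log(2/\delta)$, this rearranges into the quadratic $\epsilon^2-(2LM/3)\,\epsilon-2L\sigma_t^2=0$, whose positive root is
\[
\epsilon_\star=\frac{LM}{3}+\sqrt{\frac{L^2M^2}{9}+2L\sigma_t^2}.
\]
By monotonicity of the tail bound in $\epsilon$, any $\epsilon\ge\epsilon_\star$ makes the tail probability at most $\delta$, so it suffices to show $\epsilon_\star\le 2(M/3+\sigma_t)L$.

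Next I would use the elementary subadditivity $\sqrt{a+b}\le\sqrt a+\sqrt b$ to break the square root:
\[
\sqrt{\tfrac{L^2M^2}{9}+2L\sigma_t^2}\;\le\;\tfrac{LM}{3}+\sqrt{2L}\,\sigma_t,
\]
which yields $\epsilon_\star\le \tfrac{2LM}{3}+\sqrt{2L}\,\sigma_t$. The final step is to upgrade $\sqrt{2L}$ to $2L$ in the $\sigma_t$ term. Since $\delta\in(0,1)$ gives $L=\log(2/\delta)>\log 2 > 1/2$, we have $2L>1$, hence $\sqrt{2L}\le 2L$. Combining these bounds yields
\[
\epsilon_\star\;\le\;\tfrac{2LM}{3}+2L\,\sigma_t\;=\;2\bigl(\tfrac{M}{3}+\sigma_t\bigr)\log\tfrac{2}{\delta},
\]
and the proposition follows since the event $\sup_{k\le t}\|\sum_{i\le k}\xi_i\|>\epsilon_\star$ has probability at most $\delta$.

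There is no real obstacle here: the argument is a purely algebraic inversion of Corollary A.2, and the only subtlety is the step $\sqrt{2L}\le 2L$, which is why the hypothesis $\delta<1$ (equivalently $L>\log 2>1/2$) is needed. Looseness from using $\sqrt{a+b}\le\sqrt a+\sqrt b$ and from bounding $\sqrt{2L}$ by $2L$ is absorbed into the overall constant $2$ appearing in the stated bound, which is convenient for the Hilbert-space martingale applications in Sections \ref{sec:hk} and \ref{sec:l2}.
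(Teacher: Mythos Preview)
Your proof is correct and follows essentially the same route as the paper: invert the Pinelis--Bernstein tail bound by solving the quadratic for $\epsilon$, split the square root via $\sqrt{a+b}\le\sqrt a+\sqrt b$, and then use $2\log(2/\delta)>1$ to replace $\sqrt{2L}$ by $2L$. The only cosmetic difference is that the paper phrases the square-root step as $\sqrt{a^2+b^2}\le a+b$, which is the same inequality.
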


\begin{proof}
Taking the right hand side of (\ref{eq:bernstein}) to be $\delta$, then we arrive at the following quadratic equation for $\epsilon$,
\[ \epsilon^2 - \frac{2 M }{3}\epsilon \log\frac{2}{\delta} - 2 \sigma_t^2 \log\frac{2}{\delta} = 0. \]
Note that $\epsilon>0$, then
\begin{eqnarray*}
\epsilon & = & \frac{1}{2} \left\{ \frac{2M}{3}   \log\frac{2}{\delta} + \sqrt{\frac{4M^2}{9}   \log^2\frac{2}{\delta} + 8 \sigma^2_t \log\frac{2}{\delta}} \right\} \\
& = & \frac{M}{3}   \log\frac{2}{\delta} + \sqrt{\left(\frac{M}{3}\right)^2   \log^2\frac{2}{\delta} + 2 \sigma^2_t \log\frac{2}{\delta}} \\
& \leq & \frac{2M}{3}   \log\frac{2}{\delta}+ \sqrt{ 2 \sigma^2_t \log\frac{2}{\delta}},
\end{eqnarray*}
where the second last step is due to $\sqrt{a^2 + b^2}\leq a + b$ ($a,b>0$) with
$$a= \frac{M}{3}\log\frac{2}{\delta},\ \ \ \mbox{and} \ \ \  b=\sqrt{2\sigma_t^2 \log\frac{2}{\delta}}. $$
We complete the proof by relaxing $\sqrt{2\sigma_t^2 \log 2/\delta} \leq 2 \sigma_t \log 2/\delta$ since $2\log 2/\delta>1$ for $\delta \in (0,1)$.
\end{proof}


\section*{Appendix B: Preliminary Upper Bounds }
\renewcommand{\thesection}{B}
\setcounter{equation}{0} \setcounter{thm}{0}
\renewcommand{\thethm}{B.\arabic{thm}}
\renewcommand{\theequation}{B-\arabic{equation}}
Appendix B is devoted to the proof of preliminary upper bounds on the online learning sequence $(f_t)_{t\in\N}$ defined in (\ref{eq:ft}), and on the regularization path $\la\mapsto f_{\la}$. We make use of the notation of Section \ref{sec:general}, in particular Section \ref{sec:online}. For simplicity  we assume $f_0:=0$; note that another choice would correspond to adding $\Pi_1^tf_0$ to $f_t$ at time $t$. We assume that the sequences $(\g_t)_{t\in\N}$ and $(\la_t)_{t\in\N}$ are chosen as in  
(\ref{choice}).

Firstly, Lemmas \ref{lem:fla} and \ref{lem:ftk} provide deterministic upper bounds. Then the rest of the Appendix aims at obtaining probabilistic bounds of $(f_t)_{t\in\N}$, based on a decomposition of  $f_t-f_\rho$ into two parts in (\ref{dec:ft}): $g_t$  is purely deterministic and is upper bounded  in $\L2$-norm in Lemma \ref{ubgt}, and  $h_t$ is studied in detail in Lemmas \ref{itub} and following. Lemma \ref{loght} yields logarithmic estimates with large probability.

\begin{lem} \label{lem:fla}For any $\lambda>0$,

(A) $\|f_\lambda\|_K \leq M_\rho/\sqrt{\lambda}$;

(B) $\|f_\lambda\|_\rho \leq M_\rho$.
\end{lem}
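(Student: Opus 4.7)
My plan is to derive both estimates from the basic variational and spectral descriptions of $f_\lambda$, namely (\ref{eq:reg}) and (\ref{expfla}).

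For (A), I would simply use the minimization property (\ref{eq:reg}) of $f_\lambda$ compared against the trivial competitor $f=0$. This gives
$$\Err(f_\lambda) + \lambda\|f_\lambda\|_K^2 \;\leq\; \Err(0) \;=\; \int_{\Z} y^2\,d\rho \;\leq\; M_\rho^2,$$
since $|y|\leq M_\rho$ almost surely by the Finiteness Condition (B). Dropping the non-negative term $\Err(f_\lambda)$ yields (A) directly.

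For (B), the naive variational comparison only gives $\|f_\lambda-f_\rho\|_\rho\leq M_\rho$, which is not what is claimed. Instead, I would appeal to the explicit formula (\ref{expfla}) together with the spectral decomposition of $L_K$ on $\L2$. Writing $f_\rho=\sum_\alpha a_\alpha\phi_\alpha$ (which lies in $\L2$ because $|f_\rho(x)|=|\E[y\mid x]|\leq M_\rho$), the identity $f_\lambda=(L_K+\lambda I)^{-1}L_K f_\rho$ becomes
$$f_\lambda = \sum_\alpha \frac{\mu_\alpha}{\mu_\alpha+\lambda}\,a_\alpha\phi_\alpha,$$
and each filter coefficient $\mu_\alpha/(\mu_\alpha+\lambda)$ lies in $[0,1)$. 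Hence
$$\|f_\lambda\|_\rho^2 \;=\; \sum_\alpha \left(\frac{\mu_\alpha}{\mu_\alpha+\lambda}\right)^{\!2} a_\alpha^2 \;\leq\; \sum_\alpha a_\alpha^2 \;=\; \|f_\rho\|_\rho^2 \;\leq\; M_\rho^2,$$
which gives (B).

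No step presents a real obstacle; the only subtlety worth flagging is that (B) cannot be extracted from the variational inequality alone, so one must invoke the low-pass filter interpretation of $(L_K+\lambda I)^{-1}L_K$ and the fact that its operator norm on $\L2$ is bounded by $1$ --- an observation entirely in the spirit of the treatment of $L_K^r$ introduced around (\ref{eq:LKr}).
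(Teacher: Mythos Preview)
Your proof is correct and follows essentially the same approach as the paper: for (A) compare $f_\lambda$ against $f=0$ in the variational definition (the paper uses the equivalent functional $\|f-f_\rho\|_\rho^2+\lambda\|f\|_K^2$, yielding the slightly sharper intermediate bound $\|f_\rho\|_\rho^2$ in place of your $\int y^2\,d\rho$, but the conclusion is identical), and for (B) use (\ref{expfla}) together with the fact that $\|(L_K+\lambda I)^{-1}L_K\|_{\L2\to\L2}\le 1$, which you spell out via the spectral decomposition and the paper states directly.
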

\begin{proof}
(A) By definition,
\[ f_\lambda = \arg \min_{f\in \H_K} \|f - f_\rho\|_\rho^2 + \lambda \|f\|^2_K. \]
The term we minimize on the right-hand side takes the value $\|f_\rho\|_\rho^2$ at $f=0$, so that 
\begin{equation} \label{eq:varbnd1}
\|f_\lambda - f_\rho\|_\rho^2 + \lambda \|f_\lambda\|^2_K \leq \|f_\rho\|_\rho^2 \leq M_\rho^2,
\end{equation}
which yields the result.

(B) Using (\ref{expfla}), 
\[ \|f_\lambda\|_\rho=\|(L_K+\la I)^{-1} L_K f_\rho \|_\rho\leq \|(L_K+\la I)^{-1} L_K\| \|f_\rho\|_\rho \leq \|f_\rho\|_\rho \leq M_\rho. \]
\end{proof}

\begin{lem} \label{lem:ftk} Assume $t_0^\tet\geq a(\ka^2+b)$. Then, for all $t\in \N$,
\[ \|f_t\|_K \leq \frac{\ka \M}{\la_t}. \]
\end{lem}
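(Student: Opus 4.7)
The proof will proceed by induction on $t\in\N$. The base case $t=0$ is immediate since $f_0=0$, so $\|f_0\|_K=0\le\kappa M_\rho/\la_0$.

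For the inductive step, I would first rewrite the recursion (\ref{eq:ft}) in operator form. Using the reproducing property, $(f_{t-1}(x_t))K_{x_t}=\langle f_{t-1},K_{x_t}\rangle_K K_{x_t}=L_t f_{t-1}$ where $L_t=\langle\cdot,K_{x_t}\rangle_K K_{x_t}$. Hence
$$f_t=(I-\g_t(L_t+\la_t I))f_{t-1}+\g_t y_t K_{x_t}.$$
By the triangle inequality and the reproducing kernel bound $\|K_{x_t}\|_K\le\kappa$ together with $|y_t|\le M_\rho$, one has $\|\g_t y_t K_{x_t}\|_K\le\g_t\kappa M_\rho$. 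On the other hand, $L_t$ is positive with $\|L_t\|\le\kappa^2$, so $A_t:=L_t+\la_t I$ satisfies $\|A_t\|\le\kappa^2+\la_t\le\amax_t$ and $\|A_t^{-1}\|\le\la_t^{-1}=\amin_t^{-1}$, and the assumption $\t^\tet\ge a(\ka^2+b)$ guarantees $\g_t\amax_t\le1$. Thus Lemma \ref{lem:onebnd}(A) applies and gives $\|I-\g_t A_t\|\le1-\g_t\la_t$.

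Combining these bounds,
$$\|f_t\|_K\le(1-\g_t\la_t)\|f_{t-1}\|_K+\g_t\kappa M_\rho.$$
Using the inductive hypothesis $\|f_{t-1}\|_K\le\kappa M_\rho/\la_{t-1}$ and the fact that $\la_t=b/\ot^{1-\tet}$ is nonincreasing in $t$, so that $1/\la_{t-1}\le1/\la_t$, we obtain
$$\|f_t\|_K\le(1-\g_t\la_t)\frac{\kappa M_\rho}{\la_t}+\g_t\la_t\frac{\kappa M_\rho}{\la_t}=\frac{\kappa M_\rho}{\la_t},$$
which completes the induction.

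There is no real obstacle here; the only subtle points are verifying that $1-\g_t\la_t\ge0$ (so that one can indeed use monotonicity of $1/\la_t$ without worrying about signs), which follows from $\g_t\la_t=ab/\ot\le ab/\t\le\t^\tet\cdot\t^{-1}\le1$ under the standing assumption $\t^\tet\ge a(\ka^2+b)\ge ab$, and noting that the monotonicity of $\la_t$ is exactly what makes the recursion close nicely on itself.
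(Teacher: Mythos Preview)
Your proof is correct and follows essentially the same approach as the paper: both derive the recursive inequality $\|f_t\|_K\le(1-\g_t\la_t)\|f_{t-1}\|_K+\g_t\ka\M$ from the operator bound $\|I-\g_tA_t\|\le1-\g_t\la_t$, and both close it using the monotonicity of $(\la_t)$. The only cosmetic difference is that the paper unrolls the recursion into the sum $\ka\M\sum_{j=1}^t\g_j\prod_{i=j+1}^t(1-\g_i\la_i)$ and bounds it via the telescoping identity $\sum_{j}\g_j\la_j\prod_{i>j}(1-\g_i\la_i)=1-\prod_i(1-\g_i\la_i)\le1$ together with $\max_j\la_j^{-1}=\la_t^{-1}$, whereas you carry out the induction step directly; your version is marginally cleaner.
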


\begin{proof}
Recall that $f_t = (I-\g_tA_t)f_{t-1}+\g_ty_tK_{x_t}$.
Now assume $t_0^\tet\geq\ka^2+1$: using (\ref{bdat}), 
\[ \|f_t \|_K \leq \|1 -\ga_tA_t\| \|f_{t-1}\|_K + \ga_t \|y_t K_{x_t}\|_K \leq (1-\ga_t \la_t) \|f_{t-1}\|_K + \ga_t \ka \M. \]
By induction on $t$, we deduce
\beq
\|f_t \|_K \leq\ka \M \sum_{j=1}^t \ga_j \prod_{i=j+1}^t (1 - \ga_i \la_i)
 \leq \max_{1\leq j\leq t}(\frac{1}{\la_j}) \sum_{j=1}^t \ga_j\la_j
\prod_{i=j+1}^t (1 - \ga_i \la_i )\leq \frac{1}{\la_t}, 
\eeq
since
$$\sum_{j=1}^t \ga_j\la_j  \prod_{i=j+1}^t (1 - \ga_i \la_i ) = 1 - \prod_{i=1}^t (1 - \ga_i \la_i ). $$
\end{proof}

In the rest of Appendix, we prove probabilistic bounds of $(f_t)_{t\in\N_0}$. First observe that the definition of the online learning sequence (\ref{eq:ft}) can be rewritten as 
$$f_t-f_{\rho}=[I-\g_t(L_t+\la_tI)](f_{t-1}-f_{\rho})+\g_t(y_tK_{x_t}-L_tf_\rho)-\g_t\la_tf_\rho.$$

Let us now define the following $(\F_t)_{t\in\N_0}$-adapted processes $(g_t)_{t\in\N_0}$ and $(h_t)_{t\in\N_0}$ recursively by
$$g_0:=-f_\rho,~~h_0:=0,$$
and 
\begin{eqnarray*}
g_t:&=&[I-\g_t(L_K+\la_tI)]g_{t-1}-\g_t\la_tf_\rho,\\
h_t:&=&[I-\g_t(L_t+\la_tI)]h_{t-1}+\g_t(y_tK_{x_t}-L_tf_\rho)+\g_t(L_K-L_t)g_{t-1}.
\end{eqnarray*}
We can easily prove by induction that 
\begin{equation}
\label{dec:ft}
f_t-f_\rho=g_t+h_t,
\end{equation}
using $f_0=0$. 
\bl
\label{ubgt}
Assume $t_0^\tet\geq a(\ka^2+b)$. Then, for all $t\in\N_0$, 

(A) $\|g_t\|_\rho\leq\ M_\rho$;

(B) $\|g_t+f_\rho\|_K\leq3M_\rho/\sqrt{\la_t}$.
\el
\bp
We prove (A) by induction: $\|g_0\|_\rho=\|f_\rho\|_\rho\leq M_\rho$ and, for all $t\in\N$, if we assume $\|g_{t-1}\|_\rho\leq M_\rho$ then, using (\ref{bdaht}), 
\begin{eqnarray*}
\|g_t\|_\rho\leq\|I-\g_t(L_K+\la_tI)]\| \|g_{t-1}\|_\rho+\g_t\la_t\|f_\rho\|_\rho
\leq(1-\g_t\la_t)\|g_{t-1}\|_\rho+\g_t\la_tM_\rho\leq M_\rho.
\end{eqnarray*}

To prove (B), observe that, for all $t\in\N$, 
\beq
g_t+f_\rho&=&[I-\g_t(L_K+\la_tI)]g_{t-1}+(1-\g_t\la_t)f_\rho
=[I-\g_t(L_K+\la_tI)](g_{t-1}+f_\rho)+\g_tL_Kf_\rho\\
&=&[I-\g_t(L_K+\la_tI)](g_{t-1}+f_\rho)+\g_t(L_K+\la_tI)f_{\la_t},
\eeq
so that
$$g_t+f_\rho-f_{\la_t}=[I-\g_t(L_K+\la_tI)](g_{t-1}+f_\rho-f_{\la_t}).$$

Let, for all $t\in\N$, 
$$w_t:=g_t+f_\rho-f_{\la_t}.$$
Then it is easy to show by induction that
$$w_t=\PPi_1^tw_0+\sum_{k=1}^{t}\PPi_k^{t}(f_{\la_k}-f_{\la_{k-1}})$$
which implies, using Theorem \ref{thm:pie1} (D) with $r=0$, and Lemma \ref{lem:fla} (A) ($w_0=-f_{\la_0}$) that
$$\|w_t\|\le2M_\rho/\sqrt{\la_t}.$$
This enables us to conclude, using again Lemma \ref{lem:fla} (A).
\ep

For all $t\in\N_0$ and $M\in\R_+\cup\{\iy\}$, let 
\beq
&\oL_t:=\1_{\{|h_{t-1}(x_t)|\leq M\}}L_t, ~~ &\tL_t:=\1_{\{|h_{t-1}(x_t)|>M\}}L_t,\\
&\oL_K:=\E_{t-1}[\oL_t],~~&\tL_K:=\E_{t-1}[\tL_t].
\eeq
Note that $L_t=\oL_t+\tL_t$ and $L_K=\oL_K+\tL_K$.

For all $t\in\N$, let 
\begin{eqnarray}
\label{lht}
\oh_t&:=&[I-\g_t(\oL_t+\la_tI)]h_{t-1}+\g_t(y_tK_{x_t}-L_tf_\rho)+\g_t(L_K-L_t)g_{t-1}=h_t+\g_t\tL_th_{t-1}\\
\label{lkt1}
k_t&:=&\oh_t-(1-\g_t\la_t)h_{t-1}=\g_t[-\oL_th_{t-1}+(y_tK_{x_t}-L_tf_\rho)+(L_K-L_t)g_{t-1}]\\
\label{lkt2}
&=&\g_t[-\oL_th_{t-1}+y_tK_{x_t}+L_Kg_{t-1}-L_t(f_\rho+g_{t-1})].
\end{eqnarray}

In Lemma \ref{itub} we upper bound $\|\oh_t\|_K^2$ in conditional expectation; note that the result still holds when $M=\iy$. We threshold $h_t$ into $\oh_t$ in order to limit its conditional variance, which will be necessary in order to obtain logarithmic estimates with large probability in Lemma \ref{loght}, using on the other hand Lemma \ref{hleoh} showing that, if $M$ is large enough, $\|h_t\|_K\leq\|\oh_t\|_K$.
\bl
\label{itub}
Assume $t_0^\theta\geq 2a(b+\ka^2)$. 
For all $M\in\R_+\cup\{\iy\}$ and $t\in\N$, 
$$\E_{t-1}[\|\oh_t\|_K^2]\leq(1-\g_t\la_t)^2\|h_{t-1}\|_K^2+3\ka^2M_\rho^2\g_t^2.$$

In particular, assume moreover that $\tet\ge1/2$, $\e:=ab-(\tet-1/2)>0$ and $t_0\ge\max(2ab,2\e,\e+(2\tet-1)/\e)$, and let  $A:=a\ka M_\rho\sqrt{3/\e}$. Then $\|h_{t-1}\|_K\ge A\ot^{1/2-\tet}$ implies 
$$\ot^{\tet-1/2}\E_{t-1}[\|\oh_t\|_K]\leq (\ot-1)^{\tet-1/2}\|h_{t-1}\|_K.$$
\el
\bp
For all $t\in\N$, let
$$\ze_t:=(\oL_K-\oL_t)h_{t-1}+(L_K-L_t)g_{t-1}+(y_tK_{x_t}-L_tf_\rho),$$
so that
$$\oh_t=[I-\g_t(\oL_K+\la_tI)]h_{t-1}+\g_t\ze_t.$$

Using $\E_{t-1}[\ze_t]=0$, we deduce that
\begin{equation}
\label{htflat}
\E_{t-1}[\|\oh_t\|_K^2]=\|[I-\g_t(\oL_K+\la_tI)]h_{t-1}\|_K^2+\g_t^2 E_{t-1}[\|\ze_t\|_K^2].
\end{equation}
Let us now upper bound the two summands in the right-hand side of equality (\ref{htflat}). First, 
\beq
&&\|[I-\g_t(\oL_K+\la_tI)]h_{t-1}\|_K^2\\
&&=(1-\g_t\la_t)^2\|h_{t-1}\|_K^2-2\g_t(1-\g_t\la_t)\E_{t-1}[|h_{t-1}(x_t)|^2\1_{\{|h_{t-1}(x_t)|\leq M\}}]\\
&&\,\,\,\,\,\,\,\,\,\,\,\,\,\,\,\,\,\,\,\,\,\,\,\,\,\,\,\,\,\,\,\,\,\,\,\,\,\,\,\,\,\,\,\,\,\,\,\,\,\,\,\,\,\,\,\,\,\,\,\,\,\,\,\,\,\,\,\,\,\,\,\,\,\,\,\,\,\,\,\,\,\,\,\,\,\,\,\,\,\,\,\,\,\,\,\,\,\,\,\,\,\,\,\,\,\,\,\,\,\,\,\,\,\,\,\,\,\,\,\,\,\,\,\,\,\,\,\,
+\g_t^2\|\E_{t-1}[\oL_th_{t-1}]\|_K^2,
\eeq
and
\beq
\|\E_{t-1}[\oL_th_{t-1}]\|_K^2&\leq&\left(\E_{t-1}[|h_{t-1}(x_t)|\1_{\{|h_{t-1}(x_t)|\leq M\}}\|K_{x_t}\|_K]\right)^2\\
&\leq&\ka^2\E_{t-1}[|h_{t-1}(x_t)|^2\1_{\{|h_{t-1}(x_t)|\leq M\}}],
\eeq
using conditional Jensen's inequality.

Second, using that $\E[y_tK_{x_t}-L_tf_\rho~|~\s(\F_{t-1},x_t)]=0$, 
\beq
\E_{t-1}[\|\ze_t\|_K^2]
&=&\E_{t-1}[\|(\oL_K-\oL_t)h_{t-1}+(L_K-L_t)g_{t-1}\|_K^2+\|y_tK_{x_t}-L_tf_\rho\|_K^2]\\
&\leq&\E_{t-1}[\|\oL_th_{t-1}+L_tg_{t-1}\|_K^2+\|y_tK_{x_t}\|_K^2]\\
&\leq&\ka^2\left(2\E_{t-1}[|h_{t-1}(x_t)|^2\1_{\{|h_{t-1}(x_t)|\leq M\}}]+2\|g_{t-1}\|_\rho^2+M_\rho^2\right)\\
&\leq&\ka^2\left(2\E_{t-1}[|h_{t-1}(x_t)|^2\1_{\{|h_{t-1}(x_t)|\leq M\}}]+3M_\rho^2\right),
\eeq
where we use Lemma \ref{ubgt} (A) in the last inequality.

In summary, we obtain that 
\beq
\E_{t-1}[\|\oh_t\|_K^2]&\leq&
(1-\g_t\la_t)^2\|h_{t-1}\|_K^2-\g_t(2-2\g_t\la_t-3\g_t\ka^2)\E_{t-1}[|h_{t-1}(x_t)|^2\1_{\{|h_{t-1}(x_t)|\leq M\}}]\\
&&+3\ka^2M_\rho^2\g_t^2.
\eeq
Now, the assumption $t_0^\theta\geq 2a(b+\ka^2)$ implies $2-2\g_t\la_t-3\g_t\ka^2\geq0$ for all $t\in\N$, which completes the proof of the first statement.

Let us now prove the second statement: 
\begin{eqnarray}
\nonumber
&&\De_t:=\Es_{t-1}\left[\left(1-\frac{1}{\ot}\right)^{1-2\tet}\|\oh_t\|^2-\|h_{t-1}\|^2\right]\\
\label{1dec}
&&\le\left(1-\frac{1}{\ot}\right)^{1-2\tet}\left(1-\frac{ab}{\ot}\right)^{2}\|h_{t-1}\|^2 - \|h_{t-1}\|^2 +3\ka^2M_\rho^2 a^2\ot^{-2\tet}.
\end{eqnarray}
Now, since $t_0\ge\max(2ab,2\e,\e+(2\tet-1)/\e)$ and $\tet\ge1/2$, we have  
\beq
\log\left[\left(1-\frac{1}{\ot}\right)^{1-2\tet}\left(1-\frac{ab}{\ot}\right)^{2}\left(1-\frac{\e}{\ot}\right)^{-1}\right]
\le-\frac{\e}{\ot}+\frac{2\tet-1+\e^2}{\ot^2}\le0.
\eeq
using $\log(1-x)\le-x$ for all $x\in[0,1]$ and $\log(1-x)\ge-x-x^2$ for all $x\in[0,1/2]$.

Therefore \eqref{1dec} implies
$$\De_t\le-\frac{\e}{\ot}\|h_{t-1}\|^2+3\ka^2M_\rho^2 a^2\ot^{-2\tet}\le0.$$
The conclusion follows by conditional Jensen's inequality.
\ep
\bl
\label{dif}
Assume $t_0^\tet\geq a(\ka^2+b)$ and $t_0^{1-\tet}\geq b(2+MM_\rho^{-1})$; then
$$\|k_t\|_K\le2\ka M_\rho ab^{-1}\ot^{1-2\tet}\tx{ and }\Es_{t-1}[\|k_t\|_K^2]\leq9\g_t^2M_\rho^2\ka^2.$$
\el
\bp
By definition \eqref{lkt2}, using $\|K_{x_t}\|_K\le\ka$, $\|L_Kf_\rho\|_K\leq\ka\|L_K^{1/2}f_\rho\|_K=\ka\|f_\rho\|_\rho\leq\ka M_\rho$ and Lemma \ref{ubgt} (A)-(B), we deduce
$$\|k_t\|_K\le\g_t\left[\ka(M+2M_\rho)+\|L_t(f_\rho+g_{t-1})\|_K\right]\le\ka\g_t\left(M+2M_\rho+\frac{M_\rho}{\la_t}\right)\le
\frac{2\ka\g_t M_\rho}{\la_t}=\frac{2\ka M_\rho ab^{-1}}{\ot^{2\tet-1}},$$
where we use $t_0^{1-\tet}\geq b(2+MM_\rho^{-1})$ in the last inequality.

Now, using \eqref{lkt1}, we obtain
\beq
\Es_{t-1}[\|k_t\|_K^2]&\le&3\g_t^2 \left[\Es_{t-1}[\|\oL_th_{t-1}\|_K^2]+\Es_{t-1}[\|y_tK_{x_t}\|_K^2]+\Es_{t-1}[\|L_tg_{t-1}\|_K^2]\right]\\
&\le&3\g_t^2 [2M_\rho^2\ka^2+\|g_{t-1}\|_\rho\ka^2]\le9
\g_t^2 M_\rho^2\ka^2.
\eeq
\ep
\bl
\label{hleoh}
For all $t\in\N$, assume $M\ge M_t:=4\ka M_\rho ab^{-1}\ot^{1-2\tet}$, $t_0^\tet\geq 2a(\ka^2+b)$ and $t_0^{1-\tet}\geq b(2+MM_\rho^{-1})$; then
$$\|h_t\|_K\leq\|\oh_t\|_K.$$
\el
\bp
Assume $h_{t-1}(x_t)\ge M_t$ for instance; the other case is similar. By definition, 
$$h_t=\oh_t-\g_th_{t-1}(x_t)K_{x_t}$$
so that 
$$\|h_t\|_K^2=\|\oh_t\|_K^2-2\g_th_{t-1}(x_t)\oh_t(x_t)+\g_t^2(h_{t-1}(x_t))^2K(x_t,x_t)\le \|\oh_t\|_K^2$$
if $\oh_t(x_t)\ge\ka^2\g_th_{t-1}(x_t)/2$.

But, using Lemma \ref{dif},
\beq
\oh_t(x_t)=(1-\g_t\la_t)h_{t-1}(x_t)+k_t (x_t) 
\ge(1-\g_t\la_t)h_{t-1}(x_t)-2\ka M_\rho ab^{-1}\ot^{1-2\tet}\ge\ka^2\g_th_{t-1}(x_t)/2
\eeq
if 
$$2\ka M_\rho ab^{-1}\ot^{1-2\tet}\le h_{t-1}(x_t)/2\le h_{t-1}(x_t)(1-\g_t\la_t-\ka^2\g_t/2),$$
since the assumption $t_0^\tet\geq 2a(\ka^2+b)$ implies $1-\g_t\la_t-\ka^2\g_t/2\ge1/2$.
\ep
The following logarithmic upper bound holds under the assumptions $ab-(\tet-1/2)>0$, $\tet\in[1/2,1]$ and $t_0$ sufficiently large, but we assume $b=a^{-1}$ in its statement, for notational reasons. 
\begin{cor}
\label{loght}
Assume $\tet\in[1/2,1]$, $b=a^{-1}$, $t_0^\tet\ge2+8\ka^{2}a$ and $t_0^{1-\tet}\ge4b$. Then,  with probability at least $1-\de$, 
$$\sup_{0\le k\le t}\|h_{k}\|_K(k+t_0+1)^{\tet-1/2}\le\ka M_\rho a\left[12at_0^{1/2-\tet}+15\sqrt{\log \ot}\right]\log\frac{2}{\de}.$$
\end{cor}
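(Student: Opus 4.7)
The plan is a bootstrap argument that combines Lemma~\ref{hleoh} (the pathwise truncation bound $\|h_s\|_K \le \|\oh_s\|_K$), the increment estimates of Lemmas~\ref{ubgt}--\ref{dif}, and the Pinelis--Bernstein supremum inequality (Proposition~\ref{prop:bernstein}).

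First, I would fix the truncation threshold $M := 4\ka\M a b^{-1}(1+\t)^{1-2\tet}$, which dominates the $M_s$ of Lemma~\ref{hleoh} for all $s \le t$ (since $\tet \ge 1/2$) and so yields $\|h_s\|_K \le \|\oh_s\|_K$ pathwise. Restricting attention to the good event $E$ on which $|h_{s-1}(x_s)| \le M$ for every $s \le t$, the truncation inside $\oh_s$ becomes vacuous ($\oL_s = L_s$ and $\oh_s = h_s$), and the definition of $\oh_s$ unfolds into the recursion
$$h_s = (I - \g_s(L_K + \la_s I))h_{s-1} + \g_s \tilde\chi_s,\qquad \tilde\chi_s := (L_K - L_s)(h_{s-1} + g_{s-1}) + (y_s K_{x_s} - L_s f_\rho),$$
where $\tilde\chi_s$ is an $\F_{s-1}$-martingale difference. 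Iterating from $h_0 = 0$ gives $h_k = \sum_{j=1}^k \g_j\,\PPi_{j+1}^k\,\tilde\chi_j$ with $\PPi_{j+1}^k := \prod_{i=j+1}^k(I - \g_i(L_K + \la_i I))$, and Lemma~\ref{lem:onebnd} together with $ab = 1$ (hence $\g_i\la_i = 1/(i+\t)$) yields the operator-norm bound $\|\PPi_{j+1}^k\|_{\H_K} \le (j+\t)/(k+\t)$.

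Second, to circumvent the $k$-dependence of the weights $\PPi_{j+1}^k$, I would introduce the auxiliary martingale $T_\ell := \sum_{j=1}^\ell \g_j (j+\t)^{\tet-1/2}\tilde\chi_j$ and reduce $\sup_{k\le t}\|h_k\|_K(k+\t+1)^{\tet-1/2}$ to $\sup_{\ell\le t}\|T_\ell\|_K$ by summation by parts, using the factorization $(j+\t)/(k+\t) = (j+\t)^{\tet-1/2}\cdot(j+\t)^{3/2-\tet}/(k+\t)$. The structure of $\tilde\chi_j$ mirrors that of $k_j$, so a direct adaptation of Lemma~\ref{dif} yields the uniform pathwise bound $\|\g_j(j+\t)^{\tet-1/2}\tilde\chi_j\|_K \le 2\ka\M a^2\t^{1/2-\tet}$ (using that $j \mapsto (j+\t)^{1/2-\tet}$ is nonincreasing when $\tet \ge 1/2$), while the conditional second moments are of order $O(a^2\ka^2\M^2/(j+\t))$, whose sum is $\s_t^2 = O(a^2\ka^2\M^2\log\ot)$. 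Proposition~\ref{prop:bernstein} then produces, with probability at least $1-\de$, a bound of order $\ka\M a(a\t^{1/2-\tet} + \sqrt{\log\ot})\log(2/\de)$ on $\sup_\ell\|T_\ell\|_K$, which matches the stated constants after the summation-by-parts step.

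Finally, the high-probability estimate above forces $\ka\|h_{k-1}\|_K \le M$ for all $k \le t$, validating the good event $E$ with probability at least $1-\de$ and closing the bootstrap. The main technical obstacle is the $k$-dependence of the operator weights $\PPi_{j+1}^k$; handling it rigorously calls for either the summation-by-parts device outlined above or, alternatively, a stopping-time argument that interprets Lemma~\ref{itub} as a supermartingale-above-threshold property for the scaled process $(t+\t)^{\tet-1/2}\|\oh_t\|_K$, combined with a separate Pinelis--Bernstein bound for its below-threshold fluctuations.
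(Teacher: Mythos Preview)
Your primary route via the auxiliary martingale $T_\ell$ and summation by parts has a genuine gap. The factorization $(j+\t)/(k+\t) = (j+\t)^{\tet-1/2}\cdot(j+\t)^{3/2-\tet}/(k+\t)$ is a scalar identity, but the weight acting on $\tilde\chi_j$ in $h_k=\sum_{j=1}^k\gamma_j\PPi_{j+1}^k\tilde\chi_j$ is the \emph{operator} $\PPi_{j+1}^k$, for which $(j+\t)/(k+\t)$ is only a norm upper bound. Replacing $\PPi_{j+1}^k$ by that scalar yields an inequality in $\|\tilde\chi_j\|_K$, which is no longer a martingale difference; performing Abel summation with the genuine operator weights $W_j=(k+\t)^{\tet-1/2}(j+\t)^{1/2-\tet}\PPi_{j+1}^k$ requires $\sum_j\|W_{j+1}-W_j\|$ to be bounded uniformly in $k$, and a spectral calculation shows this total variation is at best $O(\log k)$ (the $L_K$ part of $\A_{j+1}$ contributes $\gamma_{j+1}\|\PPi_{j+2}^kL_K\|$, whose sum near $j=k$ diverges logarithmically). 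There is also a circularity in the bootstrap: your ``uniform pathwise bound'' on $\gamma_j(j+\t)^{\tet-1/2}\tilde\chi_j$ does \emph{not} follow from Lemma~\ref{dif}, since $k_t$ contains the truncated $\bar L_th_{t-1}$ whereas $\tilde\chi_j$ contains the untruncated $L_jh_{j-1}$, whose norm is only controlled on the good event $E$ --- so Pinelis--Bernstein for $T_\ell$ already presupposes $E$, and the closing step is circular (and your fixed $M$ cannot absorb the $\sqrt{\log\ot}$ growth anyway).

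The paper follows precisely the alternative you mention in your last sentence. It works with the \emph{scalar} process $x_k=\|h_k\|_K(k+\t+1)^{\tet-1/2}$, uses the truncation $\bar h$ so that the increments $\eps_k=\|\bar h_k\|_K-\E_{k-1}\|\bar h_k\|_K$ are bounded \emph{a priori} via Lemma~\ref{dif} (this is exactly why the truncation $\bar L_t$ is introduced), invokes the second statement of Lemma~\ref{itub} as a contraction-above-threshold inequality, applies Pinelis--Bernstein to the scalar martingale $\eta_i=\sum_{k\le i}\eps_k(k+\t)^{\tet-1/2}\1_{\{\|h_{k-1}\|_K\ge A(k+\t)^{1/2-\tet}\}}$, and finishes with a last-time-below-threshold induction on $x_k$. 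This avoids both difficulties: no operator-valued Abel summation is needed, and there is no circular bootstrap, since the truncation makes the martingale increments bounded independently of how large $\|h_{k-1}\|_K$ may be.
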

\bp
Let us first check that the assumptions of Lemmas \ref{itub}, \ref{dif} and \ref{hleoh} are satisfied, and apply these lemmas: $t_0^\tet\ge3+8\ka^{2}a\ge 2a(\ka^2+b)$. Now $\e=ab-(\tet-1/2)\in[1/2,1]$, and the hypothesis $t_0\ge\max(2ab,2\e,\e+(2\tet-1)/\e)$ is satisfied as long $t_0\ge3$, which is assumed here. 
We choose $M=M_t=4\ka M_\rho ab^{-1}\ot^{1-2\tet}$; now $t_0^\tet\ge8\ka a$ and $t_0^{1-\tet}\ge4b$ imply $t_0^{1-\tet}\ge b(2+4\ka ab^{-1} t_0^{1-2\tet})\ge b(2+M_tM_\rho^{-1})$. 

For all $i\in\N$, if $\|h_{i-1}\|_K\ge A(i+t_0)^{1/2-\tet}$, $A:=a\ka M_\rho\sqrt{3/\e}$, then
\begin{eqnarray*}
\|h_i\|_K & \le & \|\bar{h}_i\|_K,  \ \ \ \ \ \ \ \ \ \ \ \ \ \ \ \ \ \ \ \ \ \ \ \ \ \  \ \ \ \ \ \ \ \ \ \  \ \ (\text{Lemma \ref{hleoh}} )\\
& \le & \|\bar{h}_i\|_K + \Es_{i-1}(\|\bar{h}_i\|_K) - \Es_{i-1} (\|\bar{h}_i\|_K) \\ 
& \le & \left(1-\frac{1}{i+t_0}\right)^{\tet-1/2}\|h_{i-1}\|_K+\e_i, \ \ \ \ \  (\text{Lemma \ref{itub}} )\\
\end{eqnarray*}
where
$$\eps_i:=\|\oh_i\|_K-\Es_{i-1}(\|\oh_i\|_K)$$
satisfies
$$\|\e_i\|_K\le4\ka M_\rho ab^{-1}(i+t_0)^{1-2\tet}\tx{ and }\Es_{i-1}[\|\e_i\|^2]\leq9\g_i^2M_\rho^2\ka^2.$$
Let, for all $i\in\N$, 
$$\eta_i:=\sum_{k=1}^i\e_k(k+t_0)^{\tet-1/2}\1_{\{\|h_{k-1}\|_K\ge A(k+t_0)^{1/2-\tet}\}}.$$

Fix $t\in\N$. For all $0\le i<t$, $\|\eta_{i+1}-\eta_i\|\le4\ka M_\rho a^2t_0^{1/2-\tet}$, and 
$$\sum_{k=1}^t\Es_{k-1}\|\eta_k\|^2\le9\ka^2 M_\rho^2a^2\sum_{k=1}^t(k+t_0)^{-1}\le9\ka^2 M_\rho^2a^2\log(1+t/t_0).$$
Let $$\De:=\left\{\sup_{1\le i\le t}\left\|\eta_i\right\|\le2\ka M_\rho a\left[\frac{4at_0^{1/2-\tet}}{3}+3\sqrt{\log\left(1+\frac{t}{t_0}\right)}\right]\log\frac{2}{\de}\right\}.$$
By Proposition \ref{prop:bernstein}, $P(\De)\ge1-\de$. 

Now assume $\De$ holds. Let, for all $k\in\N$, $$x_k:=\|h_{k}\|_K(k+t_0+1)^{\tet-1/2}.$$

For all $k\le t$, let
$$m:=\max\{j\le k\,:\, \|h_j\|_K<A(j+t_0+1)^{1/2-\tet}\}.$$
If $m<k$, then 
\beq
x_{m+1}&\le&[A(m+t_0+1)^{1/2-\tet}+2\ka M_\rho a^{2}(m+t_0+1)^{1-2\tet}](m+t_0+2)^{\tet-1/2}\\
&\le&\frac{\sqrt{5}}{2}[A+2\ka M_\rho a^{2}t_0^{1/2-\tet}]\le\frac{\sqrt{5}}{2}[\sqrt{6}a\ka M_\rho+2\ka M_\rho a^{2}t_0^{1/2-\tet}];
\eeq
the second inequality comes from $[(m+t_0+2)/(m+t_0+1)]^{\tet-1/2}\le\sqrt{5/4}$, since $t_0\ge3$.

On the other hand it is easy to prove by induction that, for all $k\le t$, 
$$x_k\le x_{m+1}+\eta_k-\eta_{m+1}$$
and, therefore,
\beq
x_k
&\le&\ka M_\rho a\left[\left(\sqrt{5}+\frac{16}{3}\right)at_0^{1/2-\tet}+\frac{\sqrt{30}}{2}+12\sqrt{\log\left(1+\frac{t}{t_0}\right)}\right]\log\frac{2}{\de}\\
&\le&12\ka M_\rho a\left[at_0^{1/2-\tet}+\frac{1}{4}+\sqrt{\log\left(1+\frac{k}{t_0}\right)}\right]\log\frac{2}{\de}\\
&\le&\ka M_\rho a\left[12at_0^{1/2-\tet}+15\sqrt{\log t}\right]\log\frac{2}{\de},
\eeq
using in the last inequality that,  for all $t\ge1$ and $t_0\ge2$, 
$$\frac{1}{4}+\sqrt{\log(1+t/t_0)}\le\frac{1}{4}+\sqrt{\log (t+t_0)}\le\frac{5}{4}\sqrt{\log (t+t_0)}.$$
\ep


\section*{Appendix C: Proof of Results of Section \ref{sec:thmconv}}
\renewcommand{\thesection}{C}
\setcounter{equation}{0} \setcounter{thm}{0}
\renewcommand{\thethm}{C.\arabic{thm}}
\renewcommand{\theequation}{C-\arabic{equation}}
\begin{proof}[Proof of Lemma \ref{lem:onebnd}]
Assume $t\ge j_0$. The spectral Theorem for compact operators implies that there is an orthonormal basis of $\W$ consisting of eigenvectors of $A_t$, so that, if $(\al_{t,k})_{k\in\N}$ are the eigenvalues of $A_t$, then
$$\|A_t^{-1}\|^{-1}=\min_{k\ge1}\al_{t,k}\ge\amin_t,~~\|I-\g_tA_t\|=\max_{k\ge1}(1-\g_t\al_{t,k})\ge0,$$
where we use that, for all $k\in\N$, $\g_t\al_{t,k}\le\g_t\amax_t\le1$. 

But $\min_{k\ge1}\al_{t,k}\ge\amin_t$ implies 
$\max_{k\ge1}(1-\g_t\al_{t,k})\le1-\g_t\amin_t$, thus $(A)$. 

The last claim follows from the inequality
$$\prod_{i=j}^t\left(1-\frac{c}{i+\t}\right)\leq\exp\left(-\sum_{i=j}^t\frac{c}{i+\t}\right)\leq
\exp\left(-c\log\left(\frac{\ot+1}{j+\t}\right)\right)=\left(\frac{j+\t}{\ot+1}\right)^c.$$

\end{proof}

\begin{proof}[Proof of Theorem \ref{thm:convergence}]
First, $\g_t\amax_t\to0$ implies that there exists $j_0\in\N$ such that $\g_t\amax_t\le1$ for all $t\ge j_0$.  Hence Lemma \ref{lem:onebnd} $(B)$ applies, so that 
\begin{equation}
\label{ubpr}
\|\Pi_j^t\|\le\prod_{i=j}^t(1-\g_i\amin_i).
\end{equation}

Let us use the reversed martingale decomposition of $r_.$, from times $j_0$ to $t$:
$$\|r_{t}\|\le\Err_{init}(t)+\Err_{samp}(t)+\Err_{drift}(t),$$
where
$$\Err_{init}(t):=\|\Pi_{j_0+1}^t r_{j_0}\|,~~\Err_{samp}(t):=\|\sum_{j=j_0+1}^t \gamma_j \Pi_{j+1}^t (A_j \w_j - b_j)\|,~~
\Err_{drift}(t):=\|\sum_{j=j_0+1}^t \Pi_{j}^t \Delta_j\|.$$
Now, by (\ref{ubpr}), 
$$\E(\Err_{init}(t)^2)\le\exp\left(-2\sum_{i=j_0+1}^t\g_i\amin_i\right)\E(\|r_{j_0}\|^2)\to_{t\to\iy}0$$
since $\sum_{t} \gamma_t \amin_t = \infty$, and
$$\Err_{drift}(t)\le\sum_{j=j_0+1}^t\|\De_j\|\prod_{i=j}^n(1-\g_i\amin_i)\to_{t\to\iy}0$$
by assumption $(C')$. Now consider the sample error. Using the independence of $(z_t)_{t\in\N}$, 
\begin{eqnarray*}
\E(\Err_{samp}(t)^2)&&=\E \left\| \sum_{j=j_0+1}^t \gamma_j \Pi_{j+1}^t (A_j \w_j - b_j) \right \|^2 = \sum_{j=j_0+1}^t \ga_j^2 \E\|\Pi_{j+1}^t (A_j \w_j - b_j)\|^2\\
&&\leq C \sum_{j=j_0+1}^t \ga_j^2 \prod_{i=j+1}^t (1 -\g_i\amin_i)^2,
\end{eqnarray*}
where 
$C:=\sup_{t\in\N}\E \|A_t \w_t - b_t \|^2<\iy$ by assumption. This completes the proof, using $(C')$. 
\end{proof}

\begin{proof}[Proof of Lemma \ref{lem:sums}]
Let $\e>0$. The assumptions $\limsup_{t\to\iy} a_t/b_t=0$ and $b_t\to_{t\to\iy}0$ imply that there exists $N\in\N$ such that $a_t\le\e b_t/2$ and $b_t\le1$ for all $t>N$. On the other hand, $\sum_{t\in\N}b_t=\iy$ implies that there exists $N_1\in\N$ such that, for all $n\ge N_1$, 
$$\sum_{k=1}^N a_k\prod_{i=k+1}^n(1-b_i)<\frac{\e}{2}.$$
Now
$$\sum_{k=N+1}^n a_k\prod_{i=k+1}^n(1-b_i)\le\frac{\e}{2}
\sum_{k=N+1}^n b_k\prod_{i=k+1}^n(1-b_i),$$
and we can write the right-hand side of this last inequality as a telescopic sum, i.e.
$$\sum_{k=N+1}^n b_k\prod_{i=k+1}^n(1-b_i)=
\sum_{k=N+1}^n [1-(1-b_k)]\prod_{i=k+1}^n(1-b_i)
=1-\prod_{i=N+1}^n(1-b_i)\le1,$$
which enables us to conclude.
\end{proof}

\bibliographystyle{chicagoc}

\end{document}